\documentclass[12pt]{iopart}
\usepackage{amsthm,amssymb,amsmath}
\usepackage[active]{srcltx}
\newtheorem{theorem}{Theorem}
\newtheorem{lemma}[theorem]{Lemma}
\newtheorem{corollary}[theorem]{Corollary}
\newtheorem{remark}[theorem]{Remark}
\def\tfrac#1#2{{\textstyle\frac{#1}{#2}}} 
\def\be#1{\begin{equation}\label{#1}}
\def\ee{\end{equation}}

\def\udag{u^\dagger}
\def\usol{{u_*}}
\def\fdel{f^\delta}
\def\norm#1{\hspace*{0.2ex}\|#1\|}
\def\normE#1{\hspace*{0.2ex}\|#1\|_E}
\def\normEd#1{\hspace*{0.2ex}\|#1\|_{E^*}}
\def\normF#1{\hspace*{0.2ex}\|#1\|_F}
\def\normFd#1{\hspace*{0.2ex}\|#1\|_{F^*}}
\def\dupE#1{\langle #1\rangle_{E^*,E}}
\def\dupF#1{\langle #1\rangle_{F^*,F}}
\def\dupZn#1{\langle #1\rangle_{Z_n^*,Z_n}}
\def\N{\mathbb{N}}
\def\R{\mathbb{R}}
\usepackage{color}
\numberwithin{equation}{section}
\numberwithin{theorem}{section}

\begin{document}

\title{Regularization by Discretization in Banach Spaces}
\author{Uno H\"amarik$^1$, Barbara Kaltenbacher$^2$, Urve Kangro$^1$, Elena Resmerita$^2$}
\address{$^1$ Institute of  Mathematics, University of Tartu, Estonia, 
$^2$ Institute of Mathematics, Alpen-Adria-Universit\"at Klagenfurt, Austria}
\ead{uno.hamarik@ut.ee, barbara.kaltenbacher@aau.at, urve.kangro@ut.ee, elena.resmerita@aau.at}

\begin{abstract}

We consider ill-posed linear operator equations with operators acting between  Banach spaces. For solution approximation, the methods of choice here are  projection methods onto finite dimensional subspaces, thus extending existing results from Hilbert space settings. More precisely, general projection methods, the least squares method and the least error method are analyzed. In order to appropriately choose the dimension of the subspace, we consider a priori and a posteriori choices by the discrepancy principle and by the monotone error rule. Analytical considerations and numerical tests are provided for a collocation method applied to a Volterra integral equation in one dimension space. 
\end{abstract}
\pacs{02.30.Zz, 02.30.Rz, 02.60.Cb}
\submitto{\IP}
\maketitle


\section{Introduction}
Consider an ill-posed linear operator equation
\be{Auf}
Au=f
\ee
with $A\in L(E,F)$ mapping between nontrivial Banach spaces $E$ and $F$. 
In practice only noisy data $f^\delta$ will be given. We  assume here that the noise level $\delta$ satisfying
\be{delta}
\normF{\fdel-f}\leq \delta
\ee
is known and consider convergence of regularized solutions to an exact solution $\usol$ of \eqref{Auf} as $\delta$ goes to zero. 

Regularization by projection onto finite dimensional subspaces of $E$ and/or $F$ has been studied in detail e.g., in \cite{EnNe87b,GHK14,GrNe88,HAG02,MS,Ri,VH85} in the Hilbert space setting. Here the dimension of the projection spaces plays the role of a regularization parameter. The error estimates of \cite
{EnNe87b,Natt77,Ri} allow for an a priori choice of this  dimension, in \cite{GHK14,HAG02,MS,VH85} also an a posteriori choice of the dimension is considered. Our aim is to extend these results (or at least part of them) to the general Banach space setting. This is motivated, e.g., by the use of $L^p$ spaces with $p\not=2$ to recover sparse solutions or to model uniform or impulsive noise.
 Also the space $C(\bar{\Omega})$ of continuous functions on some domain $\Omega$ and its dual $\mathcal{M}(\Omega)$ are of particular interest since our setting  allows then to analyze, e.g. collocation of integral equations as a regularization method.
Note that some results regarding regularization by discretization in Banach spaces are known in a general setting (see \cite{Natt77} and \cite{SchRieSch12}) and  about the quadrature formulae method (see \cite{Apartsyn03,BrPrVa96}), the collocation method (see, e.g.,  \cite
{BrPrVa96,Eggerm83,Eggerm84,Natt77}) and the Galerkin method (see \cite{BrPrVa96}).

Let  $E_n\subseteq E$, $Z_n\subseteq F^*$, $n\in\N$, be finite dimensional nontrivial subspaces  which have the role of approximating the spaces $E$ and $F^*$, respectively. For instance, the subspaces can be chosen in the following manner, as it will be emphasized later,
\be{Espace}
\forall n\in\N, \, : \ E_n\subseteq E_{n+1} \mbox{ and } \overline{\bigcup_{n\in\N} E_n} = E,
\ee
\be{Fspace}
\forall n\in\N,\,: \ Z_n\subseteq Z_{n+1} \mbox{ and } \overline{\bigcup_{n\in\N} Z_n} = F^*.
\ee
The general projection method defines a finite dimensional approximation $u_n$ to $\usol$ by 
\be{projmeth}
u_n\in E_n \mbox{ and } \forall z_n\in Z_n \, : \ \dupF{z_n,A u_n}=\dupF{z_n,\fdel}. 
\ee
As in the Hilbert space case, the least squares method
\be{leastsquares}
u_n\in\mbox{argmin}\{ \normF{A \tilde{u}_n-\fdel}\, : \, \tilde{u}_n\in E_n\}
\ee
and the least error method
\be{leasterror}
u_n\in\mbox{argmin}\{ \normE{\tilde{u}} \, : \, \forall z_n\in Z_n \, : \ \dupF{z_n,A \tilde{u}}=\dupF{z_n,\fdel}\}
\ee
 can be recovered to some extent as special cases of \eqref{projmeth}, see Lemmas \ref{characterization_leastsquares}, \ref{characterization_leasterror} below.

A justification of the name ''least error'' method will be provided later (see Theorem \ref{rem_leasterror} in Section \ref{sec_leasterror}).

In the following, $P_n:E\to E_n $ denotes some projection. 
For drawing certain conclusions, this will sometimes be assumed 
to have the following properties:
\be{P_n}
(I-P_n)^2=I-P_n
{
\mbox{ and }\forall u\in X\,,\ \lambda\in\R\, : \|(I-P_n)(\lambda x)\|=|\lambda|\,\|(I-P_n)(x)\|}. 
\ee
As opposed to the Hilbert space setting, $P_n$ is not necessarily linear any more.
\begin{remark}  i) One can use  the metric projection operator 
\[ P_n:E\to E_n \quad P_n (w)=\mbox{argmin}\{\normE{w-w_n}\, : \, w_n\in E_n\},\] 
in case it is single valued (as happens in strictly convex Banach spaces),  but it can also be some differently defined projection operator, 
for an example see Section \ref{sec_applic} below.
Note that the metric projection $P_n$ is obviously homogeneous, idempotent and   does fulfill $(I-P_n)^2=I-P_n$, as one can see in what follows: $(I-P_n)^2(u)=(I-P_n)(u)$ if and only if
\[
u-P_n(u)-P_n(u-P_n(u))=u-P_n(u),
\]
which is equivalent to $P_n(u-P_n(u))=0$. Indeed,
\[
\normE{u-P_n(u)-0}\leq\normE{u-(P_n(u)+v_n)}=\normE{u-P_n(u)-v_n},
\]
for all $v_n\in E_n$, as $P_n(u)+v_n\in E_n$. 

ii) In general, single valued metric projections onto finite dimensional subspaces of a Banach space $X$ are  nonlinear, otherwise $X$ would be linearly isometric to an inner product space, cf., e.g., \cite[p. 210]{Haz90}.
\end{remark}

Let $Q_n$  be the linear operator  defined by 
\[ Q_n: F\to Z_n^* \quad \forall g\in F\,, z_n\in Z_n \, : \ \dupZn{Q_n g,z_n}=\dupF{z_n,g}\]
which allows to write \eqref{projmeth} as
\be{projmethQn}
u_n\in E_n \mbox{ and } Q_nA u_n=Q_n\fdel.
\ee
The norm of $Q_n$ equals one since
\begin{equation}\label{Qn1}
\begin{aligned}
\|Q_n\|=&\sup_{g\in F, \normF{g}=1}\|Q_ng\|_{Z_n^*}
 =\sup_{g\in F, \normF{g}=1, z_n\in Z_n, \normFd{z_n}=1}\dupZn{Q_n g,z_n}\\
 =&\sup_{g\in F, \normF{g}=1, z_n\in Z_n, \normFd{z_n}=1}\dupF{z_n,g}
 =1.
\end{aligned}
\end{equation}
Moreover,  $Q_n'$ will stand for  the metric projection onto the subspace $AE_n$ (or a single valued choice of the metric projection in case it is multivalued), whenever $E_n$ is a linear subspace of $E$, so that \eqref{leastsquares} can be rewritten as
\[ 
Au_n = Q_n'\fdel.
\]
In the Hilbert space setting, the least squares and the least error method can be shown to be special cases of the general projection method \eqref{projmeth} upon appropriate choice of the spaces $E_n$ and $Z_n$, respectively.
This can be extended to the Banach spaces under certain conditions. For this purpose we will make use of duality mappings 
\be{dualitymapping}
\begin{aligned}
&J_q^{F\to F^*}=\partial (\tfrac{1}{q}\normF{\cdot}^q)=\partial \Phi_F\,, \\
&J_{q^*}^{F^*\to F^{**}}=\partial (\tfrac{1}{q^*}\normFd{\cdot}^{q^*})\,,\  q^*=\tfrac{q}{q-1}\,, \\
&J_q^{E\to E^*}=\partial(\tfrac{1}{q}\normE{\cdot}^q)=\partial \Phi_E\,,
\end{aligned}
\ee
cf., e.g., \cite[Chapters I-II]{Cioranescu90}.

Moreover, we will make use of the Bregman distance induced by the functional $\Phi_E=\tfrac{1}{q}\normE{\cdot}^q$, which in case of single valued duality mapping $J_q^{E\to E^*}$ is defined by
\be{Bregmandistance}
D_q(\tilde{u},u)=\tfrac{1}{q}\normE{\tilde{u}}^q-\tfrac{1}{q}\normE{u}^q + \dupE{J_q^{E\to E^*}(u),u-\tilde{u}}\,.
\ee
We will also use the symmetric Bregman distance 
\be{symmBregmandistance}
D_q^{sym}(\tilde{u},u)=D_q(\tilde{u},u)+D_q(u,\tilde{u})=\dupE{J_q^{E\to E^*}(u)-J_q^{E\to E^*}(\tilde{u}),u-\tilde{u}}
\ee
and the identity
\be{idBregdist}
D_{q^*}(J_q^{E\to E^*}(u),J_q^{E\to E^*}(\tilde{u}))
=D_q(\tilde{u},u)
\ee
provided that
\[ \forall u\in E\, : \ J_{q^*}^{E^*\to E^{**}}(J_q^{E\to E^*}(u))=u \]
holds cf. \cite[Lemma 2.63]{SKHK12}.
Note that in smooth and uniformly convex spaces (such as $L^p$ with $p\in(1,\infty)$), convergence with respect to the Bregman distance implies convergence with respect to the norm and vice versa, cf., e.g. \cite[Theorem 2.60]{SKHK12}
\be{equivalence}
D_q(\usol,u^k)\stackrel{k\to\infty}{\to}0 \ \Leftrightarrow \ D_q(u^k,\usol)\stackrel{k\to\infty}{\to}0 \ \Leftrightarrow \ \normE{u^k-\usol}\stackrel{k\to\infty}{\to}0.
\ee


While tools like the Bregman distance have only relatively recently been applied in the context of regularization, some of the fundamental concepts we use are still those from the seminal papers \cite{Natt77,VH85}.
In  \cite{Natt77}, which partly also works with general Banach spaces, error estimates for the general projection method \eqref{projmeth}
rely on the norms of the linear operator $B_n:F\to E_n$ mapping $\fdel$ to a solution of \eqref{projmeth}, as well as the special projection $\tilde{P}_n:E\to E_n$, $\tilde{P}_nu=B_nAu$. Note that well-definedness of these operators can be shown under certain conditions, see, e.g., \eqref{dim}, \eqref{nsp} below.
As a matter of fact, it is readily checked that for $u_n$ defined by \eqref{projmeth}, the error estimate
\[
\|u_*-u_n\|\leq (1+\|\tilde{P}_n\|)\,\mbox{dist}(u_*,E_n)+\|B_n\|\delta
\]
holds, which splits the total error into an approximation error term and a term bounding the noise propagation.
Error estimates of this type will enable the construction of convergent parameter choice rules also here and the concepts of quasi-optimality (uniform boundedness of $\|\tilde{P}_n\|$) and robustness (uniform boundedness of $\|B_n\| \alpha_n^{-1}$ with $\alpha_n=\sup_{u_n\in E_n,\,\|Au_n\|=1}\|u_n\|$) can be recovered in the boundedness conditions \eqref{bd}, \eqref{bd1}, \eqref{bdDP}, \eqref{kappa*appr0}, \eqref{kappa2*appr0}.

Note that computing general projection, least squares and least error  approximations in general Banach spaces might not be trivial. The reader is referred  e.g., to \cite[Section 3]{SchRieSch12}, \cite{SchSchLou08} for some iterative methods  (Landweber type, sequential subspace optimization) in uniformly convex and smooth Banach spaces.

\medskip


This work is organized as follows. Well-definedness, stability and convergence with a priori and a posteriori choices of  the dimension parameter are shown for the general projection method, the least squares method and the least error method in  Section \ref{sec_genproj}, \ref{sec_leastsquares} and  \ref{sec_leasterror}, respectively. This theory has, of course, its limitations and can approach problems in various couples of smaller or larger  function spaces $E$ and $F$,  as shortly outlined in   Section \ref{sec_space}. Some applications are  discussed in Section \ref{sec_applic}. Namely, analytical considerations and numerical tests are provided for a collocation method applied to a Volterra integral equation in one dimension space. 


\section{The general projection method}\label{sec_genproj}

Throughout this section, $E_n\subseteq E$ and $Z_n\subseteq F^*$, $n\in\N$, are finite dimensional subspaces.

\subsection{Well-definedness}
The following Lemma gives conditions for well-definedness of $u_n$ according to \eqref{projmeth}.
\begin{lemma}\label{welldef_projmeth}
Let 
\be{dim}
\mbox{dim}(E_n)=\mbox{dim}(Z_n)
\ee 
and 
\be{nsp}
\mathcal{N}(Q_nA)\cap E_n=\{0\}
\ee
hold. Then \eqref{projmeth} is uniquely solvable for any $\fdel\in F$.
\end{lemma}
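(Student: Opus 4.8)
The plan is to reduce this well-definedness question to finite dimensional linear algebra via the equivalent formulation \eqref{projmethQn}. First I would observe that, by the defining property of $Q_n$, requiring $\dupF{z_n,Au_n}=\dupF{z_n,\fdel}$ for every $z_n\in Z_n$ is equivalent to the single identity $Q_nAu_n=Q_n\fdel$ in $Z_n^*$, since two elements of $Z_n^*$ coincide precisely when they agree against every $z_n\in Z_n$. Thus \eqref{projmeth} is uniquely solvable if and only if the linear equation $Q_nA\,u_n=Q_n\fdel$ has a unique solution $u_n$ ranging over $E_n$.

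Next I would consider the linear operator $T:=Q_nA|_{E_n}:E_n\to Z_n^*$. Both $E_n$ and $Z_n^*$ are finite dimensional, and by \eqref{dim} together with the elementary identity $\mbox{dim}(Z_n^*)=\mbox{dim}(Z_n)$ valid for finite dimensional $Z_n$, they share the same dimension. Hypothesis \eqref{nsp} asserts exactly that $\mathcal{N}(T)=\mathcal{N}(Q_nA)\cap E_n=\{0\}$, that is, $T$ is injective.

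The key step is then the standard fact that an injective linear map between finite dimensional spaces of equal dimension is bijective: by the rank--nullity theorem, trivial kernel forces the range of $T$ to fill all of $Z_n^*$, so $T$ is onto as well. Consequently, for any $\fdel\in F$ the right-hand side $Q_n\fdel$ belongs to $Z_n^*=\mbox{range}(T)$, whence there is a unique $u_n\in E_n$ with $Tu_n=Q_n\fdel$; by the equivalence established above, this $u_n$ is the unique solution of \eqref{projmeth}.

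I do not anticipate any genuine obstacle here. The only points needing a little care are the dual-dimension identity $\mbox{dim}(Z_n^*)=\mbox{dim}(Z_n)$, which relies on the finite dimensionality of $Z_n$, and the reformulation of the family of dual-pairing conditions in \eqref{projmeth} as a single operator equation on the finite dimensional space $Z_n^*$; both are routine.
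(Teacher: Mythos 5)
Your proof is correct and follows essentially the same route as the paper's: both reduce \eqref{projmeth} to a finite dimensional linear system (via \eqref{projmethQn}) and use \eqref{dim} together with the injectivity hypothesis \eqref{nsp} to conclude bijectivity, the paper stating the equivalence of unique solvability and uniqueness tersely where you spell it out via $\mbox{dim}(Z_n^*)=\mbox{dim}(Z_n)$ and rank--nullity. No gaps.
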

\begin{proof}
Since \eqref{projmeth} is a finite dimensional linear system, with \eqref{dim}, unique solvability for any right hand side is equivalent to uniqueness, i.e., to the condition 
\[
\Bigl(w_n\in E_n \mbox{ and } \forall z_n\in Z_n \, : \ \dupF{z_n,A w_n}=0  \Bigr) \ \Rightarrow \ w_n=0
\]
This is the same as \eqref{nsp}.
\end{proof}

\subsection{Stability}

For stating stability we will make use of the following quantity:
\be{kappatilde}
\begin{aligned}
\tilde{\kappa}_n:=&
\sup_{w_n\in E_n,w_n\neq 0}\frac{\normE{w_n}}{\sup_{z_n\in Z_n ,\normFd{z_n}=1}\dupF{z_n,Aw_n}} \\
=&\frac{1}{\min_{w_n\in E_n,\normE{w_n}=1} \max_{z_n\in Z_n ,\normFd{z_n}=1} 
\dupF{z_n,Aw_n}} \\
=&\max_{w_n\in E_n, \normE{w_n}=1}\frac{1}{\norm{Q_nAw_n}_{Z_n^*}},
\end{aligned}
\ee
that is finite under conditions \eqref{dim}, \eqref{nsp}.
In case $\tau_n$ defined as
\be{tau_n}
\tau_n :=\sup_{w_n\in E_n,w_n\neq 0}\frac{\normF{Aw_n}}{\norm{Q_nAw_n}_{Z_n^*}}
\ee
is finite, which, e.g., is ensured by \eqref{nsp}, one can bound $\tilde{\kappa}_n$ by means of 
the simpler quantity 
\be{kappa_n}
\kappa_n:=\sup_{w_n\in E_n}\frac{\normE{w_n}}{\normF{Aw_n}}
=\max_{w_n\in E_n, \normE{w_n}=1}\frac{1}{\normF{Aw_n}}.
\ee

\begin{lemma}\label{kappa-inequality}
Suppose that 
\eqref{dim} and \eqref{nsp} hold.
Then the operator $A_n:=Q_nA|_{E_n}:E_n \to Z_n^*$ has an inverse and
\be{kappa_tau_n}
\kappa_n \leq \|A_n^{-1}\|=\tilde{\kappa}_n \leq \tau_n \kappa_n. 
\ee
\end{lemma}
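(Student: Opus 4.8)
The plan is to treat the three relations in \eqref{kappa_tau_n} in turn, starting with well-definedness and the norm identity for $A_n^{-1}$, and then obtaining the two flanking inequalities as soft consequences of the definitions of $\tilde{\kappa}_n$, $\tau_n$ and $\kappa_n$.

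First I would establish that $A_n=Q_nA|_{E_n}$ is invertible. Since $Z_n$ is finite dimensional, $\dim Z_n^*=\dim Z_n$, so by \eqref{dim} the domain $E_n$ and the target $Z_n^*$ have the same finite dimension. The kernel of $A_n$ is exactly $\mathcal{N}(Q_nA)\cap E_n$, which is trivial by \eqref{nsp}; hence $A_n$ is an injective linear map between finite dimensional spaces of equal dimension, and therefore a bijection with a bounded inverse. This is essentially the computation already used in Lemma \ref{welldef_projmeth}.

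Next I would compute $\norm{A_n^{-1}}$ directly from the operator norm. Writing $w_n=A_n^{-1}y$ and letting $y$ range over $Z_n^*\setminus\{0\}$, bijectivity lets $w_n$ range over $E_n\setminus\{0\}$, so
\[
\norm{A_n^{-1}}=\sup_{w_n\in E_n,\,w_n\neq0}\frac{\normE{w_n}}{\norm{Q_nAw_n}_{Z_n^*}}.
\]
The key identification is that, by the defining relation $\dupZn{Q_ng,z_n}=\dupF{z_n,g}$ together with the dual-norm formula on $Z_n^*$, one has $\norm{Q_nAw_n}_{Z_n^*}=\sup_{z_n\in Z_n,\,\normFd{z_n}=1}\dupF{z_n,Aw_n}$. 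Substituting this shows that the right-hand side is precisely the first expression for $\tilde{\kappa}_n$ in \eqref{kappatilde}, giving $\norm{A_n^{-1}}=\tilde{\kappa}_n$.

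Finally, the two inequalities follow by comparing $\norm{Q_nAw_n}_{Z_n^*}$ with $\normF{Aw_n}$. Since $\norm{Q_n}=1$ by \eqref{Qn1}, we have $\norm{Q_nAw_n}_{Z_n^*}\leq\normF{Aw_n}$, whence $1/\norm{Q_nAw_n}_{Z_n^*}\geq1/\normF{Aw_n}$; taking the maximum over $\normE{w_n}=1$ and using the last expressions in \eqref{kappatilde} and \eqref{kappa_n} yields $\kappa_n\leq\tilde{\kappa}_n$. For the upper bound, factoring the ratio as
\[
\frac{\normE{w_n}}{\norm{Q_nAw_n}_{Z_n^*}}
=\frac{\normE{w_n}}{\normF{Aw_n}}\cdot\frac{\normF{Aw_n}}{\norm{Q_nAw_n}_{Z_n^*}}
\leq\kappa_n\,\tau_n
\]
for every nonzero $w_n\in E_n$, and taking the supremum, gives $\tilde{\kappa}_n\leq\tau_n\kappa_n$. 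I do not anticipate a serious obstacle; the only point requiring care is the dual-norm identification for $\norm{Q_nAw_n}_{Z_n^*}$, which is what turns the abstract operator norm $\norm{A_n^{-1}}$ into the concrete min--max quantity $\tilde{\kappa}_n$.
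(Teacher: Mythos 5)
Your proof is correct and follows essentially the same route as the paper: the paper likewise obtains $\tilde{\kappa}_n\leq\tau_n\kappa_n$ from the chain $\norm{A_nw_n}_{Z_n^*}\geq\tau_n^{-1}\normF{Aw_n}\geq\tau_n^{-1}\kappa_n^{-1}\normE{w_n}$ and $\kappa_n\leq\tilde{\kappa}_n$ from $\norm{Q_n}=1$ applied to a maximizer of \eqref{kappa_n}, leaving invertibility and the identity $\norm{A_n^{-1}}=\tilde{\kappa}_n$ implicit in Lemma \ref{welldef_projmeth} and the last expression in \eqref{kappatilde}. Your write-up merely makes those implicit steps explicit, which is fine.
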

\begin{proof}
For any $w_n\in E_n$ we have $\normF{A_n w_n}\geq \tau_n^{-1} \normF{Aw_n} \geq \tau_n^{-1} \kappa_n^{-1} \normE{w_n}$. Let $v_n\in E_n$ be an element for which in \eqref{kappa_n} the maximum is attained. Then by \eqref{Qn1} we have $\normF{A_n v_n} \leq \normF{A v_n}= \kappa_n^{-1} \normE{v_n}$.
\end{proof}

\begin{remark} \label{kappafinite}

Under  conditions \eqref{dim} and \eqref{nsp} of Lemma \ref{welldef_projmeth} one has  $\tilde{\kappa}_n<\infty$, since one takes the supremum over the unit sphere, which is compact in the finite dimensional spaces under consideration. 
The definition of the reciprocal of the stability factor $\tilde{\kappa}_n$ in the general projection method reveals the relation to Ladyshenskaja-Babuska-Brezzi (or inf-sup) conditions used for showing well-posedness of Petrov-Galerkin discretizations of partial differential equations.

\end{remark}

\medskip

For the general projection method we get:
\begin{lemma}\label{stability_projmeth}
Let the assumptions of Lemma \ref{welldef_projmeth} be satisfied and consider, for $f_1, f_2\in F$ the solutions of 
\[
u_{n,i}\in E_n \mbox{ and } \forall z_n\in Z_n \, : \ \dupF{z_n,A u_{n,i}}=\dupF{z_n,f_i} \quad i=1,2\,. 
\]
Then the estimate 
\[
\normE{u_{n,1}-u_{n,2}}\leq \tilde{\kappa}_n \normF{f_1-f_2}
\]
holds.
\end{lemma}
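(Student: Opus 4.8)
The plan is to reduce the estimate to the operator-theoretic reformulation already prepared in the preceding results, so that the bound follows by a single inversion together with submultiplicativity of operator norms. First I would set $w_n:=u_{n,1}-u_{n,2}\in E_n$ and subtract the two defining relations. By linearity of $A$ and of the duality pairing, for every $z_n\in Z_n$ one obtains $\dupF{z_n,Aw_n}=\dupF{z_n,f_1-f_2}$. Rewriting both sides through the operator $Q_n$ (as in \eqref{projmethQn}) and using that the identity holds for all $z_n\in Z_n$, this is precisely $A_n w_n = Q_n(f_1-f_2)$ in $Z_n^*$, where $A_n=Q_nA|_{E_n}$ is the operator introduced in Lemma \ref{kappa-inequality}.

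Since the assumptions \eqref{dim} and \eqref{nsp} are in force, Lemma \ref{kappa-inequality} guarantees that $A_n$ is invertible with $\|A_n^{-1}\|=\tilde{\kappa}_n$. Inverting the relation above yields $w_n = A_n^{-1}Q_n(f_1-f_2)$, and taking $E$-norms gives $\normE{w_n}\leq \|A_n^{-1}\|\,\|Q_n(f_1-f_2)\|_{Z_n^*}$. It then remains only to invoke the two quantitative facts already established: $\|Q_n\|=1$ from \eqref{Qn1}, whence $\|Q_n(f_1-f_2)\|_{Z_n^*}\leq\normF{f_1-f_2}$, and $\|A_n^{-1}\|=\tilde{\kappa}_n$ from \eqref{kappa_tau_n}. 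Combining these yields $\normE{w_n}\leq \tilde{\kappa}_n\normF{f_1-f_2}$, which is the claim.

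I expect no genuine obstacle here: the difficulty has been front-loaded into Lemma \ref{kappa-inequality} (identifying $\|A_n^{-1}\|$ with $\tilde{\kappa}_n$) and into the norm computation \eqref{Qn1}, so the present argument is essentially a one-line inversion. As a cross-check one may argue directly from the variational definition \eqref{kappatilde}: applied to $w_n$ it gives $\normE{w_n}\leq \tilde{\kappa}_n \sup_{z_n\in Z_n,\,\normFd{z_n}=1}\dupF{z_n,Aw_n}$, and after the substitution $\dupF{z_n,Aw_n}=\dupF{z_n,f_1-f_2}$ the supremum over the unit sphere of $Z_n\subseteq F^*$ is bounded by the supremum over the whole dual unit sphere, namely $\normF{f_1-f_2}$. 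Both routes deliver the same constant $\tilde{\kappa}_n$, which is reassuring given the chain of identities in \eqref{kappa_tau_n}.
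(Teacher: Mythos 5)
Your proposal is correct, and in fact it contains the paper's proof verbatim as your ``cross-check'': the paper argues exactly that way, setting $\hat{u}_n=u_{n,1}-u_{n,2}\in E_n$ (linearity of $E_n$), subtracting the two defining relations to get $\dupF{z_n,A\hat{u}_n}=\dupF{z_n,f_1-f_2}$ for all $z_n\in Z_n$, applying the first form of the definition \eqref{kappatilde} of $\tilde{\kappa}_n$, and bounding the supremum over the unit sphere of $Z_n\subseteq F^*$ by $\normF{f_1-f_2}$.

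Your primary route is a mild repackaging rather than a genuinely different argument: rewriting the difference equation as $A_n\hat{u}_n=Q_n(f_1-f_2)$ and invoking $\|A_n^{-1}\|=\tilde{\kappa}_n$ from Lemma \ref{kappa-inequality} together with $\|Q_n\|=1$ from \eqref{Qn1} just routes the same inequality through the operator-theoretic identities; note that the equality $\|A_n^{-1}\|=\tilde{\kappa}_n$ is itself the definition \eqref{kappatilde} in disguise (its third form is $\max_{w_n\in E_n,\normE{w_n}=1}\norm{Q_nAw_n}_{Z_n^*}^{-1}$), so nothing is gained beyond emphasizing that the lemma is a one-line corollary of invertibility of $A_n$, which does require \eqref{dim} for surjectivity. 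One harmless subtlety worth being aware of in either route: the supremum in \eqref{kappatilde} is taken without absolute values, but since the unit sphere of the linear space $Z_n$ is symmetric ($z_n\mapsto -z_n$), it coincides with $\norm{Q_nAw_n}_{Z_n^*}$, so your identification of the two forms is legitimate.
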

\begin{proof}
According to Lemma \ref{welldef_projmeth}, the solutions $u_{n,i}, i=1,2\,$  are well defined. Then,
the difference $\hat{u}_n=u_{n,1}-u_{n,2}$ satisfies 
\[
\hat{u}_n\in E_n \mbox{ and } \forall z_n\in Z_n \, : \ \dupF{z_n,A \hat{u}_n}=\dupF{z_n,f_1-f_2}. 
\]
Therefore, by definition of $\tilde{\kappa}_n$ and $u_{n,1}-u_{n,2}\in E_n$ (due to linearity of the space $E_n$) we have
\[
\begin{aligned}
\normE{u_{n,1}-u_{n,2}}\leq& \tilde{\kappa}_n \sup_{z_n\in Z_n ,\normFd{z_n}=1}\dupF{z_n,A(u_{n,1}-u_{n,2})}\\
=&\tilde{\kappa}_n \sup_{z_n\in Z_n ,\normFd{z_n}=1}\dupF{z_n,f_1-f_2}\leq \tilde{\kappa}_n \normF{f_1-f_2}
\end{aligned}
\]
\end{proof}

\subsection{Convergence with a priori choice of $n$}

\begin{theorem}\label{conv_projmeth_apriori}
Let for all $n\in\N$, the assumptions of Lemma \ref{welldef_projmeth} be satisfied and let $u_n$ be defined by the projection method \eqref{projmeth}. Additionally, we assume that 
there exists a sequence of approximations $(\hat{u}_n)_{n\in\N}$, $\hat{u}_n\in E_n$, satisfying the convergence conditions
\be{convPn}
\normE{\usol-\hat{u}_n}\to0\mbox{ as }n\to\infty
\ee
and 
\be{bd_conv}
\tilde{\kappa}_n 
\sup_{z_n\in Z_n ,\normFd{z_n}=1}\dupF{z_n,A(\usol-\hat{u}_n)}
\to0\mbox{ as }n\to\infty\,.
\ee

Then for exact data $\delta=0$ we have convergence 
\[\normE{u_n-\usol}\to0 \mbox{ as }n\to\infty\,.\]
For noisy data and with the dimension $n=n(\delta)$ chosen such that
\be{aprioristoppingrule}
n(\delta)\to\infty\mbox{ and }\tilde{\kappa}_{n(\delta)} \delta\to0 \mbox{ as }\delta\to0
\ee
we have convergence  
\[\normE{u_{n(\delta)}-\usol}\to0 \mbox{ as }\delta\to0\,.\]
\end{theorem}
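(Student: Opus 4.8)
The plan is to split the total error with the triangle inequality into an approximation part and a noise-propagation part, mediated by the auxiliary sequence $(\hat{u}_n)$. First I would write
\[
\normE{u_n-\usol}\leq \normE{u_n-\hat{u}_n}+\normE{\hat{u}_n-\usol},
\]
where the second summand is controlled directly by the convergence condition \eqref{convPn}. Since $E_n$ is a linear subspace we have $u_n-\hat{u}_n\in E_n$, so the stability quantity $\tilde{\kappa}_n$ from its definition \eqref{kappatilde} applies and gives
\[
\normE{u_n-\hat{u}_n}\leq \tilde{\kappa}_n \sup_{z_n\in Z_n,\normFd{z_n}=1}\dupF{z_n,A(u_n-\hat{u}_n)}.
\]

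The key computation is then to rewrite the residual $\dupF{z_n,A(u_n-\hat{u}_n)}$ for $z_n\in Z_n$. Using the defining relation of the projection method \eqref{projmeth}, namely $\dupF{z_n,Au_n}=\dupF{z_n,\fdel}$, together with the fact that $\usol$ is an exact solution, $A\usol=f$, I would expand
\[
\dupF{z_n,A(u_n-\hat{u}_n)}=\dupF{z_n,\fdel-f}+\dupF{z_n,A(\usol-\hat{u}_n)}.
\]
Taking the supremum over $z_n\in Z_n$ with $\normFd{z_n}=1$ and using $\dupF{z_n,\fdel-f}\leq\normFd{z_n}\normF{\fdel-f}\leq\delta$ by \eqref{delta}, this produces the master estimate
\[
\normE{u_n-\usol}\leq \tilde{\kappa}_n\,\delta+\tilde{\kappa}_n\sup_{z_n\in Z_n,\normFd{z_n}=1}\dupF{z_n,A(\usol-\hat{u}_n)}+\normE{\hat{u}_n-\usol},
\]
which cleanly separates the noise contribution from the approximation residual of $\usol$.

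Finally I would pass to the limit term by term. For exact data $\delta=0$ the first summand vanishes, the second tends to zero by the convergence condition \eqref{bd_conv}, and the third by \eqref{convPn}, yielding $\normE{u_n-\usol}\to0$. For noisy data with $n=n(\delta)$ chosen according to \eqref{aprioristoppingrule}, the stopping rule enforces $\tilde{\kappa}_{n(\delta)}\delta\to0$ for the noise term, while $n(\delta)\to\infty$ lets me invoke \eqref{bd_conv} and \eqref{convPn} along the sequence $n(\delta)$ for the remaining two terms. I do not expect a genuine obstacle: the only step requiring care is the expansion of the residual $\dupF{z_n,A(u_n-\hat{u}_n)}$, where one must correctly isolate the data error from the approximation residual and verify that the surviving supremum term coincides exactly with the quantity assumed to vanish in \eqref{bd_conv}.
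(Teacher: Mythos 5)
Your proposal is correct and follows essentially the same argument as the paper: the paper proves the identical master estimate (its \eqref{est_conv_projmeth}) for an arbitrary $w_n\in E_n$ and then inserts $w_n=\hat{u}_n$, using exactly your decomposition via the triangle inequality, the stability quantity $\tilde{\kappa}_n$, and the substitution $\dupF{z_n,A(u_n-\usol)}=\dupF{z_n,\fdel-f}$ from the projection equations. The only (immaterial) difference is that you specialize to $\hat{u}_n$ from the start rather than keeping $w_n$ generic.
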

\begin{proof}
For any $w_n\in E_n$ we have, by definition \eqref{kappatilde} of $\tilde{\kappa}_n$ and $u_n-w_n\in E_n$ (here linearity of the space $E_n$ is used), that 
\be{est_conv_projmeth}
\begin{aligned}
&\normE{u_n-\usol}\leq\normE{\usol-w_n}+\normE{u_n-w_n}\\
&\leq\normE{\usol-w_n}+ \tilde{\kappa}_n \sup_{z_n\in Z_n ,\normFd{z_n}=1}\dupF{z_n,A(u_n-w_n)}\\
&\leq\normE{\usol-w_n}+ \tilde{\kappa}_n \sup_{z_n\in Z_n ,\normFd{z_n}=1}\Bigl(\dupF{z_n,A(u_n-\usol)}+\dupF{z_n,A(\usol-w_n)}\Bigr)\\
&=\normE{\usol-w_n}+ \tilde{\kappa}_n \sup_{z_n\in Z_n ,\normFd{z_n}=1}\Bigl(\dupF{z_n,\fdel-f}
+\dupF{z_n,A(\usol-w_n)}\Bigr)\\
&\leq\normE{\usol-w_n}+ \tilde{\kappa}_n \Bigl( \delta + \sup_{z_n\in Z_n ,\normFd{z_n}=1}\dupF{z_n,A(\usol-w_n)} \Bigr)
\end{aligned}
\ee
where we have also used linearity of $A$.
Inserting $w_n=\hat{u}_n$ and using \eqref{convPn}, \eqref{bd_conv}, 
together with our assumptions on the choice of $n(\delta)$ we therefore immediately get the assertions in both cases $\delta=0$ and $\delta>0$.
\end{proof}
\begin{remark}\label{approx}
i) The approximation property \eqref{convPn} holds, e.g.,  when the subspaces $E_n$ are chosen according to \eqref{Espace}.

ii) Under conditions \eqref{P_n} and 
{
\be{convPPn}
\normE{\usol-P_n\usol}\to0\mbox{ as }n\to\infty,
\ee
}
on some sequence of operators $\{P_n:E\to E_n\,, \ n \in \N\}$, 
the uniform boundedness condition 
\be{bd}
\exists C<\infty \, \forall n\in\N \, : \, 
\tilde{\kappa}_n \sup_{w\in E,\normE{w}=1} 
\sup_{z_n\in Z_n ,\normFd{z_n}=1}\dupF{z_n,A(I-P_n)w}
\leq C 
\ee
is sufficient for \eqref{bd_conv} and by \eqref{est_conv_projmeth} with $w_n=P_n\usol$ yields the estimate
\be{est_projmeth}
\normE{u_n-\usol}\leq (1+C) \normE{\usol-P_n\usol} +\tilde{\kappa}_n \delta\,.
\ee 

In the context of Petrov Galerkin discretizations of PDEs, estimate \eqref{est_conv_projmeth} is known as Strang's First Lemma.

\end{remark}

\subsection{Convergence with a posteriori choice of $n$ -- the discrepancy principle}

\begin{theorem}\label{conv_projmeth_dp}
Let  the assumptions of Lemma \ref{welldef_projmeth} be satisfied for all $n\in\N$  and let $u_n$ be defined by the projection method \eqref{projmeth}. We also assume that there exists a sequence of approximations $(\hat{u}_n)_{n\in\N}$, $\hat{u}_n\in E_n$, satisfying \eqref{convPPn} and the conditions
\be{bd1_conv}
\kappa_n \sup_{z_n\in Z_n ,\normFd{z_n}=1}\dupF{z_n,A(\usol-\hat{u}_n)}\to0\mbox{ as }n\to\infty
\ee
\be{bdDP_conv}
\kappa_{n+1} \normF{(I-Q_n')A(\usol-\hat{u}_n)} \to0\mbox{ as }n\to\infty\,.
\ee
Additionally, we assume that there exists $\tau<\infty$ such that
\be{tau}
\tau \sup_{z_n\in Z_n ,\normFd{z_n}=1}\dupF{z_n,Aw_n}\geq  \normF{Aw_n}, \ \forall w_n\in E_n,
\ee
i.e., $\tau_n\leq \tau$ for all $n\in\N$, where $\tau_n$ is defined by \eqref{tau_n}.

Denote $d_{DP}(n)=\normF{Au_n-\fdel}, n\in\N$.
Let $b>1+\tau$ be fixed  and for $\delta>0$,  let $n=n_{DP}(\delta)$ be the first index such that
\be{nDP}
d_{DP}(n)\leq b\delta. 
\ee

Then $n_{DP}(\delta)$ is finite.\\
Moreoever, $u_{n_{DP}(\delta)}\to\usol$ as $\delta\to0$ subsequentially in the following sense: There exists a convergent subsequence and the limit of every convergent subsequence solves \eqref{Auf}; if $\usol$ is unique, then $\normE{u_{n_{DP}(\delta)}-\usol}\to0$ as $\delta\to0$.
\end{theorem}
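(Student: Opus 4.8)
The overall strategy is to separate the statement into finiteness of the stopping index $n_{DP}(\delta)$, the fact that subsequential limits solve \eqref{Auf}, and a quantitative error bound that, together with a case distinction on the behaviour of $n_{DP}(\delta)$, yields the convergence. Throughout I would exploit that the general projection method \eqref{projmeth} enforces $\dupF{z_n,Au_n-\fdel}=0$ for all $z_n\in Z_n$, i.e.\ $Q_n(Au_n-\fdel)=0$, and that $\norm{Q_n}=1$ by \eqref{Qn1}.

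For finiteness I would bound the discrepancy from above. Splitting $Au_n-\fdel=A(u_n-\hat u_n)+(A\hat u_n-\fdel)$, using $u_n-\hat u_n\in E_n$ with $\tau_n\le\tau$ from \eqref{tau} and $\norm{Q_n}=1$ to estimate $\normF{A(u_n-\hat u_n)}\le\tau\norm{Q_n(\fdel-A\hat u_n)}_{Z_n^*}\le\tau(\delta+\normF{A(\usol-\hat u_n)})$, and invoking $A\usol=f$, I obtain $d_{DP}(n)\le(1+\tau)\delta+(1+\tau)\normF{A(\usol-\hat u_n)}$. Since $A$ is bounded and $\normE{\usol-\hat u_n}\to0$ (cf.\ \eqref{convPPn}), the approximation term tends to $0$, so $\limsup_{n\to\infty}d_{DP}(n)\le(1+\tau)\delta<b\delta$ because $b>1+\tau$; hence the first index with $d_{DP}(n)\le b\delta$ exists.

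That any subsequential limit solves \eqref{Auf} is the easy part: for a convergent subsequence $u_{n_{k_j}}\to\bar u$ with $\delta_{k_j}\to0$, the stopping rule \eqref{nDP} gives $\normF{Au_{n_{k_j}}-f}\le\normF{Au_{n_{k_j}}-f^{\delta_{k_j}}}+\delta_{k_j}\le(b+1)\delta_{k_j}\to0$, so continuity of $A$ forces $A\bar u=f$. To guarantee that a convergent subsequence exists I would distinguish two cases for $(n_k)$, $n_k=n_{DP}(\delta_k)$: if $(n_k)$ has a bounded, hence constant $\equiv n^*$, subsequence, then Lemma \ref{stability_projmeth} gives $\normE{u_{n^*}^{\delta_{k_j}}-\bar u_{n^*}}\le\tilde\kappa_{n^*}\delta_{k_j}\to0$, where $\bar u_{n^*}$ is the level-$n^*$ projection solution for exact data $f$; if instead $n_k\to\infty$ (along a subsequence), the error bound below produces convergence to $\usol$.

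The crux is the quantitative bound on $\normE{u_m-\usol}$, $m=n_{DP}(\delta)$, when $n_k\to\infty$. From $\normE{u_m-\usol}\le\normE{\usol-\hat u_m}+\normE{u_m-\hat u_m}$, the definition \eqref{kappatilde} of $\tilde\kappa_m$, the orthogonality $\dupF{z_m,Au_m-\fdel}=0$, and $\tilde\kappa_m\le\tau_m\kappa_m\le\tau\kappa_m$ from \eqref{kappa_tau_n} and \eqref{tau}, one gets $\normE{u_m-\hat u_m}\le\tau\kappa_m\delta+\tau\kappa_m\sup_{z_m\in Z_m,\normFd{z_m}=1}\dupF{z_m,A(\usol-\hat u_m)}$, whose second summand and the term $\normE{\usol-\hat u_m}$ vanish as $m\to\infty$ by \eqref{bd1_conv} and the approximation property. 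The remaining obstacle---and the heart of the proof---is to show $\kappa_m\delta\to0$. Here I would use the first-index property $d_{DP}(m-1)>b\delta$ (for $m\ge2$) together with a \emph{matching} upper bound that reproduces the metric-projection residual: decomposing $Au_{m-1}-\fdel$ into its $AE_{m-1}$-component and the complementary term $-(I-Q_{m-1}')A(\usol-\hat u_{m-1})$ (plus the noise $f-\fdel$), and estimating the in-space component once more via $\tau_{m-1}\le\tau$ and $\norm{Q_{m-1}}=1$ after simplifying through the orthogonality relation, one arrives at $d_{DP}(m-1)\le(1+\tau)\delta+(1+\tau)\normF{(I-Q_{m-1}')A(\usol-\hat u_{m-1})}$. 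Combined with $b\delta<d_{DP}(m-1)$ and $b>1+\tau$ this yields $\delta<\frac{1+\tau}{b-1-\tau}\normF{(I-Q_{m-1}')A(\usol-\hat u_{m-1})}$, so that $\kappa_m\delta\to0$ by exactly \eqref{bdDP_conv} (with $n=m-1\to\infty$). I expect the delicate point to be extracting the projected residual $(I-Q_{m-1}')A(\usol-\hat u_{m-1})$ rather than the full $\normF{A(\usol-\hat u_{m-1})}$, since only the former is controlled by the hypotheses---this is precisely where the decomposition and $\norm{Q_{m-1}}=1$ enter. Assembling the three vanishing contributions gives $\normE{u_m-\usol}\to0$ along $n_k\to\infty$; the case distinction then furnishes a convergent subsequence in general, and a standard contradiction argument (any subsequence has a sub-subsequence falling into one of the two cases, both forcing the limit $\usol$ under uniqueness) upgrades this to $\normE{u_{n_{DP}(\delta)}-\usol}\to0$.
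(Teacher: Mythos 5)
Your proposal is correct and follows essentially the same route as the paper: the $(1+\tau)$ bound on the discrepancy for finiteness, the first-index property at $m=n_{DP}(\delta)-1$ yielding $\delta<\frac{1+\tau}{b-1-\tau}\normF{(I-Q'_{m-1})A(\usol-\hat u_{m-1})}$ so that \eqref{bdDP_conv} controls $\kappa_{n_{DP}(\delta)}\delta$, and assembly via \eqref{est_conv_projmeth} with $\tilde\kappa_n\le\tau\kappa_n$ from \eqref{kappa_tau_n} and \eqref{tau}. The only (harmless) deviations are cosmetic: you handle the bounded-stopping-index case by extracting a constant subsequence and invoking the stability Lemma \ref{stability_projmeth}, where the paper uses boundedness plus compactness in the finite-dimensional span of $E_0,\dots,E_N$, and you bound $\mbox{dist}(\fdel,AE_n)$ by the competitor $\hat u_n$ rather than via the metric projection $Q_n'$.
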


\begin{proof}
By Lemma \ref{kappa-inequality} and assumption \eqref{tau} we can use $\kappa_n$ as in \eqref{kappa_n} instead of $\tilde{\kappa}_n$ as in \eqref{kappatilde} here.\\
For any $n$ let $w_n\in E_n$ be such that $Q_n' f^{\delta}=Aw_n$.
From \eqref{tau} it follows that
\be{est_discr}
\begin{aligned}
\normF{Au_n-f^{\delta}} &\leq \normF{A(u_n-w_n)} + \normF{Aw_n-f^{\delta}} \\
&\leq \tau \sup_{z_n\in Z_n ,\normFd{z_n}=1}\dupF{z_n,A(u_n-w_n)} + \normF{Aw_n-f^{\delta}}\\
&= \tau \sup_{z_n\in Z_n ,\normFd{z_n}=1}\dupF{z_n,f^{\delta}-Aw_n} + \normF{Aw_n-f^{\delta}} \\
&\leq (\tau+1 )\, \mbox{dist}(\fdel,AE_n).
\end{aligned}
\ee
In particular, since 
\[
\begin{aligned}
\limsup_{n\to\infty} \mbox{dist}(\fdel,AE_n)&\leq \delta + \limsup_{n\to\infty} \mbox{dist}(A\usol,AE_n)\\
&\leq \delta+ \limsup_{n\to\infty}\norm{A(\usol-\hat{u}_n)} = \delta,
\end{aligned}
\]
by \eqref{convPn}, this implies that $n_{DP}(\delta)$ is finite.
 
If for some $\delta_k\to 0\, (k\to\infty)$ the discrepancy principle gives $n_{DP}(\delta_k)\leq N$, with $N\geq 0$, then the sequence $u_{n_{DP}(\delta_k)}$ lies in a finite-dimensional subspace -- the linear hull of $E_n$, $n=0,\ldots,N$. Boundedness and therefore relative compactness of $(u_{n_{DP}(\delta_k)})$ follows from \eqref{est_conv_projmeth} (e.g., with $w_n=0$). Since $\normF{A u_{n_{DP}(\delta_k)}-f^{\delta_k}}\leq b\delta_k$, then $ A u_{n_{DP}(\delta_k)}\to f$ as $k\to \infty$. Hence $ (u_{n_{DP}(\delta_k)})$ has a convergent subsequence and the limit of every convergent subsequence solves \eqref{Auf}.

Otherwise,  $n_{DP}(\delta)$ will be larger than zero. In this case, let $m= n_{DP}(\delta)-1\geq 0$. 
For $n=m$ the inequality \eqref{nDP} does not hold and \eqref{est_discr} gives
\[
b\delta< \normF{Au_m-\fdel}\leq (\tau+1 )\, \mbox{dist}(\fdel,AE_m) \leq (\tau+1 ) (\delta+\mbox{dist}(f,AE_m)).
\]
Since $b>1+\tau$ we have
\begin{equation}\label{estQmp}
\frac{(b-1-\tau)\delta}{\tau+1}< \mbox{dist}(f,AE_m) = \normF{A\usol -Q_m'A\usol} = \normF{(I-Q_m')A(\usol-\hat u_m)}.
\end{equation}

Inserting this into \eqref{est_conv_projmeth} with $w_n=\hat{u}_n$, $n=n_{DP}(\delta)$ and using \eqref{kappa_tau_n}, \eqref{convPn}, \eqref{bdDP_conv}, \eqref{bd1_conv}, we get convergence if $n_{DP}(\delta)\to\infty$ as $\delta\to0$.

\end{proof}

\begin{remark}
If  some sequence of  operators $\{P_n:E\to E_n\,, \ n \in \N\}$ satisfies \eqref{P_n} and \eqref{convPPn},
then \eqref{bd1_conv} and \eqref{bdDP_conv} follow from \eqref{convPn} for $\hat{u}_n=P_n\usol$ and from the uniform boundedness conditions
\be{bd1}
\exists C<\infty \, \forall n\in\N \, : \, 
\kappa_n \sup_{w\in E,\normE{w}=1} 
\sup_{z_n\in Z_n ,\normFd{z_n}=1}\dupF{z_n,A(I-P_n)w}
\leq C,\,
\ee
\be{bdDP}
\begin{aligned}
\exists C_1<\infty \, \forall n\in\N \, : \, 
&\kappa_{n+1} \sup_{w\in E,\normE{w}=1}\normF{(I-Q_n')A (I-P_n)w} \leq C_1.
\end{aligned}
\ee
If additionally $I-Q_n'$ is homogeneous, one has    by  \eqref{estQmp} and by homogeneity of   $I-P_n$  
\be{kappa-tau-est}
\begin{aligned}
\frac{\kappa_{m+1} (b-1-\tau)\delta }{\tau+1}&\leq \kappa_{m+1} \sup_{w\in E,\normE{w}=1}\normF{(I-Q_m')A(I-P_m) w} \normE{(I-P_m) \usol} \\
&\leq C_1 \normE{(I-P_m) \usol}.
\end{aligned}
\ee
Hence, by \eqref{est_projmeth} and Lemma \ref{kappa-inequality}, we obtain  the error estimate
\be{eeDP}
\begin{aligned}
&\normE{u_{n_{DP}(\delta)}-\usol}\\
&\leq (1+\tau C)\normE{(I-P_{n_{DP}(\delta)})\usol}+ \frac{C_1 \tau(\tau+1)}{b-\tau-1}\normE{(I-P_{n_{DP}(\delta)-1})\usol} \,,
\end{aligned}
\ee
in case ${n_{DP}(\delta)}\geq 1$.

Note that conditions \eqref{bd1} and \eqref{bdDP} correspond to  $m=1$ in condition (1.27) of \cite{VH85}. 
\end{remark}


\section{The least squares method}\label{sec_leastsquares}

Throughout this section,   $E_n\subseteq E$ is a  finite dimensional subspace. We show below that the least squares method is well-defined and converges to a solution under a priori and a posteriori choices for the discretization dimension. 

\subsection{Well-definedness}
\begin{lemma}\label{welldef_leastsquares}
Let 
\be{nsp1}
\mathcal{N}(A)\cap E_n=\{0\}.
\ee
Then the set of minimizers  $\mbox{argmin}\{ \normF{A \tilde{u}_n-\fdel}\, : \, \tilde{u}_n\in E_n\}$ is nonempty. 
If, for some $q\geq1$, the functional 
\be{PhiF}
\Phi_F: g\mapsto\tfrac{1}{q}\normF{g}^q
\ee
is strictly convex, then the minimizer is unique.
\end{lemma}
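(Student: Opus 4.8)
The plan is to establish existence and uniqueness separately, the injectivity hypothesis \eqref{nsp1} being the common ingredient in both.

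For existence I would first note that \eqref{nsp1} together with $\dim E_n<\infty$ forces the constant $\kappa_n$ from \eqref{kappa_n} to be finite: on the unit sphere $\{w_n\in E_n:\normE{w_n}=1\}$, which is compact by finite dimensionality, the continuous map $w_n\mapsto\normF{Aw_n}$ is strictly positive, hence bounded away from zero, so $\normE{w_n}\leq\kappa_n\normF{Aw_n}$ for all $w_n\in E_n$. This makes the objective coercive, since $\normF{A\tilde{u}_n-\fdel}\geq\kappa_n^{-1}\normE{\tilde{u}_n}-\normF{\fdel}\to\infty$ as $\normE{\tilde{u}_n}\to\infty$. Thus any minimizing sequence stays in a closed ball of $E_n$, which is compact, and the continuous objective attains its infimum there; the argmin is therefore nonempty.

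For uniqueness under strict convexity of $\Phi_F$, the key step is to pass from the norm to $\Phi_F$. Because $t\mapsto\tfrac{1}{q}t^q$ is strictly increasing on $[0,\infty)$ for $q\geq1$, the set of minimizers of $\tilde{u}_n\mapsto\normF{A\tilde{u}_n-\fdel}$ coincides with the set of minimizers of the functional $\tilde{u}_n\mapsto\Phi_F(A\tilde{u}_n-\fdel)$. The latter is the composition of the strictly convex $\Phi_F$ with the affine map $L\colon\tilde{u}_n\mapsto A\tilde{u}_n-\fdel$, which by \eqref{nsp1} is injective on $E_n$; injectivity sends distinct points to distinct images, so strict convexity of $\Phi_F$ transfers to strict convexity of $\Phi_F\circ L$ on $E_n$. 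A strictly convex functional has at most one minimizer, and combined with the existence just shown this yields uniqueness.

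The only delicate point is the transfer of strict convexity across the composition in the uniqueness part: one must check that $\Phi_F\circ L$ is not merely convex but strictly so, and this is precisely where injectivity of $L$ on $E_n$ — that is, \eqref{nsp1} — is indispensable, since without it $L$ could collapse a segment and only convexity would survive. The existence part reduces to standard finite-dimensional coercivity and presents no real difficulty.
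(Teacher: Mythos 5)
Your proof is correct and follows essentially the same route as the paper's: existence via coercivity of the residual functional, derived from the finiteness and positivity bound for $\kappa_n$ (compactness of the unit sphere of the finite-dimensional $E_n$ together with \eqref{nsp1}), and uniqueness by observing that minimizing $\tilde{u}_n\mapsto\normF{A\tilde{u}_n-\fdel}$ is equivalent to minimizing its $q$-th power, whose strict convexity is inherited from $\Phi_F$ through the affine map $\tilde{u}_n\mapsto A\tilde{u}_n-\fdel$, injective on $E_n$ by \eqref{nsp1}. The only cosmetic difference is that you conclude existence by norm compactness of bounded sets in $E_n$, whereas the paper phrases the same direct-method step via reflexivity and weak lower semicontinuity; in finite dimensions these arguments coincide.
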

\begin{proof}
The finite dimensional linear subspace $E_n$ is reflexive, closed, convex and nonempty. The cost functional $j:\tilde{u}_n\mapsto\normF{A \tilde{u}_n-\fdel}$ is convex, weakly lower semicontinuous, bounded from below. It is also coercive, since the minimum $\kappa_n^{-1}=\min\{\normF{A \hat{u}_n}\, : \ \hat{u}_n\in E_n,\,\norm{\hat{u}_n}=1\}$ exists  on the finite dimensional hence compact unit sphere and is positive by condition \eqref{nsp1}, hence boundedness of some sequence $(\normF{A \tilde{u}_n^k-\fdel})_{k\in\N}$ implies boundedness of $(\normE{\tilde{u}_n^k})_{k\in\N}$ as follows:
\[
\normE{\tilde{u}_n^k}
\leq \sup_{\tilde{u}_n\in E_n,\tilde{u}_n\neq 0}\frac{\normE{\tilde{u}_n}}{\normF{A\tilde{u}_n}} \normF{A\tilde{u}_n^k}
=\kappa_n \normF{A\tilde{u}_n^k} \leq \kappa_n (\normF{A\tilde{u}_n^k-\fdel}+\normF{\fdel})\,.
\]
Thus we can conclude existence of a minimizer.\\
Minimizing $j$ over $E_n$ is obviously equivalent to minimizing $\tfrac{1}{q}j^q$ over $E_n$.
Moreover, strict convexity of the functional $\Phi_F$ by \eqref{nsp1} transfers to the functional $\tfrac{1}{q}j^q:u_n\mapsto \tfrac{1}{q}\normF{A \tilde{u}_n-\fdel}^q$ on $E_n$. This implies uniqueness.
\end{proof}

In the Hilbert space setting, the least squares  method can be shown to be a special case of the general projection method \eqref{projmeth} upon appropriate choice of the spaces  $E_n$.


\begin{lemma}\label{characterization_leastsquares}
Let \eqref{nsp1} hold and let  $u_n$ be defined  by the least squares method  \eqref{leastsquares}. Assume that, for some $q\geq1$, the functional 
\eqref{PhiF} is strictly convex,
 the duality mappings  satisfy 
\[ \forall g\in F\, : \ J_{q^*}^{F^*\to F^{**}}(J_q^{F\to F^*}(g))=g \]
and  $J_q^{F\to F^*}$ is Gateaux differentiable at $(Au_n-\fdel)$ with Gateaux derivative \\
$G_n:F\to F^*$, $G_n= (J_q^{F\to F^*})'(Au_n-\fdel)$.

Then  we have equivalence of \eqref{leastsquares} and \eqref{projmeth} by considering the linear space $Z_n =G_nAE_n$.
\end{lemma}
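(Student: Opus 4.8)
The plan is to characterize the least squares minimizer $u_n$ by its first-order optimality condition and to show that this condition is exactly the general projection equation \eqref{projmeth} with the specific test space $Z_n=G_nAE_n$. First I would note that, under \eqref{nsp1} and strict convexity of $\Phi_F$, Lemma \ref{welldef_leastsquares} guarantees that $u_n$ is the unique minimizer of $\tilde u_n\mapsto\tfrac1q\normF{A\tilde u_n-\fdel}^q$ over the finite-dimensional subspace $E_n$. Since this cost functional is convex and differentiable along directions in $E_n$ (using Gateaux differentiability of $J_q^{F\to F^*}$ at $Au_n-\fdel$ together with the chain rule), the minimizer is characterized by the vanishing of the directional derivative in every direction $v_n\in E_n$. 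Computing this derivative via the chain rule, using that $\partial(\tfrac1q\normF{\cdot}^q)=J_q^{F\to F^*}$, yields the stationarity condition
\begin{equation}\label{stationarity}
\forall v_n\in E_n\,:\ \dupF{J_q^{F\to F^*}(Au_n-\fdel),Av_n}=0.
\end{equation}

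Next I would reinterpret \eqref{stationarity} as a general projection equation. The key observation is that $G_n=(J_q^{F\to F^*})'(Au_n-\fdel)$ maps $F$ into $F^*$, so for $w_n\in E_n$ the element $z_n:=G_nAw_n$ lies in $F^*$, and the candidate test space is $Z_n=G_nAE_n\subseteq F^*$. I would then verify that testing \eqref{stationarity} against the full space $E_n$ is equivalent to requiring the residual equation $\dupF{z_n,Au_n}=\dupF{z_n,\fdel}$ for all $z_n\in Z_n$; the two are related through the adjoint/bilinear structure of $G_n$ and the fact that $J_q^{F\to F^*}(Au_n-\fdel)$ pairs with $Av_n$. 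The condition $J_{q^*}^{F^*\to F^{**}}(J_q^{F\to F^*}(g))=g$ is what lets me move freely between $J_q^{F\to F^*}$ and its derivative-based reformulation, ensuring the pairing in \eqref{stationarity} is genuinely captured by the elements of $Z_n$ rather than losing information.

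For the converse direction, I would take any $u_n\in E_n$ solving \eqref{projmeth} with $Z_n=G_nAE_n$ and show it satisfies the stationarity condition \eqref{stationarity}, hence by convexity is the global minimizer; uniqueness on both sides (from Lemma \ref{welldef_leastsquares} and from \eqref{dim}, \eqref{nsp} for the projection method) then forces the two solutions to coincide, giving the claimed equivalence.

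I expect the main obstacle to be the precise identification of $Z_n=G_nAE_n$ as the correct test space. The subtlety is that \eqref{stationarity} pairs $J_q^{F\to F^*}(Au_n-\fdel)$—a single fixed functional—against $Av_n$, whereas \eqref{projmeth} pairs a whole family of test functionals $z_n$ against the residual $Au_n-\fdel$. Bridging these requires carefully exploiting that $G_n$ is the \emph{derivative} of $J_q^{F\to F^*}$ at the residual, so that the symmetry of the second-order structure (the Hessian-type pairing) lets the single stationarity functional be recovered from, and distributed across, the family $G_nAE_n$. Verifying that this test space has the right dimension, so that conditions \eqref{dim} and \eqref{nsp} of the general projection method are met and uniqueness can be invoked, is where the homogeneity and self-adjointness properties of the duality-mapping derivative must be used with care.
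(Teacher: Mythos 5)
Your first step is sound: under \eqref{nsp1} and strict convexity of $\Phi_F$, $u_n$ is the unique minimizer, and convexity makes the stationarity condition $\dupF{J_q^{F\to F^*}(Au_n-\fdel),Av_n}=0$ for all $v_n\in E_n$ both necessary and sufficient. But the step you yourself flag as ``the main obstacle'' --- converting this single fixed functional tested against $AE_n$ into the family of functionals $G_nAE_n$ tested against the fixed residual --- is the entire content of the lemma, and your plan does not close it. The symmetry of the ``Hessian-type pairing'' you invoke, i.e.\ $\dupF{G_nh,g}=\dupF{G_ng,h}$, is not among the hypotheses: $G_n$ is only the Gateaux derivative of $J_q^{F\to F^*}$ at the single point $Au_n-\fdel$, and second-order Gateaux derivatives at a point need not be symmetric without additional regularity (continuity of the derivative map, or Fr\'echet-type differentiability). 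If you did have symmetry you could finish: positive homogeneity of degree $q-1$ of $J_q^{F\to F^*}$ gives the Euler identity $G_n(Au_n-\fdel)=(q-1)J_q^{F\to F^*}(Au_n-\fdel)$, whence $\dupF{G_nAw_n,Au_n-\fdel}=\dupF{G_n(Au_n-\fdel),Aw_n}=(q-1)\dupF{J_q^{F\to F^*}(Au_n-\fdel),Aw_n}$, so the two conditions coincide for $q>1$. As it stands, however, this is an unproven and unassumed ingredient, so the bridge is a genuine gap. Note also that the hypothesis $J_{q^*}^{F^*\to F^{**}}(J_q^{F\to F^*}(g))=g$, which you mention only in passing, is precisely the tool the lemma supplies to close this gap, and your plan never actually deploys it.

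The paper's proof sidesteps the symmetry issue by never writing down your stationarity condition: it first rewrites the cost, via the identity $\normF{g}^{q-1}=\normFd{J_q^{F\to F^*}(g)}$, as $\Phi_{F^*}(J_q^{F\to F^*}(A\tilde u_n-\fdel))$ with $\Phi_{F^*}(z)=\tfrac{1}{q^*}\normFd{z}^{q^*}$ (the composite equals $\tfrac{1}{q^*}\normF{A\tilde u_n-\fdel}^{q}$, so the minimizers are unchanged), and only then differentiates. The chain rule places the direction inside the \emph{inner} derivative, producing $G_nAw_n\in F^*$ in the test-functional slot, while the outer gradient $\partial\Phi_{F^*}(J_q^{F\to F^*}(Au_n-\fdel))=J_{q^*}^{F^*\to F^{**}}(J_q^{F\to F^*}(Au_n-\fdel))=Au_n-\fdel$ collapses, by the assumed inversion identity, to the residual itself regarded as an element of $F$. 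Stationarity then reads $\dupF{G_nAw_n,Au_n-\fdel}=0$ for all $w_n\in E_n$, which is \eqref{projmeth} with $Z_n=G_nAE_n$ verbatim; since stationarity is necessary and sufficient by convexity, both directions of the equivalence follow at once, making your final appeal to \eqref{dim} and \eqref{nsp} for uniqueness of the projection solution unnecessary. In effect, the paper's chain-rule computation \emph{proves} exactly the partial symmetry you were hoping to assume, with $J_{q^*}^{F^*\to F^{**}}\circ J_q^{F\to F^*}=\mathrm{id}$ doing the work that Hessian symmetry cannot be guaranteed to do under the stated hypotheses.
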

\begin{proof}
Using the identity $\normF{f}^{q-1}=\normFd{J_q^{F\to F^*}(f)}$, and the functional 
\[
\Phi_{F^*}:z\mapsto\tfrac{1}{q^*}\normFd{z}^{q^*}
\]
we have that \eqref{leastsquares} is equivalent to 
\[
u_n\in\mbox{argmin}\{ \Phi_{F^*}(J_q^{F\to F^*}(A \tilde{u}_n-\fdel))\, : \, \tilde{u}_n\in E_n\}
\]
The necessary and, by convexity, also sufficient condition for this optimality problem reads as
\[
\begin{aligned}
\forall w_n\in E_n \, : \ 
0&=\tfrac{d}{d u_n} \left(\Phi_{F^*}(J_q^{F\to F^*}(A u_n-\fdel))\right)[w_n]\\
&=\dupF{ \tfrac{d}{d u_n} \left(J_q^{F\to F^*}(Au_n-\fdel)\right) [w_n], 
\partial \Phi_{F^*}(J_q^{F\to F^*}(A u_n-\fdel)}\\
&=\dupF{(J_q^{F\to F^*})'(Au_n-\fdel)A w_n , J_{q^*}^{F^*\to F^{**}}(J_q^{F\to F^*}(A u_n-\fdel)}\\
&=\dupF{(J_q^{F\to F^*})'(Au_n-\fdel)A w_n , A u_n-\fdel}\,,
\end{aligned}
\]
which is \eqref{projmeth} with $Z_n =(J_q^{F\to F^*})'(Au_n-\fdel)AE_n$.
\end{proof}
\begin{remark}
Since $u_n$ from the definition of the operator $G_n$ is unknown, Lemma \ref{characterization_leastsquares} is only of theoretical use. Later on, it will enable us to conclude convergence from the respective result for general projection methods - see Corollary \ref{conv_leastsquares_apriori} below. For practical computation of $u_n$, the finite dimensional minimization problem \eqref{leastsquares} should be solved. 
\end{remark}

Note that the equality $\forall g\in F\, : \ J_{q^*}^{F^*\to F^{**}}(J_q^{F\to F^*}(g))=g$ required by the previous lemma holds, e.g., in reflexive spaces, cf. \cite{Cioranescu90}.

\subsection{Stability}

For the least squares method, the crucial quantity  in the stability estimate is $\kappa_n$ defined as in \eqref{kappa_n}.
As in Remark \ref{kappafinite}, under the conditions of  Lemma \ref{welldef_leastsquares}, we have $\kappa_n<\infty$.
Therewith we obtain the following stability result.

\begin{lemma}\label{stability_leastsquares}
Let all the assumptions of Lemma \ref{characterization_leastsquares}  be satisfied and consider, for $f_1, f_2\in F$ the solutions of 
\[
u_{n,i}\in\mbox{argmin}\{ \normF{A \tilde{u}_n-f_i}\, : \, \tilde{u}_n\in E_n\}
\quad i=1,2\,. 
\]
Then the estimate 
\[
\normE{u_{n,1}-u_{n,2}}\leq \kappa_n \normF{Q_n'f_1-Q_n'f_2}
\]
holds, where 
$Q_n'$ is some single valued selection of the metric projection onto the subspace $AE_n$. If $Q_n'$ is continuous, then $u_{n,1}$ depends continuously on $f_1$. 
\end{lemma}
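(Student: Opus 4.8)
The plan is to exploit the reformulation of the least squares method recorded just before Lemma~\ref{characterization_leastsquares}, namely that a minimizer $u_n$ of \eqref{leastsquares} for right hand side $f$ is characterized by $Au_n = Q_n'f$, since as $\tilde{u}_n$ ranges over $E_n$ the image $A\tilde{u}_n$ ranges over $AE_n$, so that $Au_n$ is precisely the best approximation of $f$ in $AE_n$. Applying this to the two data $f_1,f_2$ gives $Au_{n,1}=Q_n'f_1$ and $Au_{n,2}=Q_n'f_2$, whence by linearity of $A$
\[
A(u_{n,1}-u_{n,2}) = Q_n'f_1 - Q_n'f_2.
\]
Before using these identities I would first confirm that they are unambiguous: the assumptions inherited from Lemma~\ref{characterization_leastsquares} include strict convexity of $\Phi_F$, which by Lemma~\ref{welldef_leastsquares} makes each $u_{n,i}$ unique and forces the metric projection onto the closed subspace $AE_n$ to be single valued, so that the same $Q_n'$ serves for both data.

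The core estimate is then immediate from the definition \eqref{kappa_n} of $\kappa_n$, which says $\normE{w_n}\leq\kappa_n\normF{Aw_n}$ for every $w_n\in E_n$. Since $E_n$ is linear we have $u_{n,1}-u_{n,2}\in E_n$, and taking $w_n=u_{n,1}-u_{n,2}$ together with the displayed identity yields
\[
\normE{u_{n,1}-u_{n,2}} \leq \kappa_n\normF{A(u_{n,1}-u_{n,2})} = \kappa_n\normF{Q_n'f_1-Q_n'f_2},
\]
which is the asserted bound. Here I would recall, as in Remark~\ref{kappafinite}, that $\kappa_n<\infty$ is guaranteed by \eqref{nsp1}, so the right hand side is meaningful.

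For the continuity claim I would simply combine this estimate with the assumed continuity of $Q_n'$: for a sequence $f_1^k\to f_1$ in $F$, continuity gives $\normF{Q_n'f_1^k-Q_n'f_1}\to0$, and applying the stability estimate to the pair $(f_1^k,f_1)$ shows $\normE{u_{n,1}^k-u_{n,1}}\leq\kappa_n\normF{Q_n'f_1^k-Q_n'f_1}\to0$, i.e.\ $u_{n,1}$ depends continuously on $f_1$. I do not expect a genuine obstacle in this proof; it reduces entirely to the reformulation $Au_n=Q_n'f$ and the definition of $\kappa_n$. The only point requiring care is the bookkeeping around uniqueness—ensuring that single-valuedness of both $u_{n,i}$ and $Q_n'$ is supplied by the standing hypotheses so that the identity $A(u_{n,1}-u_{n,2})=Q_n'f_1-Q_n'f_2$ holds with a common selection.
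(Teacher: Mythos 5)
Your proof is correct and follows essentially the same route as the paper, whose proof is the one-line observation that the estimate follows from the definition of $\kappa_n$ together with the identity $Au_{n,i}=Q_n'f_i$. Your write-up simply makes explicit the details the paper leaves implicit (single-valuedness of $Q_n'$ via strict convexity of $\Phi_F$, linearity of $E_n$, and the continuity argument), all of which are accurate.
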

\begin{proof}
The proof follows by the definition of $\kappa_n$ and the fact that $Au_{n,i}=Q_n'f_i$.
\end{proof}
\begin{remark}\label{Qpcontinuous}
The metric projection operator $Q_n'$ onto closed convex sets is single valued and continuous in uniformly convex Banach spaces (see, e.g., \cite{AN95}). Thus, the above result is applicable to the setting $F=L^p$ with $p\in (1,+\infty)$, but not to the space  $C(\overline{\Omega})$ in general. However, since the subspaces $AE_n$ are finite dimensional according to the rank-nullity theorem for linear mappings, one might work with continuous selections of the metric operators in this nonreflexive Banach space setting if those subspaces have certain properties - see, e.g.,  Theorem 6.34 in \cite{DS98}. More precisely, the metric projection $P_S$ onto an $n$-dimensional subspace $S$  of $C[a,b]$ admits a unique continuous selection if and only if every function $f\in S$, $f\neq 0$ has at most $n$ zeros and if every $f\in S$ has at most $n-1$ changes of sign. 
\end{remark}

\subsection{Convergence with a priori choice of $n$}

Together with Lemma \ref{characterization_leastsquares}, Theorem \ref{conv_projmeth_apriori} immediately implies convergence of the least squares method.
\begin{corollary}\label{conv_leastsquares_apriori}
Let   all the assumptions of Lemma \ref{characterization_leastsquares} be satisfied. 
Additionally, we assume that there exists a sequence of approximations $(\hat{u}_n)_{n\in\N}$, $\hat{u}_n\in E_n$, satisfying \eqref{convPn}, \eqref{bd_conv}, where $\tilde{\kappa}_n$ is defined as in \eqref{kappatilde} with $Z_n =(J_q^{F\to F^*})'(Au_n-\fdel)AE_n$.

Then for exact data $\delta=0$ we have convergence as $n\to\infty$
\[\normE{u_n-\usol}\to0 \mbox{ as }n\to\infty\,.\]
For noisy data and with the dimension $n=n(\delta)$ chosen according to \eqref{aprioristoppingrule}, we have convergence as $\delta\to0$ 
\[\normE{u_{n(\delta)}-\usol}\to0 \mbox{ as }\delta\to0\,.\]
\end{corollary}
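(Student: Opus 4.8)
The plan is to derive the corollary as a direct consequence of the characterization in Lemma \ref{characterization_leastsquares} combined with the a priori convergence result for the general projection method, Theorem \ref{conv_projmeth_apriori}. First I would record that, under the standing assumptions of Lemma \ref{characterization_leastsquares} (in particular \eqref{nsp1} and strict convexity of the functional \eqref{PhiF}), the least squares problem \eqref{leastsquares} has a unique minimizer $u_n$ by Lemma \ref{welldef_leastsquares}. For this \emph{fixed} $u_n$ the space $Z_n=(J_q^{F\to F^*})'(Au_n-\fdel)AE_n$ is completely determined, and Lemma \ref{characterization_leastsquares} then asserts that $u_n$ satisfies the general projection equations \eqref{projmeth} for exactly this choice of $Z_n$.

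The key observation is that the proof of Theorem \ref{conv_projmeth_apriori} is entirely local in $n$: the central estimate \eqref{est_conv_projmeth} is derived using only the defining projection identity $\dupF{z_n,Au_n}=\dupF{z_n,\fdel}$ for all $z_n\in Z_n$, the definition \eqref{kappatilde} of $\tilde{\kappa}_n$ in terms of $Z_n$, and linearity of $A$ and of $E_n$. Since all of these ingredients remain available for the solution-dependent $Z_n$ just constructed, I would reproduce \eqref{est_conv_projmeth} verbatim with $w_n=\hat{u}_n$, obtaining
\[
\normE{u_n-\usol}\leq\normE{\usol-\hat{u}_n}+\tilde{\kappa}_n\Bigl(\delta+\sup_{z_n\in Z_n,\normFd{z_n}=1}\dupF{z_n,A(\usol-\hat{u}_n)}\Bigr),
\]
where $\tilde{\kappa}_n$ is the stability factor associated with the present $Z_n$, precisely as in the hypothesis of the corollary. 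From here the conclusion is immediate: for exact data ($\delta=0$) the right-hand side tends to zero by \eqref{convPn} and \eqref{bd_conv}, while for noisy data the a priori choice \eqref{aprioristoppingrule} guarantees both $\tilde{\kappa}_{n(\delta)}\delta\to0$ and $n(\delta)\to\infty$, so that the approximation terms again vanish via \eqref{convPn} and \eqref{bd_conv}. This yields the two claimed limits.

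The main obstacle, and the reason this is phrased as a corollary rather than a literal specialization, is that $Z_n$ depends on the unknown $u_n$ through the Gateaux derivative $G_n$. Consequently Theorem \ref{conv_projmeth_apriori} cannot be invoked as a black box, since its statement is formulated for spaces $Z_n$ fixed in advance. The care required is therefore to verify that the inequality chain in \eqref{est_conv_projmeth} never exploits more than the projection identity at the single point $u_n$ and the definition of $\tilde{\kappa}_n$ for the associated $Z_n$ -- both of which survive the solution dependence. I would also confirm, as in Remark \ref{kappafinite}, that $\tilde{\kappa}_n$ is finite: once $u_n$ is fixed the supremum in \eqref{kappatilde} is taken over the unit sphere of the finite-dimensional space $E_n$, and nondegeneracy of the denominator is inherited from the construction $Z_n=G_nAE_n$ together with \eqref{nsp1}. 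This finiteness is implicit in the hypothesis \eqref{bd_conv}, so no further assumption is needed beyond those already listed.
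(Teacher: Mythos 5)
Your proposal is correct and follows essentially the same route as the paper, which proves the corollary simply by citing Lemma \ref{characterization_leastsquares} together with Theorem \ref{conv_projmeth_apriori}. Your additional care about the solution-dependence of $Z_n=(J_q^{F\to F^*})'(Au_n-\fdel)AE_n$ --- checking that the estimate \eqref{est_conv_projmeth} uses only the projection identity at the single point $u_n$ and the definition \eqref{kappatilde} for that particular $Z_n$, whose finiteness is anyway implicit in hypothesis \eqref{bd_conv} --- is a legitimate subtlety that the paper's one-line proof glosses over, and it strengthens rather than alters the argument.
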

Alternatively, we can also prove convergence directly:
\begin{theorem}\label{conv_leastsquares_apriori_gen}
Let condition \eqref{nsp1} be satisfied for all $n\in\N$.
Then an approximation $u_n$ according to the least squares method \eqref{leastsquares} exists 
and the error estimate
\be{ls_error}
\normE{u_n-\usol}\leq \inf_{w_n \in E_n} \{\normE{\usol-w_n
} +2 \kappa_n \normF{A\usol-Aw_n}\}+2\kappa_n \delta
\ee
holds.
If there exists a sequence of approximations $(\hat{u}_n)_{n\in\N}$, $\hat{u}_n\in E_n$, satisfying 
\eqref{convPn} and
\be{kappa*appr}
\kappa_n\normF{A(\usol-\hat{u}_n)}\to0 
 \mbox{ as } n\to\infty,
\ee 
then we have in case of exact data ($\delta=0$) convergence
\[\normE{u_n-\usol}\to0 \mbox{ as }n\to\infty\,\]
and in case of noisy data with the choice of $n=n(\delta)$ according to 
\be{n_choice}
n(\delta)\to\infty\mbox{ and }\kappa_{n(\delta)} \delta \to0 \mbox{ as }\delta\to0 
\ee
convergence as $\delta\to0$: 
\[\normE{u_{n(\delta)}-\usol}\to0 \mbox{ as }\delta\to0\,.\]
\end{theorem}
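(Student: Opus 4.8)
The plan is to handle the three assertions in turn: existence, the quantitative error bound \eqref{ls_error}, and then convergence by specializing that bound. Existence of a minimizer $u_n$ is immediate, since condition \eqref{nsp1} is precisely the hypothesis of Lemma \ref{welldef_leastsquares}, whose conclusion already guarantees a nonempty set of minimizers. So the only genuine content lies in establishing \eqref{ls_error} and then reading off the limits.

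For the error estimate, I would fix an arbitrary $w_n\in E_n$ and split via the triangle inequality as $\normE{u_n-\usol}\leq\normE{\usol-w_n}+\normE{u_n-w_n}$. The point of introducing $w_n\in E_n$ is that the difference $u_n-w_n$ again lies in $E_n$ (using linearity of the subspace), so the definition \eqref{kappa_n} of $\kappa_n$ applies to give $\normE{u_n-w_n}\leq\kappa_n\normF{Au_n-Aw_n}$. The decisive step is then to bound $\normF{Au_n-Aw_n}$: writing it as $\normF{Au_n-\fdel}+\normF{\fdel-Aw_n}$ and invoking the minimizing property $\normF{Au_n-\fdel}\leq\normF{Aw_n-\fdel}$ produces the factor $2$, namely $\normF{Au_n-Aw_n}\leq 2\normF{Aw_n-\fdel}$. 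A further triangle inequality together with $A\usol=f$ and the noise bound \eqref{delta} gives $\normF{Aw_n-\fdel}\leq\normF{A\usol-Aw_n}+\delta$. Assembling these and then taking the infimum over $w_n\in E_n$ yields \eqref{ls_error} exactly.

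With \eqref{ls_error} in hand, convergence follows by inserting the particular sequence $w_n=\hat{u}_n$, which is admissible since $\hat{u}_n\in E_n$. In the exact-data case $\delta=0$ the bound reduces to $\normE{u_n-\usol}\leq\normE{\usol-\hat{u}_n}+2\kappa_n\normF{A(\usol-\hat{u}_n)}$, whose first term tends to zero by \eqref{convPn} and whose second term tends to zero by \eqref{kappa*appr}. For noisy data with the parameter choice \eqref{n_choice}, substituting $n=n(\delta)$ gives three terms: $\normE{\usol-\hat{u}_{n(\delta)}}\to0$ by \eqref{convPn}, $\kappa_{n(\delta)}\normF{A(\usol-\hat{u}_{n(\delta)})}\to0$ by \eqref{kappa*appr} (both using $n(\delta)\to\infty$), and $2\kappa_{n(\delta)}\delta\to0$ directly by \eqref{n_choice}.

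I do not expect a serious obstacle here, since the argument is essentially a disciplined chain of triangle inequalities bolstered by the single structural fact that $\kappa_n$ controls the $E$-norm of elements of $E_n$ by their image norm. The one place demanding care is the emergence of the constant $2$: it must be traced cleanly to the optimality inequality rather than smuggled in, and one should keep track of which norms are being compared in $F$ versus $E$ so that $\kappa_n$ is applied only to elements of $E_n$. Once that bookkeeping is correct, passing to the infimum and then to the limit is routine.
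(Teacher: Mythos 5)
Your proposal is correct and follows essentially the same route as the paper's proof: the same triangle-inequality split with arbitrary $w_n\in E_n$, the same use of $\kappa_n$ applied to $u_n-w_n\in E_n$, the factor $2$ obtained from the least squares optimality $\normF{Au_n-\fdel}\leq\normF{Aw_n-\fdel}$, and convergence by inserting $w_n=\hat{u}_n$ under \eqref{convPn}, \eqref{kappa*appr} and \eqref{n_choice}. Your explicit appeal to Lemma \ref{welldef_leastsquares} for existence is a point the paper leaves implicit, but otherwise the arguments coincide.
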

\begin{proof}
Let $w_n\in E_n$ be arbitrary. We have $\normF{Au_n -\fdel} \leq \normF{Aw_n -\fdel}$
due to the least squares property \eqref{leastsquares},  
therefore
\[
\begin{aligned}
\normF{Au_n-Aw_n} &\leq \normF{Au_n -\fdel} + \normF{Aw_n -\fdel} \\
&\leq 2 \normF{Aw_n -\fdel} \leq 2 (\normF{Aw_n -f} + \delta)
\end{aligned}
\]
and
\[
\begin{aligned}
\normE{u_n-\usol}&\leq\normE{\usol-w_n}+\normE{u_n-w_n}\\
&\leq\normE{\usol-w_n}+ \kappa_n \normF{Au_n-Aw_n}\\
&\leq\normE{\usol-w_n}+ 2\kappa_n(\normF{Aw_n-f}+\delta).
\end{aligned}
\]
In this error estimate $w_n \in E_n$ is arbitrary, hence \eqref{ls_error} holds. If some sequence of approximations $(\hat{u}_n)_{n\in\N}$ satisfies conditions \eqref{convPn} and \eqref{kappa*appr}, then
insertion of $w_n=\hat{u}_n$ into \eqref{ls_error} gives the convergence assertions. 
\end{proof}

\begin{remark} \label{LSconvcond}
Note that convergence condition \eqref{kappa*appr} is satisfied, if some sequence of  operators $\{P_n:E\to E_n\,, \ n \in \N\}$ satisfies conditions \eqref{P_n}, \eqref{convPPn} and
\be{kappa*appr0}
\exists C<\infty \, \forall n\in\N \, : \, 
\kappa_n \sup_{w\in H,\normE{w}=1}\normF{A (I-P_n)w} \leq C
 \ee
(compare \eqref{bd}, \eqref{bd1}). Namely,  the equality $I-P_n=(I-P_n)^2$ allows to estimate $\kappa_n \normF{A(I-P_n)\usol} \leq C\normE{(I-P_n)\usol}.$

\end{remark}

\subsection{Convergence with a posteriori choice of $n$ -- the discrepancy principle}

\begin{theorem}\label{conv_leastsquares_dp}
Let for all $n\in\N$ condition \eqref{nsp1} be satisfied so that $u_n$ according to the least squares method \eqref{leastsquares} exists.
Additionally, we assume that there exists a sequence of approximations $(\hat{u}_n)_{n\in\N}$, $\hat{u}_n\in E_n$, satisfying \eqref{convPn} and the condition
\be{kappa2*appr0_conv}
\kappa_{n} (\normF{A (\usol-\hat{u}_n)} + \normF{A (\usol-\hat{u}_{n-1})})\ \to 0 \mbox{ as } n\to\infty
\ee
Let $b>1$ be fixed and for $\delta>0$, let $n=n_{DP}(\delta)$ be the first index such that
\eqref{nDP}
holds.

Then for $\delta>0$ we have that $n_{DP}(\delta)$ is finite. 
Moreover, $u_{n_{DP}(\delta)}\to\usol$ as $\delta\to0$ subsequentially.
\end{theorem}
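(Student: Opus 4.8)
The plan is to mirror the proof of Theorem~\ref{conv_projmeth_dp}, replacing the general projection estimate \eqref{est_conv_projmeth} by the least squares error estimate \eqref{ls_error}. First I would establish that $n_{DP}(\delta)$ is finite. Using the least squares optimality \eqref{leastsquares} with the comparison element $\hat{u}_n\in E_n$, together with $A\usol=f$ and \eqref{delta}, one gets $d_{DP}(n)=\normF{Au_n-\fdel}\leq\normF{A\hat{u}_n-\fdel}\leq\normF{A(\usol-\hat{u}_n)}+\delta$. Since \eqref{convPn} and boundedness of $A$ force $\normF{A(\usol-\hat{u}_n)}\to0$, and since $b>1$, the right-hand side eventually drops below $b\delta$; hence the stopping index is attained.

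For convergence I would argue subsequentially along an arbitrary sequence $\delta_k\to0$, writing $n_k=n_{DP}(\delta_k)$. If $(n_k)$ has a bounded subsequence, the corresponding iterates lie in a fixed finite dimensional subspace (the linear hull of finitely many $E_n$), and \eqref{ls_error} with $w_n=\hat{u}_n$ keeps them bounded, so relative compactness yields a convergent sub-subsequence; the discrepancy bound $\normF{Au_{n_k}-f^{\delta_k}}\leq b\delta_k\to0$ then forces its limit to solve \eqref{Auf}. If instead $n_k\to\infty$, I would exploit that at $m=n_k-1$ the criterion \eqref{nDP} failed, so $b\delta_k<d_{DP}(m)\leq\normF{A(\usol-\hat{u}_{m})}+\delta_k$, which gives the crucial inequality $(b-1)\delta_k<\normF{A(\usol-\hat{u}_{n_k-1})}$.

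Inserting $w_n=\hat{u}_{n_k}$ into \eqref{ls_error} produces the three terms $\normE{\usol-\hat{u}_{n_k}}$, $\kappa_{n_k}\normF{A(\usol-\hat{u}_{n_k})}$ and $\kappa_{n_k}\delta_k$; the first tends to zero by \eqref{convPn}, while the crucial inequality converts the last into $\kappa_{n_k}\delta_k<(b-1)^{-1}\kappa_{n_k}\normF{A(\usol-\hat{u}_{n_k-1})}$, so the latter two are annihilated by the two summands of hypothesis \eqref{kappa2*appr0_conv}, yielding $u_{n_k}\to\usol$. The main obstacle, exactly as in the general projection case, is the control of $\kappa_{n_k}\delta_k$: since $\kappa_n$ may blow up one cannot invoke an a priori bound, and it is precisely the combination of the a posteriori lower bound coming from the failure of the discrepancy principle at $m=n_k-1$ with the joint convergence condition \eqref{kappa2*appr0_conv} (which is why both $\hat{u}_n$ and $\hat{u}_{n-1}$ appear there) that closes the argument. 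Combining the two cases gives subsequential convergence to a solution of \eqref{Auf}, which reduces to $\normE{u_{n_{DP}(\delta)}-\usol}\to0$ whenever the solution is unique.
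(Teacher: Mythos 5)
Your proposal is correct and takes essentially the same route as the paper: its proof of this theorem explicitly instructs to repeat the argument of Theorem~\ref{conv_projmeth_dp} with \eqref{est_discr} trivialized to $\tau=0$ by least squares optimality, the resulting \eqref{estQmp} (with $m=n_{DP}(\delta)-1$) supplying the lower bound $(b-1)\delta<\normF{A(\usol-\hat{u}_{m})}$, and \eqref{ls_error} with $w_n=\hat{u}_n$ replacing \eqref{est_conv_projmeth} — exactly the steps you carry out, including the finite-dimensional compactness argument for bounded stopping indices. Your direct estimate $d_{DP}(n)\leq\normF{A\hat{u}_n-\fdel}\leq\normF{A(\usol-\hat{u}_n)}+\delta$ is precisely the ``trivial'' form of \eqref{est_discr} the paper alludes to, so there is no substantive difference.
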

\begin{proof}
The proof is the same as for the Theorem \ref{conv_projmeth_dp}, but \eqref{est_discr} with $\tau=0$ is now trivial, using optimality of $u_n$ for the minimization problem \eqref{leastsquares}, and hence $\tau$ can be omitted in  formula \eqref{estQmp} (with $m=n_{DP}(\delta)-1)$). Note also that  we do not need now relations \eqref{kappa_tau_n} and that estimate \eqref{ls_error} with $w_n=\hat{u}_n$ is used instead of \eqref{est_conv_projmeth}. 
\end{proof}

\begin{remark}
If some sequence of  operators $\{P_n:E\to E_n\,, \ n \in \N\}$ satisfies \eqref{P_n} and \eqref{convPPn}, then the uniform boundedness condition
\be{kappa2*appr0}
\exists C<\infty \, \forall n\in\N \, : \, (\kappa_{n} +\kappa_{n+1}) \sup_{w\in H,\normE{w}=1}\normF{A (I-P_n)w} \leq C.
\ee
is sufficient for \eqref{kappa2*appr0_conv}.
\end{remark}

\section{The least error method}\label{sec_leasterror}

Throughout this section, $Z_n\subseteq F^*$ is a finite dimensional subspace. We establish  well-definedness and convergence of the least error method to a solution  under a priori and a posteriori choices for the discretization dimension.

\subsection{Well-definedness}
\begin{lemma}\label{welldef_leasterror}
\begin{enumerate}
\item[(i)]
Let $E$ be a Banach space in which the unit ball is weakly compact and assume that 
\be{nspd}
\mathcal{N}(A^*)\cap Z_n =\{0\}\,.
\ee

Then the set of minimizers $\mbox{argmin}\{ \normE{\tilde{u}} \, : \, \forall z_n\in Z_n \, : \ \dupF{z_n,A \tilde{u}}=\dupF{z_n,\fdel}\}$ is nonempty. 

\item[(ii)]
If additionally for some $q\geq1$, the functional 
\be{PhiH}
\Phi_E:\tilde{u}\mapsto\tfrac{1}{q}\normE{\tilde{u}}^q
\ee 
is strictly convex, then the minimizer $u_n$ of \eqref{leasterror} is unique, and so is the minimum-norm-solution $\udag$ of \eqref{Auf}.
\end{enumerate}
\end{lemma}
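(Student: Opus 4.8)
The plan is to read \eqref{leasterror} as the minimization of $\normE{\cdot}$ over the feasible set
$C=\{\tilde u\in E:\ Q_nA\tilde u=Q_n\fdel\}$, where I have rewritten the interpolation constraints $\dupF{z_n,A\tilde u}=\dupF{z_n,\fdel}$ (for all $z_n\in Z_n$) in the compact form already used in \eqref{projmethQn}. The substantial task is to verify that $C$ is nonempty; once this is known, existence is the standard direct method and uniqueness is a routine strict-convexity argument.

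For nonemptiness I would show that the linear map $Q_nA:E\to Z_n^*$ is surjective under \eqref{nspd}, so that in particular $Q_n\fdel$ lies in its range. Since $Z_n$ is finite dimensional, so is $Z_n^*$, and a subspace of $Z_n^*$ fills out all of $Z_n^*$ precisely when its annihilator in $Z_n^{**}\cong Z_n$ is trivial. Now an element $z_n\in Z_n$ annihilates the range of $Q_nA$ exactly when $\dupZn{Q_nA\tilde u,z_n}=0$ for all $\tilde u\in E$; and $\dupZn{Q_nA\tilde u,z_n}=\dupF{z_n,A\tilde u}=\dupE{A^*z_n,\tilde u}$, which vanishes for every $\tilde u$ iff $A^*z_n=0$. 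Hence the annihilator equals $\mathcal N(A^*)\cap Z_n$, which is $\{0\}$ by \eqref{nspd}, so $Q_nA$ is onto and $C\neq\emptyset$. This is the one place where \eqref{nspd} is genuinely needed, and it is where I expect the main (though modest) difficulty, since it rests on correctly identifying the annihilator of the range with $\mathcal N(A^*)\cap Z_n$ through the adjoint.

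With $C$ nonempty, observe that it is closed (continuity of $Q_nA$) and convex, being the affine preimage $(Q_nA)^{-1}(Q_n\fdel)$, i.e.\ a coset of $\mathcal N(Q_nA)$. To produce a minimizer I would take a minimizing sequence $(\tilde u^k)\subseteq C$ for $\normE{\cdot}$; it is bounded, so by the assumed weak compactness of the unit ball (equivalently, by Eberlein--\v{S}mulian, weak sequential compactness) a subsequence converges weakly to some $u_*\in E$. Since $C$ is closed and convex it is weakly closed by Mazur's theorem, whence $u_*\in C$, and weak lower semicontinuity of the norm gives $\normE{u_*}\le\liminf_k\normE{\tilde u^k}=\inf_{\tilde u\in C}\normE{\tilde u}$ along the subsequence; thus $u_*$ is a minimizer, proving (i).

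For (ii), since $t\mapsto\tfrac1q t^q$ is strictly increasing on $[0,\infty)$ for $q\ge1$, minimizing $\normE{\cdot}$ over $C$ is equivalent to minimizing $\Phi_E$ from \eqref{PhiH} over $C$, i.e.\ the two problems have identical sets of minimizers. If $u_1\neq u_2$ were two minimizers, then $\tfrac12(u_1+u_2)\in C$ by convexity of $C$, and strict convexity of $\Phi_E$ would force $\Phi_E(\tfrac12(u_1+u_2))<\tfrac12\Phi_E(u_1)+\tfrac12\Phi_E(u_2)=\min_{C}\Phi_E$, a contradiction; hence $u_n$ is unique. The minimum-norm solution $\udag$ of \eqref{Auf} is treated identically: its feasible set $\{\tilde u\in E:A\tilde u=f\}$ is nonempty (it contains $\usol$), closed and convex, so the same direct-method argument yields existence and the same strict-convexity argument yields uniqueness.
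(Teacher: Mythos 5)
Your proposal is correct and follows essentially the same route as the paper: nonemptiness of the feasible set via surjectivity of $Q_nA:E\to Z_n^*$ under \eqref{nspd} (the paper routes this annihilator computation through the Closed Range Theorem, noting the finite-dimensional range is closed, which in this setting reduces exactly to your elementary fact that a subspace of a finite-dimensional space with trivial annihilator is everything, combined with the same identification $(Q_nA)^*z_n=A^*z_n$), followed by the direct method using weak compactness of the unit ball and weak lower semicontinuity of the norm, and a strict-convexity argument for uniqueness of both $u_n$ and $\udag$. There are no gaps; your sequential formulation via Eberlein--\v{S}mulian and Mazur is an equivalent rendering of the paper's appeal to weakly compact level sets.
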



\begin{proof}
Condition \eqref{nspd} implies that the admissible set $E_n^{ad}=
\{ \tilde{u}\in E \, : \, \forall z_n\in Z_n \, : \ \dupF{z_n,A \tilde{u}}=\dupF{z_n,\fdel}\}$ is nonempty.
To see this, we apply the Closed Range Theorem to the linear operator $Q_nA:E\to Z_n^*$, whose finite dimensional range is obviously closed. Hence we have the identity 
\[
\begin{aligned}
Q_nAE=&\mathcal{N}((Q_nA)^*)^\bot\\
=&\{g_n\in Z_n^* \, \vert \, \forall z_n\in Z_n\, : \  (Q_nA)^*z_n=0\ \Rightarrow \ \dupZn{g_n,z_n}=0\}\\
=&\{g_n\in Z_n^* \, \vert \, \forall z_n\in Z_n\, : \  A^*z_n=0\ \Rightarrow \ \dupZn{g_n,z_n}=0\}\\
=& Z_n^*
\end{aligned}
\]
under condition \eqref{nspd}, since by definition of $Q_n$ we have $Q_n^*z_n=z_n$ for all $z_n\in Z_n$:
\[
\forall g\in F: \dupF{Q_n^*z_n,g}=\dupZn{Q_ng,z_n}=\dupF{z_n,g}.
\]
Thus, the equation $Q_nA\tilde{u}=f_n$ with $f_n=Q_n\fdel\in Z_n^*$ defining $E_n^{ad}$ is always solvable under condition \eqref{nspd}.

Due to our assumption on $E$, level sets of the cost function $j:\tilde{u}\mapsto \normE{\tilde{u}}$ are weakly compact. Moreover, $j$ is weakly lower semicontinuous and bounded from below. This implies existence of a minimizer, which in case of strict convexity of the cost function $\tfrac{1}{q}j^q=\Phi_E$ is obviously unique.
\end{proof}

Also the least error method is to some extent a special case of \eqref{projmeth}. However, different from the Hilbert space situation, the ansatz space might be nonlinear in general Banach spaces.

\begin{lemma}\label{characterization_leasterror}
Let the conditions of Lemma \ref{welldef_leasterror} (i) be satisfied and let additionally, for some $q>1$, the norm functional $u\mapsto\tfrac{1}{q}\normE{u}^q$ be Frechet differentiable, and the single valued duality mapping $J_q^{E\to E^*}$  be invertible. 

Then \eqref{leasterror} and \eqref{projmeth} are equivalent, when $E_n=(J_q^{E\to E^*})^{-1}(A^*Z_n )$.
\end{lemma}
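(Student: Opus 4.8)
The plan is to characterize the least error solution \eqref{leasterror} by a first–order optimality condition and then rewrite that condition in the form \eqref{projmeth}. Since $t\mapsto\tfrac{1}{q}t^q$ is strictly increasing on $[0,\infty)$, minimizing $\normE{\tilde u}$ over the admissible set is equivalent to minimizing $\Phi_E(\tilde u)=\tfrac{1}{q}\normE{\tilde u}^q$ over it. By Lemma \ref{welldef_leasterror}(i) this admissible set is a nonempty affine subspace: fixing any feasible $\bar u$, we have $E_n^{ad}=\bar u+\mathcal{N}(Q_nA)$, where $\mathcal{N}(Q_nA)=\{v\in E\,:\,\forall z_n\in Z_n\,:\,\dupF{z_n,Av}=0\}$. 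First I would exploit that $\Phi_E$ is convex and, by hypothesis, Frechet differentiable with derivative $J_q^{E\to E^*}$, so that minimization over an affine subspace is characterized by stationarity along the associated linear subspace.

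Concretely, the claim is that $u_n$ solves \eqref{leasterror} if and only if $u_n\in E_n^{ad}$ and
\be{le_stationarity}
\forall v\in\mathcal{N}(Q_nA)\,:\ \dupE{J_q^{E\to E^*}(u_n),v}=0,
\ee
i.e. $J_q^{E\to E^*}(u_n)\in\mathcal{N}(Q_nA)^\perp$. Necessity follows by differentiating $t\mapsto\Phi_E(u_n+tv)$ at $t=0$ for $v\in\mathcal{N}(Q_nA)$, which are admissible perturbations since the constraint is affine; sufficiency follows from the subgradient inequality $\Phi_E(\tilde u)\geq\Phi_E(u_n)+\dupE{J_q^{E\to E^*}(u_n),\tilde u-u_n}$ combined with $\tilde u-u_n\in\mathcal{N}(Q_nA)$ for every admissible $\tilde u$.

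It then remains to identify $\mathcal{N}(Q_nA)^\perp$. Here I would use the annihilator--range duality $\mathcal{N}(Q_nA)^\perp=\overline{\mathcal{R}((Q_nA)^*)}$ and observe that $\mathcal{R}((Q_nA)^*)$ is finite dimensional, hence closed, so the closure is superfluous. Writing $(Q_nA)^*=A^*Q_n^*$ and invoking the already established identity $Q_n^*z_n=z_n$ for $z_n\in Z_n$ (under the canonical identification $Z_n^{**}\cong Z_n$), one obtains $\mathcal{R}(Q_n^*)=Z_n$ and therefore $\mathcal{N}(Q_nA)^\perp=A^*Z_n$. Condition \eqref{le_stationarity} thus reads $J_q^{E\to E^*}(u_n)\in A^*Z_n$, which by invertibility of $J_q^{E\to E^*}$ is exactly $u_n\in(J_q^{E\to E^*})^{-1}(A^*Z_n)=E_n$. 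Since membership in $E_n^{ad}$ is precisely the projection identity $\forall z_n\in Z_n:\dupF{z_n,Au_n}=\dupF{z_n,\fdel}$, the conjunction $u_n\in E_n$ and $u_n\in E_n^{ad}$ is exactly \eqref{projmeth}, giving the claimed equivalence.

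The main obstacle I anticipate is the inclusion $\mathcal{N}(Q_nA)^\perp\subseteq A^*Z_n$: the reverse inclusion is immediate from $\dupE{A^*z_n,v}=\dupF{z_n,Av}=0$ for $v\in\mathcal{N}(Q_nA)$, but the forward inclusion rests on closedness of $\mathcal{R}((Q_nA)^*)$, guaranteed here only by finite dimensionality of $Z_n$, together with the adjoint computation $\mathcal{R}((Q_nA)^*)=A^*Z_n$. A point worth recording is that, unlike in the Hilbert space case, $E_n=(J_q^{E\to E^*})^{-1}(A^*Z_n)$ is generally a nonlinear subset of $E$, because the duality mapping is nonlinear; hence \eqref{projmeth} must here be read with a nonlinear ansatz set.
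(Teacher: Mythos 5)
Your proof is correct, and it reaches the equivalence by a route that is recognizably different in its packaging from the paper's. The paper argues via Lagrange multipliers: fixing a basis $\{z_{n,i}\}$ of $Z_n$, stationarity of the Lagrangian yields $J_q^{E\to E^*}(u_n)+A^*v_n=0$ with $v_n\in Z_n$ together with feasibility, and it then adds a separate variational proof of the implication \eqref{projmeth} $\Rightarrow$ \eqref{leasterror} based on the duality mapping identities $\dupE{J_q^{E\to E^*}(u_n),u_n}=\normE{u_n}^q=\normEd{A^*v_n}\normE{u_n}$. You avoid multipliers entirely: you characterize minimizers of the convex, Frechet differentiable $\Phi_E$ over the affine admissible set $\bar u+\mathcal{N}(Q_nA)$ by the stationarity condition $J_q^{E\to E^*}(u_n)\in\mathcal{N}(Q_nA)^\perp$ (necessity by directional derivatives, sufficiency by the subgradient inequality), and then identify $\mathcal{N}(Q_nA)^\perp=A^*Z_n$ via range--annihilator duality, using $Q_n^*z_n=z_n$ and $Z_n^{**}\cong Z_n$. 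This is precisely the duality content that the paper compresses into the unproved assertion that a Lagrange multiplier exists, so your version makes explicit a step the paper leaves implicit --- a genuine gain, since multiplier existence for equality constraints in Banach spaces does rest on a closed-range-type fact; and your subgradient sufficiency argument does the work of the paper's alternative variational proof in one line. One small caveat on your ``main obstacle'': the identity $\mathcal{N}(T)^\perp=\overline{\mathcal{R}(T^*)}$ with the \emph{norm} closure is false in general Banach spaces (the general statement requires the weak-$*$ closure). The clean justification here is to apply the Closed Range Theorem to $T=Q_nA$ itself: $\mathcal{R}(Q_nA)\subseteq Z_n^*$ is finite dimensional, hence closed, whence $\mathcal{R}((Q_nA)^*)=\mathcal{N}(Q_nA)^\perp$ exactly --- the same tool the paper already deploys in the proof of Lemma \ref{welldef_leasterror}. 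Since in your setting the range $A^*Z_n$ is finite dimensional and therefore also weak-$*$ closed, your conclusion is unaffected; your final remark on the nonlinearity of $E_n=(J_q^{E\to E^*})^{-1}(A^*Z_n)$ matches the paper's own observation.
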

\begin{proof}
By convexity and Frechet differentiability of the cost function as well as linearity of the constraints, optimality in \eqref{leasterror} is equivalent to existence of a Lagrange multiplier $\lambda\in {\mathbb{R}}^{m_n}$ with $m_n=dim\, Z_n$ such that stationarity for the Lagrange function 
\[
\begin{aligned}
L:H\times  {\mathbb{R}}^{m_n} \rightarrow&\mathbb{R}\\
(\tilde{u},\lambda)\mapsto&\tfrac{1}{q}\normE{\tilde{u}}^q +\sum_{i=1}^{m_n} \lambda_i\dupF{z_{n,i},A \tilde{u}-\fdel}\\
&=\tfrac{1}{q}\normE{\tilde{u}}^q + \dupE{A^*(\sum_{i=1}^{m_n}\lambda_i z_{n,i}),\tilde{u}}-\dupF{\sum_{i=1}^{m_n}\lambda_i z_{n,i},\fdel}
\end{aligned}
\]
holds, where $Z_n=span\{z_{n,i}, i\in\{1,2,...,m_n\}\}$. That is, there exists $\bar \lambda\in {\mathbb{R}}^{m_n}$  such that 
\[
\begin{aligned}
J_q^{E\to E^*}(u_n)+A^*v_n =0 \mbox{ and }\\
\forall z_n\in Z_n \, : \ \dupF{z_n,A u_n-\fdel}=0,
\end{aligned}
\]
where $v_n=\sum_{i=1}^{m_n}\bar\lambda_iz_{n,i}$.
The first of these two equations with invertibility of $J_q^{E\to E^*}$ yields that \eqref{leasterror} is equivalent to \eqref{projmeth} with $E_n=(J_q^{E\to E^*})^{-1}(A^*Z_n)$.\\
The  implication \eqref{projmeth} $\Rightarrow$\eqref{leasterror}  can be shown also in a variational manner, by exploiting duality mapping properties. We include the alternative proof here, for the sake of completeness. Thus, assume that $u_n$ satisfies \eqref{projmeth} with $E_n=(J_q^{E\to E^*})^{-1}(A^*Z_n)$ and let $u$ be an arbitrary element of the feasible set $E_n^{ad}=
\{ \tilde{u}\in E \, : \, \forall z_n\in Z_n \, : \ \dupF{z_n,A \tilde{u}}=\dupF{z_n,\fdel}\}$.
Then we can write $u_n=(J_q^{E\to E^*})^{-1}(A^*v_n)$ for some $v_n$ and insert $z_n=v_n$ to obtain the identity $\dupF{v_n,Au_n}=\dupF{v_n,\fdel}$, which together with feasibility of $u$ yields 
\[
\begin{aligned}
&\dupE{J_q(u_n),u_n}
=\dupE{A^*v_n,u_n}
=\dupF{v_n,Au_n}\\
&=\dupF{v_n,\fdel}=\dupF{v_n,Au}
=\dupF{A^*v_n,u}\leq \normEd{A^*v_n}\normE{u}\,.
\end{aligned}
\]
On the other hand, we have
\[
\dupE{J_q(u_n),u_n}=\normEd{A^*v_n}\normE{u_n}\,,
\]
thus altogether
\[
\normE{u_n}\leq \normE{u}\,.
\]
\end{proof}

\begin{remark}
Note that $E_n=(J_q^{E\to E^*})^{-1}(A^*Z_n )$ is not necessarily a linear space, though. So in the proof of stability and convergence we cannot resort to the respective results on the general projection method, but have to carry out separate proofs for the least error method, see Lemma \ref{stability_leasterror} and Theorem \ref{conv_leasterror_apriori} below.
\end{remark}

In the sequel one can see that the ''least error'' method deserves its name in the Banach space setting, too.

\begin{theorem}\label{rem_leasterror} Let 
  $\usol$  be some solution of \eqref{Auf}. Then, for any $n\in\N$, the minimizer $u_n$ defined by \eqref{leasterror} in case $f^\delta=f$ attains the least error in $E_n=(J_q^{E\to E^*})^{-1}(A^*Z_n)$ measured  with respect to the Bregman distance, that is,
$$D(\usol,u_n)\leq D(\usol,u),\quad\forall u\in E_n=(J_q^{E\to E^*})^{-1}(A^*Z_n).$$

\begin{proof}
 In the case of exact data, equation \eqref{projmeth} can be written as 
\be{exact}
\dupF{z_n,A u_n-A\usol}=0, \forall z_n\in Z_n.
\ee 
Let $u\in(J_q^{E\to E^*})^{-1}A^*v$ with $v\in Z_n$ be an arbitrary element of $E_n=(J_q^{E\to E^*})^{-1}(A^*Z_n)$. Then one has 
\begin{align*}
&D(\usol,u_n)+D(u_n,u)-D(\usol,u) = \dupF{J_q^{E\to E^*}(u)-J_q^{E\to E^*}(u_n),\usol-u_n}\\
&= \dupF{A^*v-A^*v_n,\usol-u_n}
= \dupF{v-v_n,f-Au_n}
= 0,
\end{align*}
as $v,v_n\in Z_n$ satisfy \eqref{exact}.  Since $D(u_n,u)$ is nonnegative, this implies the desired inequality,  showing that $u_n$ is the Bregman projection of $\usol$ onto $E_n$. \end{proof}
\end{theorem}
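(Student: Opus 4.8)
The plan is to combine the structural description $E_n=(J_q^{E\to E^*})^{-1}(A^*Z_n)$ from Lemma \ref{characterization_leasterror} with the Galerkin orthogonality enforced by the projection method in the exact-data case. The central device is the three-point identity for the Bregman distance, which for arbitrary $\usol,u_n,u\in E$ reads
\be{threepoint}
D(\usol,u_n)+D(u_n,u)-D(\usol,u)=\dupE{J_q^{E\to E^*}(u)-J_q^{E\to E^*}(u_n),\usol-u_n}.
\ee
This follows directly from the definition \eqref{Bregmandistance}: inserting the three distances makes the four norm-power terms cancel in pairs, while the remaining duality pairings collapse to the right-hand side. Since the Bregman distance is nonnegative, so that $D(u_n,u)\geq0$, the theorem reduces to showing that the right-hand side of \eqref{threepoint} vanishes.

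First I would rewrite both feasible points through their duality images. Because $u_n,u\in E_n=(J_q^{E\to E^*})^{-1}(A^*Z_n)$, there exist $v_n,v\in Z_n$ with $J_q^{E\to E^*}(u_n)=A^*v_n$ and $J_q^{E\to E^*}(u)=A^*v$. Substituting these into the right-hand side of \eqref{threepoint} and using the adjoint relation $\dupE{A^*w,\cdot}=\dupF{w,A\,\cdot}$ converts the $E^*$-$E$ pairing into an $F^*$-$F$ pairing,
\[
\dupE{A^*(v-v_n),\usol-u_n}=\dupF{v-v_n,A(\usol-u_n)}.
\]

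Finally I would invoke the projection equation \eqref{projmeth}, which in the exact-data case $f^\delta=f=A\usol$ becomes $\dupF{z_n,A(u_n-\usol)}=0$ for all $z_n\in Z_n$. Since $v-v_n\in Z_n$, this orthogonality forces $\dupF{v-v_n,A(\usol-u_n)}=0$, hence the right-hand side of \eqref{threepoint} vanishes and $D(\usol,u)=D(\usol,u_n)+D(u_n,u)\geq D(\usol,u_n)$, as claimed; this simultaneously exhibits $u_n$ as the Bregman projection of $\usol$ onto $E_n$. I expect the only step requiring genuine care to be the bookkeeping in verifying the identity \eqref{threepoint}, since the remaining two steps are immediate consequences of the description of $E_n$ and the exactly satisfied Galerkin condition.
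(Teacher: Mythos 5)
Your proposal is correct and follows essentially the same route as the paper's proof: the three-point identity for the Bregman distance, the representation $J_q^{E\to E^*}(u_n)=A^*v_n$, $J_q^{E\to E^*}(u)=A^*v$ with $v_n,v\in Z_n$ from Lemma \ref{characterization_leasterror}, and the exact-data Galerkin orthogonality to make the mixed term vanish. Your explicit verification of the three-point identity and of the adjoint step merely spells out what the paper's chain of equalities does implicitly, so there is nothing to add.
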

 
\subsection{Stability}
A stability result for the least error method can be formulated by using
\be{kappa}
\begin{aligned}
\hat{\kappa}_n:=&\sup_{z_{n,1},z_{n,2}\in Z_n }\frac{\normFd{z_{n,1}-z_{n,2}}}{\Bigl(D_q^{sym}\bigl((J_q^{E\to E^*})^{-1}(A^*z_{n,1}),(J_q^{E\to E^*})^{-1}(A^*z_{n,2})\bigr)\Bigr)^\frac{1}{q^*}}\\
=& \max_{z_{n,1},z_{n,2}\in Z_n, \normFd{z_{n,1}}=1, \normFd{z_{n,2}}\leq1}
\frac{\normFd{z_{n,1}-z_{n,2}}}{\Bigl(D_{q^*}^{sym}(A^*z_{n,1},A^*z_{n,2})\Bigr)^\frac{1}{q^*}}\\
\kappa_n^*:=&\sup_{z_n\in Z_n }\frac{\normFd{z_n}}{\normEd{A^*z_n}}
=\frac{1}{\min_{z_n\in Z_n ,\normFd{z_n}=1} \normEd{A^*z_n}}
\end{aligned}
\ee
Again, as in Remark \ref{kappafinite}, one sees that $\hat{\kappa}_n$ and $\kappa_n^*$ are finite under the conditions of Lemma \ref{welldef_leasterror}, in particular, condition \eqref{nspd}.

\begin{lemma}\label{stability_leasterror}
Let the assumptions of Lemma  \ref{characterization_leasterror} be satisfied and consider, for $f_1, f_2\in F$ the solutions of 
\[
u_{n,i}\in\mbox{argmin}\{ \normE{\tilde{u}} \, : \, \forall z_n\in Z_n \, : \ \dupF{z_n,A \tilde{u}}=\dupF{z_n,f_i}\} \quad i=1,2\,. 
\]
Then the estimate 
\[
D_q^{sym}(u_{n,1},u_{n,2})^{\frac{1}{q}}\leq\hat{\kappa}_n \normF{f_1-f_2}
\] 
holds; in particular, if $E$ is a $q$-convex space, then one has
\[
\normE{u_{n,1}-u_{n,2}}\leq \hat{\kappa}_n \normF{f_1-f_2}\,.
\]
If additionally $E$ is $s$-smooth, then one has 
\[
\normE{u_{n,1}-u_{n,2}}^{q-s+1}\leq
\frac{C_s}{c_q} \max\{\normE{u_{n,1}},\normE{u_{n,2}}\}^{q-s} \kappa_n^* \normF{f_1-f_2}\,.
\]
for some constants $c_q,C_s$ independent of $n$.
\end{lemma}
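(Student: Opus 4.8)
The plan is to begin from the optimality characterization established in Lemma \ref{characterization_leasterror}: each minimizer admits the representation $J_q^{E\to E^*}(u_{n,i})=A^*z_{n,i}$ for some $z_{n,i}\in Z_n$, equivalently $u_{n,i}=(J_q^{E\to E^*})^{-1}(A^*z_{n,i})$, and satisfies the feasibility relations $\dupF{z_n,Au_{n,i}}=\dupF{z_n,f_i}$ for every $z_n\in Z_n$. First I would expand the symmetric Bregman distance by its definition \eqref{symmBregmandistance} and substitute these representations of the subgradients, which gives
\[
D_q^{sym}(u_{n,1},u_{n,2})=\dupE{A^*(z_{n,2}-z_{n,1}),u_{n,2}-u_{n,1}}=\dupF{z_{n,2}-z_{n,1},A(u_{n,2}-u_{n,1})}.
\]
Since $z_{n,2}-z_{n,1}\in Z_n$ is an admissible test element in the feasibility relations for both indices, the right-hand side collapses to $\dupF{z_{n,2}-z_{n,1},f_2-f_1}$, which I would bound by $\normFd{z_{n,1}-z_{n,2}}\normF{f_1-f_2}$.

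For the first estimate I would then invoke the definition \eqref{kappa} of $\hat{\kappa}_n$ to replace $\normFd{z_{n,1}-z_{n,2}}$ by $\hat{\kappa}_n\,D_q^{sym}(u_{n,1},u_{n,2})^{1/q^*}$; here the symmetric form of the identity \eqref{idBregdist} is what guarantees that the denominator appearing in $\hat{\kappa}_n$, written on the dual side as $D_{q^*}^{sym}(A^*z_{n,1},A^*z_{n,2})$, indeed equals $D_q^{sym}(u_{n,1},u_{n,2})$. Combining with the preceding bound yields
\[
D_q^{sym}(u_{n,1},u_{n,2})\leq\hat{\kappa}_n\,D_q^{sym}(u_{n,1},u_{n,2})^{1/q^*}\normF{f_1-f_2},
\]
and dividing through by $D_q^{sym}(u_{n,1},u_{n,2})^{1/q^*}$ (the degenerate case in which this factor vanishes being trivial) produces the claim once one notes $1-\tfrac{1}{q^*}=\tfrac{1}{q}$. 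The second estimate then follows immediately from the defining lower bound of $q$-convexity of $E$, namely $\normE{u_{n,1}-u_{n,2}}^q\leq D_q^{sym}(u_{n,1},u_{n,2})$ in the paper's normalization.

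For the third estimate I would instead route the bound through $\kappa_n^*$: its definition \eqref{kappa} gives $\normFd{z_{n,1}-z_{n,2}}\leq\kappa_n^*\normEd{A^*(z_{n,1}-z_{n,2})}=\kappa_n^*\normEd{J_q^{E\to E^*}(u_{n,1})-J_q^{E\to E^*}(u_{n,2})}$, so that
\[
D_q^{sym}(u_{n,1},u_{n,2})\leq\kappa_n^*\,\normEd{J_q^{E\to E^*}(u_{n,1})-J_q^{E\to E^*}(u_{n,2})}\,\normF{f_1-f_2}.
\]
The remainder is geometric: $q$-convexity supplies the lower bound $c_q\normE{u_{n,1}-u_{n,2}}^q\leq D_q^{sym}(u_{n,1},u_{n,2})$, while $s$-smoothness supplies the H\"older-type continuity of the duality mapping with gauge $q$, namely $\normEd{J_q^{E\to E^*}(u_{n,1})-J_q^{E\to E^*}(u_{n,2})}\leq C_s\max\{\normE{u_{n,1}},\normE{u_{n,2}}\}^{q-s}\normE{u_{n,1}-u_{n,2}}^{s-1}$. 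Substituting both and cancelling one factor of $\normE{u_{n,1}-u_{n,2}}^{s-1}$ leaves the exponent $q-s+1$ on the left and reproduces the stated inequality.

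I expect the main obstacle to be this last paragraph: one must cite the precise convexity and smoothness characterizations (the Xu--Roach--type modulus inequalities, e.g. from \cite{SKHK12}) in exactly the normalization that yields the constants $c_q,C_s$ and the exponent $q-s$ on the $\max$-norm weight, and in particular verify that the duality-mapping estimate taken with gauge $q$ rather than gauge $s$ carries the extra factor $\max\{\normE{u_{n,1}},\normE{u_{n,2}}\}^{q-s}$. Everything else --- the expansion of $D_q^{sym}$, the use of $z_{n,2}-z_{n,1}$ as a test element, the conversion of the denominator of $\hat{\kappa}_n$ via \eqref{idBregdist}, and the exponent arithmetic $1-1/q^*=1/q$ --- is routine once the representation from Lemma \ref{characterization_leasterror} is in hand.
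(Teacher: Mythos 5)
Your proposal is correct and follows essentially the same route as the paper's proof: the representation $J_q^{E\to E^*}(u_{n,i})=A^*v_{n,i}$ from Lemma \ref{characterization_leasterror}, the collapse of $D_q^{sym}(u_{n,1},u_{n,2})$ to $\dupF{v_{n,2}-v_{n,1},f_2-f_1}$ via feasibility, the bound through $\hat{\kappa}_n$ with the exponent arithmetic $1-\tfrac{1}{q^*}=\tfrac{1}{q}$, and the $\kappa_n^*$ route combined with $q$-convexity and $s$-smoothness (with cancellation of $\normE{u_{n,1}-u_{n,2}}^{s-1}$) for the third estimate. The only cosmetic difference is that you invoke \eqref{idBregdist} to identify the denominator of $\hat{\kappa}_n$, whereas the first form of \eqref{kappa} already contains $D_q^{sym}\bigl((J_q^{E\to E^*})^{-1}(A^*z_{n,1}),(J_q^{E\to E^*})^{-1}(A^*z_{n,2})\bigr)$ and so applies directly.
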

\begin{proof} According to Lemma  \ref{welldef_leasterror}, the solutions $u_{n,i},\,i=1,2$, are well defined.
Lemma \ref{characterization_leasterror} implies existence of $v_{n,i}\in Z_n $ such that $J_q^{E\to E^*}(u_{n,i})=A^*v_{n,i}$, $i=1,2$. Therefore, 
we get the identity
\[
\begin{aligned}
D_q^{sym}(u_{n,1},u_{n,2})=&
\dupE{J_q^{E\to E^*}(u_{n,2})-J_q^{E\to E^*}(u_{n,1}),u_{n,2}-u_{n,1}}\\
=&\dupE{A^*(v_{n,2}-v_{n,1}),u_{n,2}-u_{n,1}}
=\dupF{v_{n,2}-v_{n,1},f_2-f_1}\\
\leq&\hat{\kappa}_n \Bigl(D_q^{sym}(u_{n,1},u_{n,2})\Bigr)^{\frac{1}{q^*}} \normF{f_2-f_1}
\end{aligned}
\]
Similarly, in the $q$-convex and $s$-smooth case, which implies
\be{qconvex}
D_q^{sym}(\tilde{u},u)\geq D_q(\tilde{u},u)\geq c_q\normE{\tilde{u}-u}^q 
\ee
\[
\normEd{J_q^{E\to E^*}(\tilde{u})-J_q^{E\to E^*}(u)}\leq C_s \max\{\normE{\tilde{u}},\normE{u}\}^{q-s}
\normE{\tilde{u}-u}^{s-1}
\]
for some constants $c_q,C_s>0$ and all $\tilde{u},u\in H$ (see, e.g., \cite[Lemma 2.7]{BKMSS07}, \cite[Theorem 2.42]{SKHK12}), we get
\[
\begin{aligned}
&c_q\normE{u_{n,1}-u_{n,2}}^q\leq D_q^{sym}(u_{n,1},u_{n,2})
=\dupF{v_{n,2}-v_{n,1},f_2-f_1}\\
&\leq\normFd{v_{n,2}-v_{n,1}}\normF{f_2-f_1}\leq\kappa_n^*\normFd{A^*v_{n,2}-A^*v_{n,1}}\normF{f_2-f_1}\\
&\leq\kappa_n^* C_s \max\{\normE{u_{n,1}},\normE{u_{n,2}}\}^{q-s} \normE{u_{n,1}-u_{n,2}}^{s-1} \normF{f_2-f_1}\,.
\end{aligned}
\]
\end{proof}
Note that for $p\in(1,\infty)$, $L^p(\Omega)$ is $\max\{p,2\}$-convex and $\min\{p,2\}$-smooth, see, e.g, \cite[Example 2.47]{SKHK12}.

\subsection{Convergence with a priori choice of $n$}
For the least error method, due to possible nonlinearity of the space $E_n$ according to Lemma \ref{characterization_leasterror}, convergence cannot be directly concluded from Theorem \ref{conv_projmeth_apriori}. We obtain the following result with a priori discretization level choice.
\begin{theorem}\label{conv_leasterror_apriori}
Let $E$ be a Banach space in which the unit ball is weakly compact and assume that, for some $q\geq1$, the functional 
\eqref{PhiH} is strictly convex and  Frechet differentiable, and the single valued duality mapping $J_q^{E\to E^*}$  is invertible. 

Let $u_n$ be defined by the least error method \eqref{leasterror}, where the operator $A$ is assumed to satisfy
\eqref{nspd} and 
\be{convQn}
\forall z\in F^*\, : \ \inf_{z_n\in Z_n} \normEd{A^*(z-z_n)}\to0\mbox{ as }n\to\infty\,.
\ee
Then the minimum-norm-solution $\udag$ of \eqref{Auf} is unique and 
for exact data ($\delta=0$) we have convergence 
\[D_q(\udag_n,\udag)\to0 \mbox{ as }n\to\infty\,.\]
If, additionally,  the space $E$ is smooth and uniformly convex, one has 
\[\normE{\udag_n-\udag}\to0 \mbox{ as }n\to\infty\,\]
for exact data, while
for noisy data and with the dimension $n=n(\delta)$ chosen such that
\be{aprioristoppingrule1}
n(\delta)\to\infty\mbox{ and }\hat\kappa_{n(\delta)}\delta\to0 \mbox{ as }\delta\to0\,,
\ee
we have convergence 
\[\normE{u_{n(\delta)}-\udag}\to0 \mbox{ as }\delta\to0\,.\]
.

\end{theorem}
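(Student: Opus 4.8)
The plan is to derive all four assertions from a single weak-subsequential-limit argument for the exact-data iterates, and then transfer the outcome to noisy data by stability. Uniqueness of $\udag$ is immediate from Lemma~\ref{welldef_leasterror}(ii), since strict convexity of $\Phi_E$ forces the minimum-norm solution to be unique. For the exact-data part I would first record the structural facts furnished by Lemma~\ref{characterization_leasterror}: the approximation $\udag_n$ lies in $E_n=(J_q^{E\to E^*})^{-1}(A^*Z_n)$, so $J_q^{E\to E^*}(\udag_n)=A^*v_n$ for some $v_n\in Z_n$, and it satisfies the exact projection equation \eqref{exact}. The key preliminary observation is that $\udag$ is itself feasible for the $n$-th least-error problem, since $Au^\dagger=f$ gives $\dupF{z_n,A\udag}=\dupF{z_n,f}$ for all $z_n\in Z_n$; hence $\normE{\udag_n}\le\normE{\udag}$ for every $n$, and in particular $(\udag_n)$ is bounded.

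Since the unit ball of $E$ is weakly compact, $(\udag_n)$ has a weakly convergent subsequence (not relabelled), $\udag_n\rightharpoonup\bar u$. The crucial step is to prove $A\bar u=f$ from the dual density \eqref{convQn} alone. Fixing $z\in F^*$ and choosing $\zeta_n\in Z_n$ with $\normEd{A^*(z-\zeta_n)}\to0$, I would split, using $A\udag=f$,
\[
\dupE{A^*z,\bar u-\udag}=\dupE{A^*\zeta_n,\bar u-\udag}+\dupE{A^*(z-\zeta_n),\bar u-\udag}.
\]
The second term is dominated by $\normEd{A^*(z-\zeta_n)}\normE{\bar u-\udag}\to0$, while the first equals $\dupF{\zeta_n,A\bar u-f}=\dupE{A^*\zeta_n,\bar u-\udag_n}$ by \eqref{exact} and tends to $0$, because $A^*\zeta_n\to A^*z$ strongly in $E^*$ and $\bar u-\udag_n\rightharpoonup0$. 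As the left side does not depend on $n$, it vanishes for every $z\in F^*$, i.e. $A\bar u=f$. Weak lower semicontinuity of the norm together with $\normE{\udag_n}\le\normE{\udag}$ then yields $\normE{\bar u}\le\normE{\udag}$, so uniqueness of the minimum-norm solution gives $\bar u=\udag$; a routine subsequence argument promotes this to $\udag_n\rightharpoonup\udag$ and $\normE{\udag_n}\to\normE{\udag}$ for the full sequence. Inserting $J_q^{E\to E^*}(\udag_n)=A^*v_n$ into \eqref{Bregmandistance} cancels the duality term ($\dupF{v_n,A\udag_n-f}=0$), so $D_q(\udag,\udag_n)=\tfrac1q(\normE{\udag}^q-\normE{\udag_n}^q)\to0$, and the reversed distance $D_q(\udag_n,\udag)=\tfrac1q\normE{\udag_n}^q-\tfrac1q\normE{\udag}^q+\dupE{J_q^{E\to E^*}(\udag),\udag-\udag_n}$ also tends to $0$ since the pairing goes to $0$ in the weak limit. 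When $E$ is smooth and uniformly convex, \eqref{equivalence} turns this Bregman convergence into $\normE{\udag_n-\udag}\to0$.

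For noisy data I would split $\normE{u_{n(\delta)}-\udag}\le\normE{u_{n(\delta)}-\udag_{n(\delta)}}+\normE{\udag_{n(\delta)}-\udag}$. The approximation term tends to $0$ because $n(\delta)\to\infty$ and the exact-data iterates converge in norm, as just shown. The noise-propagation term is controlled by Lemma~\ref{stability_leasterror} applied with $f_1=\fdel$, $f_2=f$: it bounds (the $q$-th root of) the symmetric Bregman distance $D_q^{sym}(u_{n(\delta)},\udag_{n(\delta)})$, and thereby the norm, by $\hat\kappa_{n(\delta)}\delta$, which vanishes under the parameter choice \eqref{aprioristoppingrule1}.

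I expect the main obstacle to be the identity $A\bar u=f$ for the weak limit: condition \eqref{convQn} is a density statement in the range of $A^*$ rather than a primal approximation property, so its use rests entirely on the delicate pairing of the strongly convergent sequence $A^*\zeta_n$ against the weakly null sequence $\bar u-\udag_n$, with \eqref{exact} serving to eliminate the dependence on $f$. A secondary subtlety is that Lemma~\ref{stability_leasterror} delivers the clean norm bound for the noise term only in $q$-convex spaces, so in the merely uniformly convex setting one must instead argue through $D_q^{sym}(u_{n(\delta)},\udag_{n(\delta)})\to0$ and the equivalence \eqref{equivalence}.
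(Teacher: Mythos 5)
Your proposal is correct and follows essentially the same route as the paper's proof: the feasibility bound $\normE{\udag_n}\leq\normE{\usol}$, weak compactness of the unit ball, identification of the weak limit as $\udag$ via \eqref{convQn} and minimality, the Bregman estimate $D_q(\udag_n,\udag)\leq\dupE{J_q^{E\to E^*}(\udag),\udag-\udag_n}\to0$, and Lemma \ref{stability_leasterror} combined with \eqref{aprioristoppingrule1} for the noisy case. The only cosmetic deviations are that you identify $A\bar u=f$ by pairing the strongly convergent $A^*\zeta_n$ against the weakly null $\bar u-\udag_n$ where the paper uses the direct bound $\inf_{z_n\in Z_n}\normEd{A^*(z-z_n)}\cdot 2\normE{\usol}$, and that your extra cancellation giving $D_q(\udag,\udag_n)=\tfrac{1}{q}(\normE{\udag}^q-\normE{\udag_n}^q)$ is not needed; you also correctly flag the same subtlety the paper itself has, namely that the final norm conclusion really invokes $q$-convexity via \eqref{qconvex} rather than mere uniform convexity.
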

\begin{proof}
Let $\udag_n$ be the well defined elements   (due to Lemma \ref{welldef_leasterror}) 
\be{udagn}
\udag_n\in\mbox{argmin}\{ \normE{\tilde{u}} \, : \, \forall z_n\in Z_n \, : \ \dupF{z_n,A \tilde{u}}=\dupF{z_n,f}\}\,,
\ee
i.e., $u_n$ with exact data. Then the following holds for any solution $\usol$ to \eqref{Auf},
\be{udagnuast}
\normE{\udag_n}\leq \normE{\usol}.
\ee
By the assumed weak compactness of the unit ball in $E$, the sequence $(\udag_n)_{n\in\N}$ has a weakly convergent subsequence $(\udag_{n_l})_{l\in\N}$ whose limit $u$ solves \eqref{Auf}, since $A$ is weakly continuous and by $\dupF{z_{n_l},A\udag_{n_l}-f}=0$ for all $z_{n_l}\in Z_{n_l}$ and \eqref{convQn} we have
\[
\begin{aligned}
\forall z\in F^*\, : \
\dupF{z,A\udag_{n_l}-f}=&\inf_{z_{n_l}\in Z_{n_l}}\dupF{z-z_{n_l},A\udag_{n_l}-f)}\\
=&\inf_{z_{n_l}\in Z_{n_l}}\dupF{z-z_{n_l},A(\udag_{n_l}-\usol)}\\
\leq&\inf_{z_{n_l}\in Z_{n_l}}\normEd{A^*(z-z_{n_l})} 2 \normE{\usol} \to 0 \mbox{ as }l\to\infty\,.
\end{aligned}
\]
Moreover, by \eqref{udagnuast} and weak lower semicontinuity of the norm, this limit $u$ satisfies $\normE{u}\leq \normE{\usol}$ for any solution $\usol$ of \eqref{Auf}, thus it has to coincide with the unique minimum-norm-solution $\udag$. A subsequence-subsequence argument yields weak convergence of the whole sequence $(\udag_n)_{n\in\N}$ to $\udag$ as $n\to\infty$.
Hence, for the Bregman distance we get, again using \eqref{udagnuast} with $\usol=\udag$, that
\[
\begin{aligned}
D_q(\udag_n,\udag)=&
\tfrac{1}{q}\normE{\udag_n}^q-\tfrac{1}{q}\normE{\udag}^q + \dupE{J_q^{E\to E^*}(\udag),\udag-\udag_n}\\
\leq&\dupE{J_q^{E\to E^*}(\udag),\udag-\udag_n} \to 0 \mbox{ as }n\to\infty
\end{aligned}
\]
by the already shown weak convergence.
This proves the assertion in case of exact data, since then we have $u_n=\udag_n$.\\
In case of noisy data we can estimate the Bregman distance between $u_n$ and $\udag_n$ by means of Lemma \ref{stability_leasterror}:
\[
D_q^{sym}(u_n,\udag_n)^{\frac{1}{q}}\leq \hat{\kappa}_n \delta\,.
\]
So by choosing $n=n(\delta)$ such that $n(\delta)\to\infty$ and $\hat{\kappa}_{n(\delta)} \delta\to0$  as $\delta\to0$, we have
\[ 
D_q(\udag_n,\udag)\to0 \mbox{ and } D_q^{sym}(u_n,\udag_n)^{\frac{1}{q}}\to0 \mbox{ as }\delta\to0\,.
\]
However, the Bregman distance does not satisfy a triangle inequality, thus we need $q$-convexity of $E$ at this point to conclude from Lemma \ref{stability_leasterror} and \eqref{qconvex}
\[
\normE{\udag_n-\udag}\to0 \mbox{ and } \normE{u_n-\udag_n}\to0 \mbox{ as }\delta\to0\,,
\]
thus the assertion.
\end{proof}

\begin{remark} The approximation property \eqref{convQn} is ensured, e.g., by choosing $Z_n$ according to \eqref{Fspace}.

\end{remark}
\subsection{Convergence with a posteriori choice of $n$ -- the monotone error rule}

Under the conditions of Lemma \ref{characterization_leasterror} we can carry over some results for the least error method from the Hilbert space setting by closely following \cite{HAG02,GHK14}. In particular we will show monotonicity of the error measured in the Bregman distance defined by \eqref{Bregmandistance}, as well as convergence if the stopping index determined by the monotone error rule goes to infinity as $\delta\to0$, see \cite[Theorem 2]{GHK14}.
\begin{theorem}\label{conv_leastserror_me}
Let the assumptions of Lemmas \ref{welldef_leasterror} (i), (ii) and \ref{characterization_leasterror} be satisfied.  Then for $u_n$ defined by the least error method we have 
\begin{enumerate}
\item[(a)] There exists $v_n\in Z_n $ such that $u_n=(J_q^{E\to E^*})^{-1}(A^*v_n)$.
\item[(b)] With $v_n$ as in (a), the identity $\normE{u_n}^q=\dupF{v_n,\fdel}$ holds. If $Z_n\subseteq Z_{n+1}$ for all $n\in \mathbb{N}$, then
\[
\normE{u_n}\leq \normE{u_{n+1}} \mbox{ for all } n\in \mathbb{N}.
\]
\item[(c)] With $d_{ME}(n)$ defined by  
\[
d_{ME}(n)= \frac{\dupF{v_{n+1}-v_n,\fdel}}{q\normFd{v_{n+1}-v_n}}
\]
the identities
\[
d_{ME}(n)= \frac{\normE{u_{n+1}}^q-\normE{u_{n}}^q}{q\normFd{v_{n+1}-v_n}}=\frac{D_q(u_{n+1},u_n)}{\normFd{v_{n+1}-v_n}}
\]
and the estimate 
\[
D_q(\udag,u_{n+1})-D_q(\udag,u_n) \leq -(d_{ME}(n)-\delta) \normFd{v_{n+1}-v_n}
\]
hold.
In particular, if
\[
\forall n\in\N\,:\ Z_n\subseteq Z_{n+1},
\] 
then by minimality of $u_n$, we have $d_{ME}(n)\geq0$ and the error measured in the Bregman distance is monotonically decreasing as long as 
\be{deltadME}
\delta \leq d_{ME}(n)\,.
\ee
\item[(d)] Let $n=n_{ME}(\delta)$ be the first index such that \eqref{deltadME} is violated.

If $n_{ME}(\delta)\to\infty$ as $\delta\to0$ and \eqref{convQn} holds, then $\normE{u_{n_{ME}(\delta)}-\udag}\to0$ as $\delta\to0$ provided that $E$ is smooth and $q$-convex.
\end{enumerate}
\end{theorem}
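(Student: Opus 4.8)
The plan is to derive (a)–(c) directly from the optimality characterization of Lemma \ref{characterization_leasterror} together with the basic duality identities, and then to feed the resulting monotonicity relations into a double-limit argument for (d). Part (a) is essentially a restatement of Lemma \ref{characterization_leasterror}: its proof produces a multiplier with $J_q^{E\to E^*}(u_n)=A^*v_n$, $v_n\in Z_n$ (the sign appearing there is absorbed into $v_n$, which is legitimate since $Z_n$ is a linear space). For the norm identity in (b) I would combine the duality property $\dupE{J_q^{E\to E^*}(u_n),u_n}=\normE{u_n}^q$ with (a) and the feasibility relation $\dupF{v_n,Au_n}=\dupF{v_n,\fdel}$, obtaining $\normE{u_n}^q=\dupE{A^*v_n,u_n}=\dupF{v_n,Au_n}=\dupF{v_n,\fdel}$. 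The monotonicity $\normE{u_n}\leq\normE{u_{n+1}}$ then needs no computation: when $Z_n\subseteq Z_{n+1}$ the admissible sets are nested, so $u_{n+1}$ is feasible for the $n$-th minimization while $u_n$ has smallest norm there.

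For (c) I set $w=v_{n+1}-v_n$. The first identity is immediate from (b), since $\dupF{w,\fdel}=\normE{u_{n+1}}^q-\normE{u_n}^q$. For the second I would expand $D_q(u_{n+1},u_n)$ via \eqref{Bregmandistance} and observe that the cross term $\dupE{J_q^{E\to E^*}(u_n),u_n-u_{n+1}}=\dupF{v_n,Au_n-Au_{n+1}}$ vanishes, because $v_n\in Z_n\subseteq Z_{n+1}$ while both $u_n$ and $u_{n+1}$ satisfy the $Z_n$-constraints; this leaves $D_q(u_{n+1},u_n)=\tfrac{1}{q}(\normE{u_{n+1}}^q-\normE{u_n}^q)$. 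For the error estimate I subtract the two Bregman distances and use $J_q^{E\to E^*}(u_k)=A^*v_k$, $A\udag=f$ and feasibility to reach $D_q(\udag,u_{n+1})-D_q(\udag,u_n)=-\tfrac{1}{q}\dupF{w,\fdel}+\dupF{w,\fdel-f}$; bounding $\dupF{w,\fdel-f}\leq\normFd{w}\,\delta$ by \eqref{delta} gives precisely $-(d_{ME}(n)-\delta)\normFd{w}$. The ``in particular'' statement is a corollary: nestedness makes the numerator of $d_{ME}(n)$ nonnegative by (b), and the sign of the estimate shows the Bregman error cannot increase while $\delta\leq d_{ME}(n)$.

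The substance of the theorem is (d), which I would prove by triangulating through the exact-data iterates $\udag_m$ of \eqref{udagn}. Fix an arbitrary level $m$. Since $n_{ME}(\delta)\to\infty$, for all sufficiently small $\delta$ one has $m<N:=n_{ME}(\delta)$, and by definition of the stopping index \eqref{deltadME} holds at every $n<N$; telescoping the estimate of (c) over $n=m,\dots,N-1$ then gives $D_q(\udag,u_N)\leq D_q(\udag,u_m)$. Next I let $\delta\to0$ with $m$ held fixed: the stability Lemma \ref{stability_leasterror}, with its fixed constant $\hat\kappa_m$, yields $\normE{u_m-\udag_m}\leq\hat\kappa_m\delta\to0$, and continuity of $u\mapsto D_q(\udag,u)$ (single-valuedness and demicontinuity of $J_q^{E\to E^*}$ in the smooth space $E$) gives $D_q(\udag,u_m)\to D_q(\udag,\udag_m)$, so that $\limsup_{\delta\to0}D_q(\udag,u_N)\leq D_q(\udag,\udag_m)$. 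Finally I let $m\to\infty$: Theorem \ref{conv_leasterror_apriori} provides $\normE{\udag_m-\udag}\to0$, hence $D_q(\udag,\udag_m)\to0$ by the equivalence \eqref{equivalence}, which forces $D_q(\udag,u_{n_{ME}(\delta)})\to0$ and, once more by \eqref{equivalence}, the claimed norm convergence.

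The main obstacle is exactly this interchange of limits in (d): the telescoping bound $D_q(\udag,u_N)\leq D_q(\udag,u_m)$ must remain available up to the data-dependent stopping index $N$, which is what the hypothesis $n_{ME}(\delta)\to\infty$ secures, since it pushes $N$ past every fixed $m$; and one must carefully justify the continuity of the Bregman distance together with the conversions between Bregman distance and norm encoded in \eqref{equivalence}. Because the Bregman distance satisfies no triangle inequality, these quantitative convexity and smoothness properties of $E$ are genuinely needed, and this is precisely where $q$-convexity enters.
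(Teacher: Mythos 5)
Your proof is correct. Parts (a)--(c) coincide with the paper's argument essentially line for line: (a) reads the multiplier off Lemma \ref{characterization_leasterror} (with the sign absorbed into $v_n$, legitimate by linearity of $Z_n$); (b) combines $\dupE{J_q^{E\to E^*}(u_n),u_n}=\normE{u_n}^q$ with feasibility, and gets the monotonicity of $\normE{u_n}$ from nestedness of the admissible sets; and your identities for (c) --- the vanishing cross term $\dupE{J_q^{E\to E^*}(u_n),u_{n+1}-u_n}=0$ and the expansion ending in $-\tfrac{1}{q}\dupF{v_{n+1}-v_n,\fdel}+\dupF{v_{n+1}-v_n,\fdel-f}$ bounded via \eqref{delta} --- are exactly the paper's chain of equalities.

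In (d) you take a genuinely different route. The paper fixes an auxiliary a priori rule $n_0(\delta)$ satisfying \eqref{aprioristoppingrule1} and argues by cases: when $n_{ME}(\delta)>n_0(\delta)$ eventually, monotone decay gives $D_q(\udag,u_{n_{ME}(\delta)})\leq D_q(\udag,u_{n_0(\delta)})\to 0$ using the \emph{noisy-data} part of Theorem \ref{conv_leasterror_apriori} together with \eqref{equivalence}; when $n_{ME}(\delta)\leq n_0(\delta)$ along a subsequence, it uses $\hat\kappa_{n_{ME}(\delta)}\delta\leq\hat\kappa_{n_0(\delta)}\delta\to0$ (monotonicity of $\hat\kappa_n$, a consequence of $Z_n\subseteq Z_{n+1}$) together with Lemma \ref{stability_leasterror} and exact-data convergence, closing with a subsequence-subsequence argument. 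You avoid the case distinction altogether by telescoping the estimate of (c) only down to a fixed level $m$: stability is then invoked only with the fixed constant $\hat\kappa_m$, so you never need to control $\hat\kappa_n\delta$ along the data-dependent stopping index, nor introduce the auxiliary rule at all. What this buys is a cleaner argument that consumes only the exact-data part of Theorem \ref{conv_leasterror_apriori}; what it costs is the continuity of $u\mapsto D_q(\udag,u)$ along norm-convergent sequences, which you correctly ground in single-valuedness and norm-to-weak$^*$ continuity of $J_q^{E\to E^*}$ in the smooth space $E$ (with $\normEd{J_q^{E\to E^*}(u_k)}=\normE{u_k}^{q-1}$ bounded, the duality pairing passes to the limit) --- a fact the paper never needs, since it substitutes \eqref{equivalence}. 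Your double limit is organized correctly: the hypothesis $n_{ME}(\delta)\to\infty$ is used exactly where it must be, to push $N=n_{ME}(\delta)$ past every fixed $m$, and the final passage from $\limsup_{\delta\to0}D_q(\udag,u_{n_{ME}(\delta)})\leq\inf_m D_q(\udag,\udag_m)=0$ to norm convergence via $q$-convexity \eqref{qconvex} is sound.
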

\begin{proof}
Item (a) has already been proven in Lemma \ref{characterization_leasterror}.\\
Since the duality mapping satisfies 
\be{id_dualitymapping}
\dupE{J_q^{E\to E^*}(w),w}=\normE{w}^q\,,
\ee
we get the first part of item (b):
\[
\begin{aligned}
\normE{u_n}^q=&\dupE{J_q^{E\to E^*}(u_n),u_n}=\dupE{A^*v_n,u_n}\\
=&\dupF{v_n,Au_n}=\dupF{v_n,\fdel}\,.
\end{aligned}
\]
Note that $v_n$ as in (a) satisfies $\dupF{v_n,Au_n}=\dupF{v_n,\fdel}$ and $\dupF{v_n,Au_{n+1}}=\dupF{v_n,\fdel}$, due to the assumption $Z_n\subseteq Z_{n+1}$. Then \eqref{leasterror} 
yields the second part of (b).

The first identity in (c) is an immediate consequence of (b), while the second one follows from $\dupF{v_n,Au_{n+1}}=\dupF{v_n,Au_{n}}$ which can be rewritten as $\dupE{J_q^{E\to E^*}(u_n),u_{n+1}-u_n}=0$.

 Considering the differences between the Bregman distances and using that the term $\tfrac{1}{q}\normE{\usol}^q$ cancels out we get
\[
\begin{aligned}
&D_q(\usol,u_{n+1})-D_q(\usol,u_n)\\
=&\tfrac{1}{q}\normE{u_n}^q-\tfrac{1}{q}\normE{u_{n+1}}^q\\
&\quad 
+ \dupE{J_q^{E\to E^*}(u_{n+1}),u_{n+1}-\usol}-\dupE{J_q^{E\to E^*}(u_n),u_n-\usol}\\
=&\tfrac{1}{q}\normE{u_n}^q-\tfrac{1}{q}\normE{u_{n+1}}^q 
+ \dupE{J_q^{E\to E^*}(u_{n+1}),u_{n+1}}-\dupE{J_q^{E\to E^*}(u_n),u_n}\\
&\quad-\dupE{J_q^{E\to E^*}(u_{n+1})-J_q^{E\to E^*}(u_n),\usol}\\
=&\tfrac{1}{q^*} \normE{u_{n+1}}^q - \tfrac{1}{q^*} \normE{u_n}^q-\dupE{A^*(v_{n+1}-v_n),\usol}\\
=&\tfrac{1}{q^*} \dupF{v_{n+1}-v_n,\fdel}-\dupF{v_{n+1}-v_n,A\usol}\\
=&-\tfrac{1}{q}\dupF{v_{n+1}-v_n,\fdel}+\dupF{v_{n+1}-v_n,\fdel-f}\\
\leq&-\tfrac{1}{q}\dupF{v_{n+1}-v_n,\fdel}+\normFd{v_{n+1}-v_n}\delta\\
=&-(d_{ME}(n)-\delta)\normFd{v_{n+1}-v_n}\,,
\end{aligned}
\]
where we have  used again \eqref{id_dualitymapping} in the second equality.

Let $n_0(\delta)$ be an a priori stopping rule satisfying \eqref{aprioristoppingrule1}, let $(\delta^k)_{k\in\N}$ be a sequence of noise levels tending to zero and denote by $n_0^k=n_0(\delta^k)$, $n_{ME}^k=n_{ME}(\delta^k)$ the stopping indices chosen by the a priori and the monotone error rule, respectively.\\
If there exists $k_0$ such that $n_{ME}^k>n_0^k$ for all $k\geq k_0$, then by monotone decay of the error up to $n_{ME}^k$ we have
$D_q(\usol,u_{n_{ME}^k})\leq D_q(\usol,u_{n_0^k})\to0$ as $k\to\infty$.\\
Otherwise there exists a subsequence $(k_l)_{l\in\N}$ such that for all $l\in\N$ we have $n_{ME}^{k_l}\leq n_0^{k_l}$ and therefore $\hat\kappa_{n_{ME}^{k_l}}\leq\hat\kappa_{n_0^{k_l}}$, so the right hand limit in \eqref{aprioristoppingrule1} together with  Lemma \ref{stability_leasterror} implies $\normE{\udag_{n_{ME}^{k_l}}-
u_{n_{ME}^{k_l}}}\stackrel{l\to\infty}{\to}0$. On the other hand, by assumption we have $n_{ME}^{k_l}\stackrel{l\to\infty}{\to}\infty$, thus by Theorem \ref{conv_leasterror_apriori}, $\normE{\udag_{n_{ME}^{k_l}}-\udag}\to0$ as $l\to\infty$.
Thus a subsequence-subsequence argument yields the assertion.
\end{proof}
\begin{remark} Convergence in the degenerate case when $(n_{ME}(\delta))$ has finite accumulation points remains an open problem even in Hilbert spaces.

As regards a relation of the type
\[
d_{ME}(n)\leq \normF{Au_n-\fdel}/2,\,\,\,\forall n\in\mathbb{N},
\]
 shown in Hilbert spaces (see, e.g., \cite[Th.2, 5)]{GHK14},  it is not clear whether such a connection could be established in the Banach space framework.
\end{remark}
\section{On the requirements for spaces and subspaces}\label{sec_space}

The three projection methods investigated in this work require different theoretical settings as concerns stability and convergence. 

Note that reflexivity of the space $E$ is essential in convergence results for the least error method, thus ruling out the case $E=C(\overline{\Omega})$ or $E=\mathcal{M}(\Omega)$, while allowing $F=C(\overline{\Omega})$ (thus , e.g.,  collocation) or $F=\mathcal{M}(\Omega)$ (for modelling impulsive noise). 

The additional restrictions on uniform boundedness (e.g., \eqref{bd}) will be discussed in the following section; 
they are more severe in case of a posteriori choice of $n$, a fact which is already known from the Hilbert space setting.



The preimage and image space combinations we are interested in are 
\be{LpLr}
E=L^p(\Omega)\,, \quad F=L^r(\Omega)\,, \quad p,r\in(1,\infty)\,,
\ee
\be{LpC}
E=L^p(\Omega)\,, \quad F=C(\overline\Omega)\,, \quad p\in(1,\infty)\,,
\ee
\be{MLr}
E=C(\overline\Omega)^*=\mathcal{M}(\Omega)\,, \quad F=L^r(\Omega)\,, \quad p\in(1,\infty)\,,
\ee
\be{MC}
E=\mathcal{M}(\Omega)\,, \quad F=C(\overline\Omega)\,, 
\ee
for some smooth open domain $\Omega\subseteq\R^d$.
$L^p$ spaces with $p\in(1,\infty)$ are reflexive, smooth and $q(p)$-convex with $q(p)=\max\{2,p\}$, the duality mappings, which are given by
\[
(J_{q(p)}^{L^p\to L^{p^*}}(w))(x)=\norm{w}_{L^p}^{q(p)-p}|w(x)|^{p-1}\mbox{sign}(w)(x) 
\quad p^*=\frac{p}{p-1}
\]
are invertible with 
\[
(J_{q(p)}^{L^p\to L^{p^*}})^{-1}=J_{{q(p)}^*}^{L^p\to L^{p^*}}
\quad q(p)^*=\frac{q(p)}{q(p)-1}
\] 
(see, e.g., \cite[Section II.2]{SKHK12}), and if $r\geq2$, i.e., $q(r)=\max\{2,r\}=r$, then $J_{q(r)}^{L^r\to L^{r^*}}$ is additionally Gateaux differentiable with Gateaux derivative
\[
(J_{q(r)}^{L^r\to L^{r^*}})'(g))[h](x)=(r-1)|g(x)|^{r-2}h(x)\,.
\]
Therefore in case \eqref{LpLr} all well-definedness, characterization, stability and convergence results 
Lemmas 
\ref{welldef_projmeth},
\ref{welldef_leastsquares},
\ref{welldef_leasterror},
\ref{characterization_leastsquares},
\ref{characterization_leasterror},
\ref{stability_projmeth},
\ref{stability_leastsquares},
\ref{stability_leasterror},
Corollary 
\ref{conv_leastsquares_apriori},
and Theorems 
\ref{conv_projmeth_apriori},
\ref{conv_leasterror_apriori},
\ref{conv_projmeth_dp},
\ref{conv_leastsquares_dp},
\ref{conv_leastserror_me},
are applicable.
In case \eqref{LpC}, we still have all these results except for those on stability of the least squares method, Lemma \ref{stability_leastsquares} unless the projection spaces are chosen appropriately (cf. Remark \ref{Qpcontinuous}).
Likewise, in case \eqref{MLr} all results except for those concerning the least error method apply.
Finally, in the situation \eqref{MC}, only the results for the general projection method, 
Lemmas 
\ref{welldef_projmeth},
\ref{stability_projmeth},
and Theorems 
\ref{conv_projmeth_apriori},
\ref{conv_projmeth_dp},
remain valid.

\section{Applications}\label{sec_applic}

We will now consider applicability of the results derived in the previous sections for concrete discretizations, so that the crucial conditions for convergence and stability 
\eqref{convPn}, \eqref{bd_conv}, \eqref{bd1_conv}, \eqref{bdDP_conv}, \eqref{kappa*appr}, \eqref{kappa2*appr0_conv}, \eqref{convQn},
will become conditions on the smoothing properties of the forward operator. These will be interpreted for the case of integral equations. For certain test examples we will also provide numerical experiments.

\subsection{On convergence conditions for projection methods}

For applying the results from Sections \ref{sec_genproj}, \ref{sec_leastsquares}, \ref{sec_leasterror}, in the respective cases, it still remains to verify the crucial convergence conditions.
However, the convergence conditions \eqref{bd_conv}, \eqref{bd1_conv}, \eqref{bdDP_conv}, \eqref{kappa*appr}, \eqref{kappa2*appr0_conv} (recall the corrsponding sufficient boundedness conditions \eqref{bd}, \eqref{bd1}, \eqref{bdDP}, \eqref{kappa*appr0}, \eqref{kappa2*appr0})  require an appropriate trade-off between stability and approximation. Note that these conditions are only needed for the general projection and the least squares method, but not for the least error method.

We will now illustrate these conditions for integral equations with discretization in spline spaces.

Let $k,n \in \N$, $h=\frac1n$, $1<p<\infty$, $1<r<\infty$. We denote by $S^{(l)}_{k-1}(I_h)$ the spline space defined as the set of functions $w_h \in C^l[0,1]$, which in each subinterval $I_h^i:[(i-1)h,ih], i=1,\dots,n$ are polynomials of order $\leq k-1$: $w_h|_{I_h^i} \in \Pi_{k-1}$. 
The case of potentially discontinuous piecewise polynomial functions $w_h$ will be denoted by 
$S^{(-1)}_{k-1}(I_h)$.\\
We recall below  several well-known properties of splines.
\begin{enumerate}
\item[1)] Approximation property: 
\[
\begin{aligned}
\forall v \in W^{l,p}(0,1), \,\,
 \exists v_h \in S^{(-1)}_{k-1}(I_h): \,\,
 \|v-v_h\|_{L^{p}(0,1)} \leq C_{\text{app}} h^{\min(k,l)} \|v\|_{W^{l,p}(0,1)},\\
\forall v \in C^l[0,1], \,\,\,
 \exists v_h \in S^{(-1)}_{k-1}(I_h): \,\,\, 
 \|v-v_h\|_{C[0,1]} \leq C_{\text{app}} h^{\min(k,l)} \|D^lv\|_{C[0,1]},
\end{aligned}
\]
where $D^l$ is the differential operator of order $l$.
\item[2)] Stability property:
\[\forall v_h \in S^{(l-1)}_{k-1}(I_h), \quad
\|D^lv_h\|_{L^p(0,1)} \leq C_{\text{inv}} n^l \|v_h\|_{L^r(0,1)}
\leq C_{\text{inv}} n^l \|v_h\|_{C[0,1]}.\]
\end{enumerate}
On each subinterval $I_h^i$ we define the local projection $P_n$, using an $L^2(I_h^i)$-orthogonal basis $\{\phi^{I_h^i}_1,\ldots,\phi^{I_h^i}_k\}$ of $\Pi_{k-1}$:
\[
(P_n w)(t) =\sum_{j=1}^k \int_{I_h^i}\phi^{I_h^i}_j(s) w(s) \, ds \, \phi^{I_h^i}_j(t) \, \quad t\in I_h^i.
\]
We consider $P_n$ as a mapping $P_n:L^p(0,1)\to L^p(0,1)$, with range $\mathcal{R}(P_n)=E_n$, 
\[
E_n=\{v\in L^p(0,1)\, \vert \, \forall i=1,\dots,n\, : \, v\vert_{I_h^i}\in \Pi_{k-1}\}\,.
\]
By the $L^2(I_h^i)$ orthogonality of the basis functions, it is easily checked that $P_n^*$ is defined in exactly the same manner, but considered as a mapping $L^{p^*}(0,1)\to L^{p^*}(0,1)$, again with range $\mathcal{R}(P_n^*)=E_n$.
Obviously $I-P_n^*$ annihilates polynomials of degree lower or equal to $k-1$ on each $I_h^i$.

For checking conditions \eqref{bd}, \eqref{bd1}, \eqref{bdDP}, \eqref{kappa*appr0}, \eqref{kappa2*appr0} we can use the following lemma which follows from the approximation property of splines.  
\begin{lemma}\label{appr-error}
Let $A\in L(E,F)$, $E=L^p(0,1)$, $E_n=S^{(-1)}_{k-1}(I_h)$. If  
$F=L^r(0,1)$, $A^*Z_n \subset  W^{l,p^*}(0,1)$ or $F=C[0,1]$, $A^*Z_n \subset C^l[0,1]$, then
\[
\begin{aligned}
&\sup_{w\in E,\normE{w}=1} \sup_{z_n\in Z_n ,\normFd{z_n}=1}\dupF{z_n,A(I-P_n)w}\\
&=\sup_{w\in E,\normE{w}=1} \sup_{z_n\in Z_n ,\normFd{z_n}=1}\dupF{(I-P_n^*)A^*z_n,w} \leq C_{\text{app}} h^{\min(k,l).}
\end{aligned}
\]
\end{lemma}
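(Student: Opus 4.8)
The plan is to first turn the duality pairing into a statement purely about the dual operator $A^*$ and the adjoint projection $P_n^*$, and then to estimate the resulting dual norm by the approximation property of splines.

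\textbf{Step 1 (the identity).} Since $P_n$ is a bounded linear operator on $L^p(0,1)$ (its local $L^2$-stability, uniform in $h$, is the point addressed below), the Banach-space adjoint of $I-P_n$ is $I-P_n^*$, where $P_n^*$ is exactly the operator on $L^{p^*}(0,1)$ described before the lemma. Writing $A^*\in L(F^*,E^*)$ for the adjoint of $A$, for any $w\in E$ and $z_n\in Z_n$ I would compute
\[
\dupF{z_n,A(I-P_n)w}=\dupE{A^*z_n,(I-P_n)w}=\dupE{(I-P_n^*)A^*z_n,w},
\]
which is the first equality asserted in the lemma. Taking the supremum over $\normE{w}=1$ turns the right-hand side into a dual norm, so that
\[
\sup_{w\in E,\normE{w}=1}\dupE{(I-P_n^*)A^*z_n,w}=\normEd{(I-P_n^*)A^*z_n}.
\]

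\textbf{Step 2 (reduction to local approximation).} It then remains to bound $\normEd{(I-P_n^*)A^*z_n}=\|(I-P_n^*)A^*z_n\|_{L^{p^*}(0,1)}$ uniformly over $z_n\in Z_n$ with $\normFd{z_n}=1$. The key observation, already recorded in the text, is that $I-P_n^*$ annihilates every function that is a polynomial of degree $\le k-1$ on each subinterval $I_h^i$; equivalently, $P_n^*$ reproduces the spline space $E_n=S^{(-1)}_{k-1}(I_h)$. Hence, for any spline $v_h\in E_n$,
\[
\|(I-P_n^*)A^*z_n\|_{L^{p^*}}=\|(I-P_n^*)(A^*z_n-v_h)\|_{L^{p^*}}\le(1+\|P_n^*\|_{L^{p^*}\to L^{p^*}})\,\|A^*z_n-v_h\|_{L^{p^*}}.
\]
Choosing $v_h$ to be the spline approximant of $A^*z_n\in W^{l,p^*}(0,1)$ furnished by the approximation property (property 1), together with boundedness of $A^*$ from $F^*$ into $W^{l,p^*}(0,1)$, yields the claimed rate $h^{\min(k,l)}$; the final constant collects $C_{\text{app}}$, the operator norm of $A^*$ into $W^{l,p^*}$ and the stability constant $1+\sup_n\|P_n^*\|$. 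In the case $F=C[0,1]$ I would first use $\|\cdot\|_{L^{p^*}(0,1)}\le\|\cdot\|_{C[0,1]}$ and then run the identical argument with the $C^l[0,1]$ version of the approximation property, the $C$-stability of $P_n^*$ replacing its $L^{p^*}$-stability.

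\textbf{Main obstacle.} The one nonroutine ingredient is the uniform (in $h$) stability of $P_n^*$, i.e.\ $\sup_n\|P_n^*\|_{L^{p^*}\to L^{p^*}}<\infty$ (and likewise on $C[0,1]$). Because $P_n^*$ acts element by element, this reduces by an affine change of variables to boundedness of the single fixed $L^2$-projection onto $\Pi_{k-1}$ on the reference interval, which holds by equivalence of norms on the finite-dimensional space $\Pi_{k-1}$; summing the $L^{p^*}(I_h^i)$ contributions over the $n$ subintervals then produces a global bound independent of $h$. A secondary point that must be made explicit is that the subset hypothesis $A^*Z_n\subset W^{l,p^*}(0,1)$ has to be read as boundedness of $A^*:F^*\to W^{l,p^*}(0,1)$, so that $\|A^*z_n\|_{W^{l,p^*}}$ stays bounded as $\normFd{z_n}=1$ and $n\to\infty$; only under this reading is the resulting estimate uniform and of the stated form.
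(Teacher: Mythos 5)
Your proof is correct and follows essentially the same route the paper intends: the paper gives no detailed argument, stating only that the lemma ``follows from the approximation property of splines,'' and your Step 1 adjoint identity plus the annihilation property $(I-P_n^*)v_h=0$ for $v_h\in S^{(-1)}_{k-1}(I_h)$ is exactly the mechanism behind that claim. Your two supplementary points --- the uniform (in $h$) $L^{p^*}$- and sup-norm stability of the local $L^2$-projection $P_n^*$ via affine scaling, and the reading of $A^*Z_n\subset W^{l,p^*}(0,1)$ (resp.\ $C^l[0,1]$) as uniform boundedness of $A^*$ into that space so that $\|A^*z_n\|_{W^{l,p^*}}$ stays bounded on the dual unit sphere --- correctly make explicit what the paper leaves implicit and are needed for the stated bound to hold with a constant independent of $n$.
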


Due to this lemma, for conditions \eqref{bd}, \eqref{bd1}, \eqref{bdDP}, \eqref{kappa*appr0}, \eqref{kappa2*appr0}, we need the inequality $k\geq l$ and the estimate $\kappa_n\leq C n^l$. We are able to guarantee the latter estimate only for specific operators.

\begin{lemma}\label{kappa-estimate}
Let $A\in L(E,F)$ with $E=L^p(0,1)$, $E_n=S^{(-1)}_{k-1}(I_h)$ and $F=L^r(0,1)$ or $F=C[0,1]$. If for all $w_n \in S^{(-1)}_{k-1}(I_h)$ we have
$\|w_n\|_E\leq C_1\|D^lAw_n\|_E$ and $v_n:=Aw_n \in S^{(l-1)}_{k+l-1}(I_h)$, then 
\be{kappa-est}
\kappa_n=\sup_{w_n\in S^{(-1)}_{k-1}(I_h) }\frac{\|w_n\|_{L^p(0,1)}}{\|Aw_n\|_F} \leq C n^l, \quad C=C_1 C_{\text{inv}}.
\ee
\end{lemma}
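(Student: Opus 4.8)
The plan is to chain together the two hypotheses of the lemma with the inverse (stability) inequality for splines recalled in property 2) above. Fix an arbitrary nonzero $w_n\in S^{(-1)}_{k-1}(I_h)$ and set $v_n:=Aw_n$. By assumption $v_n\in S^{(l-1)}_{k+l-1}(I_h)$, so in particular $v_n\in C^{l-1}[0,1]$, and its $l$-th derivative $D^lv_n$ exists piecewise and lies in $L^p(0,1)$. First I would invoke the assumed a priori bound $\|w_n\|_E\leq C_1\|D^lAw_n\|_E=C_1\|D^lv_n\|_{L^p(0,1)}$, which transfers the estimate of $w_n$ to an estimate of the top derivative of its image $v_n$.

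Next I would apply the stability (inverse) property of splines to $v_n$. Although that property is written for $S^{(l-1)}_{k-1}(I_h)$, it holds verbatim — with a constant $C_{\text{inv}}$ depending only on the fixed polynomial degree and the smoothness order, hence independent of $n$ — for the fixed-degree space $S^{(l-1)}_{k+l-1}(I_h)$ as well. This gives $\|D^lv_n\|_{L^p(0,1)}\leq C_{\text{inv}}\,n^l\,\|v_n\|_{L^r(0,1)}$ in the case $F=L^r(0,1)$, and $\|D^lv_n\|_{L^p(0,1)}\leq C_{\text{inv}}\,n^l\,\|v_n\|_{C[0,1]}$ in the case $F=C[0,1]$; in either case the right-hand norm is exactly $\|v_n\|_F=\|Aw_n\|_F$.

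Combining the two displays yields $\|w_n\|_{L^p(0,1)}\leq C_1C_{\text{inv}}\,n^l\,\|Aw_n\|_F$ for every $w_n\in S^{(-1)}_{k-1}(I_h)$. Dividing by $\|Aw_n\|_F$ and taking the supremum over $w_n$ then produces $\kappa_n\leq C\,n^l$ with $C=C_1C_{\text{inv}}$, as claimed. The computation itself is a two-line chaining of the hypotheses, so no intricate estimates are required.

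The only genuinely delicate point is the use of the inverse inequality on $S^{(l-1)}_{k+l-1}(I_h)$ rather than on the space $S^{(l-1)}_{k-1}(I_h)$ in which property 2) is phrased: one must make sure that $D^lv_n$ is meaningful (guaranteed by the $C^{l-1}$ smoothness encoded in the superscript $l-1$) and that the inverse-inequality constant does not degrade as the degree is raised from $k-1$ to $k+l-1$. Since the degree is fixed and the mesh is uniform, a standard scaling argument on a single reference subinterval shows that $C_{\text{inv}}$ is indeed $n$-independent, so this step is safe and the value $C=C_1C_{\text{inv}}$ in the statement is exactly the one that emerges.
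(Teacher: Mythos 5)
Your proof is correct and follows essentially the same route as the paper's: chain the hypothesis $\|w_n\|_E\leq C_1\|D^lAw_n\|_E$ with the spline stability (inverse) property applied to $v_n=Aw_n\in S^{(l-1)}_{k+l-1}(I_h)$, yielding $\|w_n\|_E\leq C_1C_{\text{inv}}n^l\|Aw_n\|_F$ and hence the bound on $\kappa_n$. Your extra remark justifying why the inverse inequality, stated for degree $k-1$, still holds with an $n$-independent constant for the higher-degree space $S^{(l-1)}_{k+l-1}(I_h)$ is a point the paper passes over silently, and it is a welcome clarification rather than a deviation.
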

\begin{proof}
Let $w_n \in S^{(-1)}_{k-1}(I_h)$ satisfy the above assumptions. Since $v_n:=Aw_n$ is a spline of order $k+l-1$ with increased global smoothness, one can  apply the stability property of splines to $v_n$:   
$\|w_n\|_E\leq C_1\|D^l Aw_n\|_E \leq  C_1 C_{\text{inv}} n^l \|Aw_n\|_F.$
\end{proof}

The conditions of Lemmas \ref{appr-error} and \ref{kappa-estimate} are satisfied for integral equations of the first kind
\be{int-eq}
(A u)(t):= \int_0^1 K(t,s) u(s)\,ds =f(t),\ t\in [0,1], 
\ee
whose kernels are Green's functions for the differential operator $D^l, l \in \N$ under different homogeneous boundary conditions, such that the equation $D^lz=0$  has only the trivial solution $z=0$. Here $K(t,s)$ has different forms $K_1(t,s)$ and $K_2(t,s)$  for regions $0\leq s < t\leq 1$ and $0\leq t \leq  s\leq 1$ respectively. 
Note that a Green's function of $D^l$ with boundary conditions $f^{(j)}(0)=0, j=0,1,\dots,l-1$  is given by the Volterra kernel $K_1(t,s)=(t-s)^{l-1}/(l-1)!$, $K_2(t,s)=0$. For $l=2$ and boundary conditions $f(0)=f(1)=0$ we have $K_1(t,s)=s(t-1)=K_2(s,t)$, for $l=4$ and boundary conditions $f(0)=f'(0)=f(1)=f'(1)=0$ we have $K_1(t,s)=-s^2(1-t)^2(s+2st-3t)/6=K_2(s,t)$. 
Let us formulate the convergence theorem. 
\begin{theorem}\label{convGP-LS}
Consider $A\in 
L(E,F)$ defined by (\ref{int-eq}) with  $E=L^p(0,1)$, $1<p<\infty$, where 
\[
\begin{aligned}
&F=L^r(0,1), 1<r<\infty\mbox{ and }f \in W^{l,r}(0,1)\mbox{ or }\\
&F=C[0,1]\mbox{ and }f \in C^l[0,1]
\end{aligned} 
\]
is assumed. Let $K(t,s)$ be a Green's function of $D^l$ with homogeneous boundary conditions such that $D^lz=0$  has only the trivial solution $z=0$, let $f(t)$ satisfy these boundary conditions.\\ 
Then the following statements hold:

(i) Equation (\ref{int-eq}) has a unique solution $\usol$. 

(ii) Let  $E_n=S^{(-1)}_{k-1}(I_h)$ with $k\geq l$.
Then the least squares method determines a unique approximation $u_n \in E_n$ for all $n \in \N$.

(iii) If \eqref{nsp} hold the general projection method (\ref{projmeth}) determines a unique approximation $u_n \in E_n$  for all $n \in \N$.

Under these assumptions we have for both methods convergence $\|u_n-\usol\| \to 0$ as $n \to \infty$ in case of exact data $\delta=0$. In case of noisy data one has convergence $\|u_{n(\delta)}-\usol\| \to 0$ as $\delta  \to 0$, if $n=n(\delta)$ is chosen a priori such that $n(\delta) \to \infty$, $n(\delta)^l \delta \to 0  \text{ as } \delta  \to 0$ or a posteriori according to the discrepancy principle, where in the least squares method $b>1$, while in the general projection method 
assumptions \eqref{tau} and $b>\tau+1$ are assumed.   
\end{theorem}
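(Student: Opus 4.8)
The plan is to reduce everything to the two building blocks of Lemmas~\ref{appr-error} and \ref{kappa-estimate} and then feed the outcome into the abstract convergence theorems of Sections~\ref{sec_genproj} and \ref{sec_leastsquares}. The starting observation is structural: since $K$ is a Green's function of $D^l$ with boundary conditions for which $D^lz=0$ forces $z=0$, the operator $A$ is exactly the solution operator of a uniquely solvable boundary value problem, so $D^lA=I$ on $E=L^p(0,1)$ and $A$ is injective. This immediately settles (i): the assumption that $f$ obeys the boundary conditions and lies in $W^{l,r}$ (resp.\ $C^l$) makes $\usol:=D^lf$ a solution, while injectivity, i.e.\ $\mathcal{N}(A)=\{0\}$, makes it the only one.

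For (ii) and (iii) I would simply feed this injectivity into the well-definedness lemmas. Condition \eqref{nsp1} is nothing but $\mathcal{N}(A)\cap E_n=\{0\}$ and so holds trivially, whence Lemma~\ref{welldef_leastsquares} gives existence of the least squares approximation, with uniqueness whenever $\Phi_F$ is strictly convex, i.e.\ for $F=L^r$, $r\in(1,\infty)$ (the nonstrictly convex case $F=C[0,1]$ would need a separate Haar-subspace uniqueness argument). For the general projection method, \eqref{nsp} is assumed and, together with the dimension matching \eqref{dim}, Lemma~\ref{welldef_projmeth} yields unique solvability.

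The heart of the argument is the stability--approximation trade-off. Because $D^lA=I$, the hypotheses of Lemma~\ref{kappa-estimate} hold with $C_1=1$: applying $A$ to a piecewise polynomial $w_n\in S^{(-1)}_{k-1}(I_h)$ raises its degree to $k+l-1$ and its global smoothness to $C^{l-1}$, so $Aw_n\in S^{(l-1)}_{k+l-1}(I_h)$ and $\normE{w_n}=\normE{D^lAw_n}$, giving $\kappa_n\leq Cn^l$. On the other hand $k\geq l$ gives $\min(k,l)=l$ in Lemma~\ref{appr-error}, so the projection defect obeys $\sup_{w,z_n}\dupF{z_n,A(I-P_n)w}\leq C_{\text{app}}n^{-l}$; bounding a genuine $F$-norm rather than a duality pairing proceeds the same way, using that $A^*$ is $l$-smoothing so that $\sup_{\normFd{z}=1}\normEd{(I-P_n^*)A^*z}\leq Cn^{-l}$. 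Multiplying the two estimates, the products $\kappa_n n^{-l}$ (and, through $\tilde\kappa_n\leq\tau\kappa_n$ under \eqref{tau}, also $\tilde\kappa_n n^{-l}$) stay bounded, which is precisely the uniform boundedness conditions \eqref{bd}, \eqref{bd1}, \eqref{bdDP} for the general projection method and \eqref{kappa*appr0}, \eqref{kappa2*appr0} for the least squares method, with $P_n$ the local spline projection and $\hat u_n=P_n\usol$; the approximation property \eqref{convPn}/\eqref{convPPn} is just the spline estimate applied to $\usol$.

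With these conditions in hand the convergence statements follow by citing the abstract results. For exact data, Theorem~\ref{conv_projmeth_apriori} (resp.\ Theorem~\ref{conv_leastsquares_apriori_gen}) gives $\normE{u_n-\usol}\to0$. For noisy data the a priori rule $n(\delta)^l\delta\to0$ forces $\kappa_{n(\delta)}\delta\to0$ (hence $\tilde\kappa_{n(\delta)}\delta\to0$), which is \eqref{aprioristoppingrule}/\eqref{n_choice}, so the same theorems apply; the discrepancy-principle case is covered by Theorem~\ref{conv_leastsquares_dp} (with $b>1$) and Theorem~\ref{conv_projmeth_dp} (with \eqref{tau} and $b>\tau+1$), where uniqueness of $\usol$ from (i) upgrades the subsequential convergence to convergence of the whole family. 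The step I expect to be most delicate is verifying \eqref{bdDP} for the general projection method, where one must control $\normF{(I-Q_n')A(I-P_n)w}$ in the honest $F$-norm. This reduces to $\normF{A(I-P_n)w}$ since $\normF{(I-Q_n')g}=\mathrm{dist}(g,AE_n)\leq\normF{g}$, but the resulting $n^{-l}$ bound rests on the smoothing of $A^*$ into $W^{l,p^*}$ (resp.\ $C^l$); in the $F=C[0,1]$/collocation setting the functions $A^*z$ are only piecewise smooth across the collocation knots, so there the order $h^l$ has to be argued knot by knot rather than from global $C^l$ regularity.
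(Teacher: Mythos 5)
Your proposal is correct and follows essentially the same route as the paper's own (much terser) proof: verify the hypotheses of Lemmas \ref{appr-error} and \ref{kappa-estimate} via $D^lA=I$ (so $\normE{w_n}=\normE{D^lAw_n}$ with $C_1=1$) and $Aw_n\in S^{(l-1)}_{k+l-1}(I_h)$, then invoke Theorems \ref{conv_leastsquares_apriori_gen} and \ref{conv_leastsquares_dp} for least squares and Theorems \ref{conv_projmeth_apriori} and \ref{conv_projmeth_dp} together with \eqref{kappa_tau_n} and \eqref{tau} for the general projection method. Your two flagged caveats --- uniqueness of the least squares minimizer when $F=C[0,1]$, where $\Phi_F$ is not strictly convex, and the merely piecewise smoothness of $A^*z_n$ across knots in the collocation setting --- are genuine subtleties that the paper's proof glosses over, not defects of your argument.
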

\begin{proof}
The assumptions of Lemmas \ref{appr-error} and \ref{kappa-estimate} are satisfied, since for any $w \in E$ we have $\|w\|_E=\|D^lAv\|_E$ and for any $w_n \in E_n$ we have $Aw_n \in E_n$ with increased power and global smoothness of the spline.
The assertions follow with Lemmas \ref{appr-error}, \ref{kappa-estimate} 
from Theorems \ref{conv_leastsquares_apriori_gen}, \ref{conv_leastsquares_dp} for the least squares method, and
from Theorems \ref{conv_projmeth_apriori}, \ref{conv_projmeth_dp}, and inequalities 
\eqref{kappa_tau_n}
\eqref{tau} 
for the general projection method, respectively.
\end{proof}

One can compare the above results to their counterparts in Hilbert spaces (see \cite{VH85}).
  For the general projection method (\ref{projmeth}), the Hilbert space analog of Theorems \ref{conv_projmeth_apriori}, \ref{conv_projmeth_dp} is the following.
 \begin{theorem}\label{Hilbert-GP}
Let $A\in L(E,F)$, where $E$ and $F$ are Hilbert spaces. Let $f \in \mathcal{R}(A)$ and $P_n:E \to E_n$, $Q_n:F \to F_n$, $Q_n':F \to AE_n$ be orthoprojectors, where $F_n$ are finite dimensional subspaces of $F$. Let  
the following conditions (i)-(iii) hold:
\begin{enumerate}
\item[(i)] $\forall u\in E \, : \ \normE{P_n u-u}\to0\mbox{ as }n\to\infty$,
\item[(ii)] $\forall n\in \N\, : \ \mathcal{N}(A^*)\cap F_n=\{0\}$,
\item[(iii)] $\exists \tau^* <\infty\, \forall z_n\in F_n\, : \ \tau^* \normE{P_n A^* z_n}\geq\normE{A^* z_n}$. 
\end{enumerate}

Then equations $Au=f$ and (\ref{projmethQn}) have unique solutions $\usol\in E$ and $u_n\in E_n$ respectively. If $\delta=0$, then  $\normE{u_{n}-\usol}\to0$ as $n\to \infty$ ((i)-(iii) are  necessary and sufficient conditions of this convergence for arbitrary $f \in \mathcal{R}(A)$).
If $\delta>0$, then for an a priori choice of $n=n(\delta)$ such that
$n(\delta)\to\infty, \quad \delta\cdot\kappa_{n(\delta)}^*\to0 \quad (\delta \to 0)$
one has $\normE{u_{n(\delta)}-\usol}\to0$ as $\delta\to0$. If $\delta>0$ and the following additional conditions (iv)-(vi) hold  
\begin{enumerate}
\item[(iv)] $ \mathcal{N}(A)\cap E_n=\{0\}$, 
\item[(v)] $\exists \tau<\infty \ \forall v_n\in E_n \, : \ \tau\normF{Q_n Av_n}\geq \normF{A v_n}$,
\item[(vi)] $\exists C<\infty \ \forall n\in\N \, : \ \kappa_{n+1}\normF{(I-Q'_n)A}\leq C$, 
\end{enumerate}
then convergence $\normE{u_{n(\delta)}-\usol}\to0$ as $\delta\to0$ also holds for a choice of $n=n(\delta)$ by the discrepancy principle with $b>\tau$.
\end{theorem}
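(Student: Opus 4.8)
The plan is to recognize this as the Hilbert space specialization of the general projection method, so that the proof reduces to checking that conditions (i)--(vi) imply the abstract hypotheses of Theorems \ref{conv_projmeth_apriori} and \ref{conv_projmeth_dp}, with the orthoprojectors $P_n$, $Q_n$, $Q_n'$ playing the roles of the projections used there and with $Z_n$ identified with $F_n$ via the Riesz representation $F^*\cong F$. In the Hilbert space setting the duality mappings become identities, $A^*$ is the Hilbert adjoint, and all three quantities $\tilde\kappa_n$, $\kappa_n$, $\kappa_n^*$ coincide up to the factor $\tau$ appearing in \eqref{kappa_tau_n}; in particular $\kappa_n^*=\sup_{z_n\in F_n}\normFd{z_n}/\normEd{A^*z_n}$ matches \eqref{kappa} specialized to Hilbert spaces. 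First I would establish unique solvability: condition (iv) is exactly \eqref{nsp} (since $Q_nA$ and $A$ have the same null space on $E_n$ when $\mathcal{N}(A^*)\cap F_n=\{0\}$), so Lemma \ref{welldef_projmeth} applies once \eqref{dim} is read off from $\mathrm{dim}(E_n)=\mathrm{dim}(F_n)$, which itself follows from (ii) together with the identification $Z_n=F_n$.

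Next I would treat the exact-data and a priori cases. The approximation property \eqref{convPn} holds with $\hat u_n=P_n\usol$ by condition (i). For the stability-approximation trade-off \eqref{bd_conv}, the key is to rewrite, using self-adjointness of the orthoprojectors, the crucial supremum as
\[
\sup_{z_n\in F_n,\,\normF{z_n}=1}\langle z_n,A(I-P_n)\usol\rangle
=\sup_{z_n\in F_n,\,\normF{z_n}=1}\langle (I-P_n)A^*z_n,\usol\rangle,
\]
so that condition (iii) controls $\normE{A^*z_n}$ by $\normE{P_nA^*z_n}$ and converts the quasi-optimality factor $\tilde\kappa_n$ (equivalently $\kappa_n^*$ in the Hilbert setting) times this supremum into something dominated by $\normE{(I-P_n)\usol}\to0$. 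This is the Hilbert space instance of the boundedness condition \eqref{bd}, and it yields both the exact-data convergence and, under the a priori rule $n(\delta)\to\infty$, $\delta\kappa_{n(\delta)}^*\to0$, the noisy-data convergence, directly from Theorem \ref{conv_projmeth_apriori}.

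For the discrepancy principle I would invoke Theorem \ref{conv_projmeth_dp}: condition (v) is precisely the uniform bound \eqref{tau} on $\tau_n$, condition (vi) is the Hilbert space form of \eqref{bdDP}, and (i) supplies \eqref{convPPn}. One subtlety worth flagging is the threshold $b>\tau$ here versus $b>1+\tau$ in Theorem \ref{conv_projmeth_dp}; in the Hilbert space case the orthoprojector $Q_n'$ gives the sharper discrepancy estimate $\normF{Au_n-\fdel}\le\tau\,\mathrm{dist}(\fdel,AE_n)$ rather than the factor $(\tau+1)$ in \eqref{est_discr}, because $Au_n=Q_n'\fdel$ realizes the metric projection exactly, so the noise-free residual term drops out. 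The necessity direction of (i)--(iii) for exact-data convergence is the genuinely new content not covered by the abstract theorems; the hard part will be this converse. There I would argue by contradiction, constructing for a failing condition a right-hand side $f\in\mathcal{R}(A)$ and a sequence of noise-free data for which $\normE{u_n-\usol}\not\to0$, exploiting the uniform boundedness principle to turn pointwise divergence of $\|\tilde P_n\|$ (or unboundedness of $\kappa_n^*\normE{(I-P_n)A^*z_n}$) into a single bad datum, following the classical argument in \cite{VH85}.
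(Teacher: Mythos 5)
First, a point of reference: the paper itself contains no proof of Theorem \ref{Hilbert-GP}. It is quoted from \cite{VH85} as the Hilbert space counterpart of Theorems \ref{conv_projmeth_apriori} and \ref{conv_projmeth_dp}, for comparison purposes only, so your proposal has to stand on its own merits --- and its central reduction fails. You claim that condition (iii) is ``the Hilbert space instance of the boundedness condition \eqref{bd}'' and that convergence then follows directly from Theorem \ref{conv_projmeth_apriori}. But the paper explicitly warns, right after the theorem, that conditions (iii), (v) ``seem to be weaker than the corresponding conditions \eqref{bd}, \eqref{bd1}, \eqref{bdDP}'', and indeed they are: from (iii) and the Pythagorean identity one only gets the pointwise bound $\normE{(I-P_n)A^*z_n}\le\sqrt{(\tau^*)^2-1}\,\normE{P_nA^*z_n}$, whereas \eqref{bd}/\eqref{bd_conv} require control of the \emph{product of two separate suprema}, namely $\tilde\kappa_n\sup_{\normF{z_n}=1}\normE{(I-P_n)A^*z_n}$; since $\tilde\kappa_n=\|(P_nA^*|_{F_n})^{-1}\|$, this product behaves like $\sqrt{(\tau^*)^2-1}$ times the discrete condition number $\max_{\normF{z_n}=1}\normE{P_nA^*z_n}\,/\min_{\normF{z_n}=1}\normE{P_nA^*z_n}$, which is typically unbounded for ill-posed problems. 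The actual Hilbert argument must couple the test element to the error: with $w_n=P_n\usol-u_n$ one has $\langle P_nA^*z_n,w_n\rangle=\langle(I-P_n)A^*z_n,(P_n-I)\usol\rangle$ for all $z_n\in F_n$; choosing $z_n$ with $P_nA^*z_n=w_n$ (possible since (ii)+(iii) make $P_nA^*|_{F_n}$ injective, hence surjective onto $E_n$ under the implicit dimension matching) yields the quasi-optimality estimate $\normE{w_n}\le\sqrt{(\tau^*)^2-1}\,\normE{(I-P_n)\usol}$, which with (i) gives exact-data convergence. This matching step has no counterpart in the product-form condition \eqref{bd_conv}, so the theorem cannot be obtained by verifying the hypotheses of Theorem \ref{conv_projmeth_apriori}. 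Relatedly, your derivation of \eqref{dim} is wrong: (ii) asserts injectivity of $A^*$ on $F_n$ and implies nothing about $\mathrm{dim}(E_n)=\mathrm{dim}(F_n)$; the square structure of the discrete system must be taken from the setting of \cite{VH85}, not deduced.

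Your explanation of the threshold $b>\tau$ rests on a false identity: for the \emph{general projection method}, $u_n$ solves $Q_nAu_n=Q_n\fdel$ with $Q_n$ the projector onto $F_n$; the relation $Au_n=Q_n'\fdel$ characterizes the \emph{least squares} method, so ``$Au_n$ realizes the metric projection exactly'' is incorrect and the noise-free residual term in \eqref{est_discr} does not drop out. Orthogonality does improve \eqref{est_discr}: writing $Aw_n=Q_n'\fdel$, Pythagoras gives $\normF{Au_n-\fdel}^2=\normF{A(u_n-w_n)}^2+\normF{(I-Q_n')\fdel}^2\le(\tau^2+1)\,\mbox{dist}(\fdel,AE_n)^2$, but this lowers the admissible threshold from $\tau+1$ only to $\sqrt{\tau^2+1}>\tau$; reaching $b>\tau$ requires the finer analysis of \cite{VH85}, which you do not supply. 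Finally, the necessity of (i)--(iii) --- which you correctly identify as the content not covered by any of the paper's abstract theorems --- is left at the level of a one-sentence plan via the uniform boundedness principle, so the converse direction remains essentially unproven.
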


Note that in Hilbert spaces, conditions (iii), (v) are automatically fulfilled by the least error method $(E_n=A^*F_n)$  and by the least squares method $(F_n=AE_n)$  respectively, and that for condition (vi) the inequality $\|(I-Q'_n)A\|\leq\|(I-P_n)(A^*A)^\frac{1}{2l}\|^l, \forall l \in \N$ is useful.
Conditions (iii), (v) here seem to be weaker than the corresponding conditions (\ref{bd}), (\ref{bd1}), (\ref{bdDP}) in the Banach space theorems.

For the least squares method (\ref{leastsquares}), the Hilbert space analog of Theorems \ref{conv_leastsquares_apriori_gen}, \ref{conv_leastsquares_dp} is Theorem \ref{Hilbert-LS-gen} and the analog of Theorem \ref{convGP-LS} is Theorem \ref{Hilbert-LS-Green}.

\begin{theorem}\label{Hilbert-LS-gen}
Let $A\in L(E,F)$, where $E$, $F$ - Hilbert spaces, $\mathcal{N}(A)=\{0\}$, $f \in \mathcal{R}(A)$, $P_n:E \to E_n$ orthoprojector,  let $\|P_n u-u\|\to0$ as $n\to\infty$ for all $u\in E$, and let
\be{bdLS-H}
 \exists l \in \N \quad \exists C<\infty: \quad(\kappa_{n} +\kappa_{n+1})\normF{(I-P_n)(A^*A)^{\frac{1}{2l}}}^l \leq C\quad \forall n \in \N.
\ee
 Then equations $Au=f$ and (\ref{leastsquares}) have unique solutions 
  $\usol\in E$ and $u_n\in E_n$ respectively. If $\delta=0$, then  $\|u_{n}-\usol\|\to0$ as $n\to \infty$. 
If $\delta>0$, then $\|u_{n(\delta)}-\usol\|\to0$ as $\delta\to0$ for an a priori choice of $n=n(\delta)$ such that
$n(\delta)\to\infty, \quad \delta\cdot\kappa_{n(\delta)}\to0$ as $\delta \to 0$
and also for a choice of $n=n(\delta)$ according to the discrepancy principle with $b>1$.
\end{theorem}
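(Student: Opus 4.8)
The plan is to obtain all three assertions by specializing the abstract least squares results already established and verifying their hypotheses in the Hilbert space situation: Theorem \ref{conv_leastsquares_apriori_gen} for the a priori choice and Theorem \ref{conv_leastsquares_dp} for the discrepancy principle. First I would settle well-definedness. Since $E,F$ are Hilbert spaces, the functional $\Phi_F=\tfrac{1}{2}\normF{\cdot}^2$ (case $q=2$) is strictly convex, and $\mathcal{N}(A)=\{0\}$ forces $\mathcal{N}(A)\cap E_n=\{0\}$, that is \eqref{nsp1}; hence Lemma \ref{welldef_leastsquares} yields a unique $u_n$ for every $n$, while $f\in\mathcal{R}(A)$ together with injectivity of $A$ produces the unique exact solution $\usol$. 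Here $P_n$ is the orthoprojector onto $E_n$, so \eqref{P_n} holds and \eqref{convPPn} is exactly the standing assumption $\normE{P_nu-u}\to0$.

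The core step is to exhibit a sequence $\hat{u}_n\in E_n$ satisfying the approximation property \eqref{convPn} and the trade-off condition \eqref{kappa*appr} (respectively its two-term version \eqref{kappa2*appr0_conv}); the conclusions then follow from the two abstract theorems with the parameter rules \eqref{n_choice} and \eqref{nDP} with $b>1$, and subsequential convergence upgrades to full norm convergence because injectivity of $A$ makes $\usol$ the \emph{unique} solution of \eqref{Auf}. I would take $\hat{u}_n=P_n\usol$, so \eqref{convPn} is exactly the assumed $\normE{(I-P_n)\usol}\to0$, and it remains to control $\kappa_n\normF{A(I-P_n)\usol}$. The guiding idea, as in Remark \ref{LSconvcond}, is to use idempotency $I-P_n=(I-P_n)^2$ to peel off a decaying factor $\normE{(I-P_n)\usol}$, leaving an operator factor to be absorbed by $\kappa_n$ through \eqref{bdLS-H}.

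For $l=1$ this closes immediately: from $\normF{Av}=\normE{(A^*A)^{1/2}v}$ and self-adjointness of $I-P_n$ one gets $\norm{A(I-P_n)}=\norm{(I-P_n)(A^*A)^{1/2}}$, so \eqref{bdLS-H} is precisely the $l=1$ bound \eqref{kappa*appr0}, and $\kappa_n\normF{A(I-P_n)\usol}\le\kappa_n\norm{A(I-P_n)}\,\normE{(I-P_n)\usol}\le C\normE{(I-P_n)\usol}\to0$. For $l\ge2$ the factor $\kappa_n\norm{A(I-P_n)}$ is no longer bounded by \eqref{bdLS-H}, and the argument must instead pass through the orthoprojector $Q_n'$ onto $AE_n$. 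Exploiting the orthogonality $(I-Q_n')AP_n=0$ together with the moment inequality $\norm{(I-Q_n')A}\le\norm{(I-P_n)(A^*A)^\frac{1}{2l}}^l$ announced after Theorem \ref{Hilbert-GP}, I would write
\[
\mbox{dist}(f,AE_n)=\norm{(I-Q_n')A(I-P_n)\usol}\le\norm{(I-P_n)(A^*A)^\frac{1}{2l}}^l\,\normE{(I-P_n)\usol},
\]
whence $\kappa_n\,\mbox{dist}(f,AE_n)\le C\normE{(I-P_n)\usol}\to0$; together with $\kappa_n\delta\to0$ this drives the residual/approximation term in \eqref{ls_error} to zero.

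The hard part will be precisely this last point for $l\ge2$: first proving the non-commutative moment inequality $\norm{(I-Q_n')A}\le\norm{(I-P_n)(A^*A)^\frac{1}{2l}}^l$, and then, more delicately, upgrading the boundedness condition \eqref{bdLS-H} to the genuine convergence conditions \eqref{kappa*appr}, \eqref{kappa2*appr0_conv}. The difficulty is that $A^*A$ does not commute with the projections $P_n$, so the component of $A(I-P_n)\usol$ tangential to $AE_n$ is \emph{not} captured by $\mbox{dist}(f,AE_n)$ and cannot be absorbed by the naive idempotency factorization once $l\ge2$; this is exactly where the two-level factor $\kappa_n+\kappa_{n+1}$ in \eqref{bdLS-H} must be used, passing from level $n$ to level $n+1$ in the spirit of the discrepancy estimate \eqref{estQmp}. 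For the discrepancy principle itself I would invoke Theorem \ref{conv_leastsquares_dp} with $b>1$, deriving \eqref{kappa2*appr0_conv} by the same route, obtaining finiteness of $n_{DP}(\delta)$ and subsequential convergence, and concluding full convergence from uniqueness of $\usol$.
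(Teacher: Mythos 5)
Your proposal cannot be checked against a proof in the paper, because the paper gives none: Theorem \ref{Hilbert-LS-gen} is imported from \cite{VH85} as the Hilbert-space benchmark against which the Banach-space Theorems \ref{conv_leastsquares_apriori_gen}, \ref{conv_leastsquares_dp} are compared. Your strategy of deriving it by specializing exactly those two theorems is sound only for $l=1$: there your verification is correct and complete — well-definedness via Lemma \ref{welldef_leastsquares} with $q=2$, $\hat u_n=P_n\usol$, and, since $I-P_n$ is a self-adjoint idempotent, $\norm{A(I-P_n)}=\norm{(I-P_n)(A^*A)^{1/2}}$, so that \eqref{bdLS-H} with $l=1$ is precisely \eqref{kappa*appr0} and Remark \ref{LSconvcond} closes the argument, with uniqueness of $\usol$ upgrading subsequential to full convergence.

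For $l\geq2$ the difficulty you flag is not merely ``the hard part'' — it is a genuine gap that defeats this route altogether. Theorems \ref{conv_leastsquares_apriori_gen} and \ref{conv_leastsquares_dp} require \eqref{kappa*appr} resp.\ \eqref{kappa2*appr0_conv}, i.e.\ smallness of the \emph{full} term $\kappa_n\normF{A(I-P_n)\usol}$, whereas your moment-inequality estimate controls only its normal component $\kappa_n\normF{(I-Q_n')A(I-P_n)\usol}=\kappa_n\,\mbox{dist}(f,AE_n)$; the tangential part $Q_n'A(I-P_n)\usol$ is left unbounded, and the inequality that would absorb it, $\norm{A(I-P_n)}\leq\norm{(I-P_n)(A^*A)^{\frac{1}{2l}}}^l$, is simply false: already in $\R^2$, with $S=\mathrm{diag}(1,0)$ and $I-P$ the orthoprojector onto the span of $v=(\sqrt t,\sqrt{1-t})$, one has $\norm{(I-P)S^2}=\sqrt t>t=\norm{(I-P)S}^2$ for $t\in(0,1)$ — this is exactly why the valid inequality quoted after Theorem \ref{Hilbert-GP} carries $Q_n'$, not $P_n$, on the left. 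The paper itself states the obstruction explicitly after Theorem \ref{Hilbert-LS-Green}: its Banach-space least squares theorems need \eqref{kappa*appr0}, \eqref{kappa2*appr0}, ``corresponding to the special case $l=1$ in \eqref{bdLS-H}'', and it even exhibits the regime where your derivation breaks while the theorem still holds — $\mathcal{R}(A^*)\subset W^{l,2}$ and $E_n=S^{(-1)}_{k-1}(I_h)$ with $k<l$, where \eqref{bdLS-H} is satisfied for all $k$ but \eqref{kappa*appr0} fails. A correct proof for general $l$ (as in \cite{VH85}) must therefore reorganize the error analysis around the solution operator $B_n$, the oblique projection $\tilde P_n=B_nA$ and distances $\mbox{dist}(A\usol,AE_n)$ in $F$, instead of passing through the estimate \eqref{ls_error} with $w_n=P_n\usol$; your closing appeal to ``the two-level factor $\kappa_n+\kappa_{n+1}$ in the spirit of \eqref{estQmp}'' names plausible ingredients but supplies no actual mechanism.
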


\begin{theorem}\label{Hilbert-LS-Green}
Let $E=F=L^2(0,1)$, $K(s,t)$ in (\ref{int-eq}) be a Green's function of the differential operator
\[
L_lz=\sum_{j=0}^l b_j(t)z^{(j)}, \quad b_j \in C[0,1],\quad b_m(t) \not=0 \quad\forall t \in(0,1)
\]
with boundary conditions 
$\sum_{j=0}^{l-1} \alpha_{i,j}z^{(j)}(0)+\beta_{i,j}z^{(j)}(1)=0$, $(i=1,\dots,l)$ 
such that $L_lz=0$ only has the trivial solution $z=0$, and let
$f(t)$ satisfy these boundary conditions. Then 
equation (\ref{int-eq}) has a unique solution $\usol$ and the least squares method with $E_n=S^{(-1)}_{k-1}(I_h)$  determines
a unique approximation $u_n \in E_n \quad \forall n, k \in \N$. 
Convergence $\normE{u_{n(\delta)}-\usol}\to0$ as $\delta\to0$ holds with an a priori choice of $n=n(\delta)$ such that
$n(\delta)\to\infty, \quad \delta\cdot{n(\delta)}^l\to0 \quad (\delta \to 0)$
and also with a choice of $n=n(\delta)$ by the discrepancy principle with $b>1$.
\end{theorem}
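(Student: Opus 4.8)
The plan is to deduce the theorem from the general Hilbert-space least squares result, Theorem~\ref{Hilbert-LS-gen}, in exactly the way Theorem~\ref{convGP-LS} was obtained in the Banach setting; the only genuine work lies in verifying the boundedness condition~\eqref{bdLS-H}.

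First I would settle existence, uniqueness and well-definedness. Since $K$ is the Green's function of $L_l$ under boundary conditions for which $L_lz=0$ has only the trivial solution, the integral operator $A$ satisfies $L_lAw=w$ for every $w\in E$; in particular $A$ is injective, so $\mathcal{N}(A)=\{0\}$ and \emph{a fortiori} \eqref{nsp1} holds for every $E_n$. As $f$ satisfies the boundary conditions and is smooth enough (say $f\in W^{l,2}(0,1)$), the element $\usol:=L_lf\in L^2(0,1)$ solves $A\usol=f$, which gives existence of a solution while uniqueness follows from injectivity; thus also $f\in\mathcal{R}(A)$. Lemma~\ref{welldef_leastsquares}, applied with $F=L^2(0,1)$ where $\Phi_F=\tfrac12\normF{\cdot}^2$ is strictly convex, then yields a unique least squares approximation $u_n\in E_n$ for every $n,k\in\N$.

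Next I would check the remaining hypotheses of Theorem~\ref{Hilbert-LS-gen}. With $P_n$ the $L^2$-orthogonal projector onto $E_n=S^{(-1)}_{k-1}(I_h)$, the convergence $\normE{P_nu-u}\to0$ for all $u\in E$ follows from density of the splines in $L^2(0,1)$. The decisive point is \eqref{bdLS-H}, which I would split into two estimates. On the one hand, an inverse estimate $\kappa_n\le Cn^l$ for $\kappa_n$ as in~\eqref{kappa_n}: for $w_n\in E_n$ one has $\normE{w_n}=\normE{L_lAw_n}\le C\,\|Aw_n\|_{W^{l,2}(0,1)}$ because the coefficients $b_j$ are continuous, and a localized scaling argument on the subintervals $I_h^i$---imitating the spline stability property but now applied to the finite dimensional space $AE_n$---bounds $\|Aw_n\|_{W^{l,2}(0,1)}\le Cn^l\normF{Aw_n}$. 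On the other hand, the smoothing estimate $\|(I-P_n)(A^*A)^{1/(2l)}\|\le C/n$: the operator $A$ maps $L^2(0,1)$ boundedly into $W^{l,2}(0,1)$, so $A^*A$ gains $2l$ derivatives, and by the theory of fractional powers (interpolation) $(A^*A)^{1/(2l)}$ maps $L^2(0,1)$ boundedly into $W^{1,2}(0,1)$; combined with the first order spline approximation $\normE{(I-P_n)g}\le Ch\,\|g\|_{W^{1,2}(0,1)}$, which holds for every $k\ge1$, this gives the claim and explains why all $k\in\N$ are admissible. Multiplying the two bounds yields $(\kappa_n+\kappa_{n+1})\,\|(I-P_n)(A^*A)^{1/(2l)}\|^l\le Cn^l(C/n)^l=C'$, i.e.~\eqref{bdLS-H}.

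Once \eqref{bdLS-H} is in place, Theorem~\ref{Hilbert-LS-gen} delivers convergence for the a priori rule (here $\delta\,n(\delta)^l\to0$ forces $\delta\,\kappa_{n(\delta)}\to0$ since $\kappa_n\le Cn^l$) and for the discrepancy principle with $b>1$. The hard part will be the verification of \eqref{bdLS-H}: the fractional-power mapping estimate is routine once the $L^2\to W^{l,2}$ smoothing of $A$ is known, but the inverse inequality $\kappa_n\le Cn^l$ is delicate for variable coefficients, because $Aw_n$ is an $L_l$-adapted (Chebyshevian) spline rather than a genuine polynomial spline, so the elementary stability property of polynomial splines does not apply verbatim and must be replaced by a scaling argument on $AE_n$ exploiting the uniform continuity and the nondegeneracy of the leading coefficient of $L_l$.
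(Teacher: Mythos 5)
Your overall architecture coincides with the route the paper intends: the paper does not actually prove Theorem \ref{Hilbert-LS-Green} itself but recalls it from \cite{VH85}, and the remark following the theorem sketches exactly your reduction --- apply Theorem \ref{Hilbert-LS-gen} after verifying \eqref{bdLS-H} via the two bounds $\kappa_n\leq Cn^l$ and $\norm{(I-P_n)(A^*A)^{1/(2l)}}\leq C/n$, the latter obtained from $\mathcal{R}((A^*A)^{1/2})=\mathcal{R}(A^*)\subset W^{l,2}(0,1)$, the moment/interpolation inequality for fractional powers of $A^*A$, and first-order spline approximation, valid for every $k\in\N$ (which is precisely why no constraint $k\geq l$ appears here, in contrast to Theorem \ref{convGP-LS}). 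Your well-posedness discussion ($L_lAw=w$, hence $\mathcal{N}(A)=\{0\}$ and \eqref{nsp1}; $\usol=L_lf$ once $f\in W^{l,2}(0,1)$ and satisfies the boundary conditions) is also the standard one, and you correctly noticed that the smoothness of $f$ is only implicit in the statement.

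Where you genuinely diverge is the inverse estimate $\kappa_n\leq Cn^l$ for variable coefficients, and this is also where your proposal is incomplete. The source the paper relies on handles this step with the quoted Lemma ($\normE{B^*v}\geq C_1\norm{D^{(-l)}v}$, proved in \cite{VH85} for $q=r=2$): it yields $\normF{Aw_n}\geq C_1\norm{D^l\Gamma_l w_n}$, and since $D^{2l}\Gamma_l w_n=w_n$, the function $D^l\Gamma_l w_n$ lies in the genuine polynomial spline space $S^{(l-1)}_{k+l-1}(I_h)$, so the standard stability property of splines gives $\norm{w_n}\leq C_{\text{inv}}n^l\norm{D^l\Gamma_l w_n}\leq Cn^l\normF{Aw_n}$. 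This duality reduction to the model operator $D^l$ entirely avoids what you propose instead, namely a direct Markov-type inverse inequality on the Chebyshevian spline space $AE_n$. Your route is plausible in principle (for small $h$ the local solution spaces of $L_lv=p$, $p\in\Pi_{k-1}$, are perturbations of $\Pi_{k+l-1}$, and the finitely many remaining $n$ are harmless by finite dimensionality), but it is exactly the step you leave as a sketch: you would need uniformity of the perturbed Markov constant over all subintervals and all $n$, which you assert rather than prove. That this single estimate carries the real weight of the theorem is underlined by the paper itself, which lists the extension of the VH85 lemma beyond $q=r=2$ as an open problem; if you want a complete proof rather than a program, the duality-lemma reduction is the safer --- and the paper's --- path.
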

In Theorems \ref{conv_leastsquares_apriori_gen}, \ref{conv_leastsquares_dp} we needed instead of condition (\ref{bdLS-H}) the conditions (\ref{kappa*appr0}), (\ref{kappa2*appr0}) corresponding to the special case $l=1$ in (\ref{bdLS-H}).
If $\mathcal{R}(A^*)=\mathcal{R}(A^*A)^{\frac12}\subset W^{l,2}$, then 
$\mathcal{R}((A^*A)^{\frac{1}{2l}}) \subset W^{1,2}$ and in case 
$E_n=S^{(-1)}_{k-1}$,  
$\kappa_n \leq Cn^l$ condition (\ref{bdLS-H}) is satisfied for all $k \in \N$, but (\ref{kappa*appr0}), (\ref{kappa2*appr0}) require $k \geq l$ in Theorem \ref{convGP-LS}.\\ 
We list below several open problems:
\begin{enumerate}
\item[(1)] Is it possible to weaken the assumption $k \geq l$?
\item[(2)] Is it possible to extend the results of Theorem \ref{convGP-LS} using  a more general operator $L_l$ instead of the operator $D^l$, as in Theorem \ref{Hilbert-LS-Green}?
\end{enumerate}
Concerning (1), computational results for the collocation method indicate that $k=l$ is really needed there.
Note that (2) can be reduced to the (also open) question, whether the following lemma, proved in \cite{VH85} for the case $q=r=2$, remains valid for general $q,r\in[1,\infty]$. 

\begin{lemma}
Let 
$B \in L(L^q,L^r), \quad  W_0^{l,r}(0,1) \subset B(L^q(0,1)) \subset W^{l,r}(0,1)$,
where $W_0^{l,r}(0,1)=\{z\in W^{l,r}(0,1),z^{(j)}(0)=z^{(j)}(1)=0,j=0,\dots,l-1\}$, $L^q=L^q(0,1)$, $1<q<\infty$, $1<r<\infty$.
Then 
$\|B^*v\|_{L^{q^*}} \geq C_1 \|D^{(-l )}v\|_{L^{r^*}}$, $\forall v \in L^{r^*}$, $q^*=q/(q-1)$, $r^*=r/(r-1)$,    
where $D^{(-l )}v=D^l\Gamma_lv$, $\Gamma_l:L^{r^*} \to W_0^{l,r^*}$ is the inverse to the differential operator $D^{2l}$ for the boundary conditions $z^{(j)}(0)=z^{(j)}(1)=0, j=0,1,\dots,l-1$.
\end{lemma}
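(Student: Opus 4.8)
The plan is to combine a duality argument with the open mapping theorem and repeated integration by parts, the latter turning the estimate into a statement about distances to the polynomial space $\Pi_{l-1}$. First note that since $B:L^q\to L^r$ is bounded and $B(L^q)\subset W^{l,r}(0,1)$, the closed graph theorem upgrades $B$ to a bounded operator $B:L^q\to W^{l,r}$. Hence $M:=B^{-1}(W_0^{l,r}(0,1))$ is a closed subspace of $L^q$ (preimage of the closed subspace $W_0^{l,r}$ under a continuous map), and $B|_M:M\to W_0^{l,r}$ is a bounded surjection, surjectivity being exactly the hypothesis $W_0^{l,r}\subset B(L^q)$. By the open mapping theorem $B|_M$ is open, so every $g\in W_0^{l,r}$ admits a preimage $u\in L^q$ with $Bu=g$ and $\|u\|_{L^q}\le C\|g\|_{W^{l,r}}$. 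Writing $\|B^*v\|_{L^{q^*}}=\sup_{\|u\|_{L^q}\le1}\langle v,Bu\rangle$ and restricting $u$ to such preimages gives
\[
\|B^*v\|_{L^{q^*}}\ \ge\ \frac1C\,\sup_{0\ne g\in W_0^{l,r}}\frac{|\langle v,g\rangle|}{\|g\|_{W^{l,r}}}.
\]

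Next I would rewrite the numerator. Since $w:=\Gamma_l v$ satisfies $D^{2l}w=v$ together with $w^{(j)}(0)=w^{(j)}(1)=0$ for $j=0,\dots,l-1$, and since $g\in W_0^{l,r}$ has exactly the same vanishing traces, integrating by parts $l$ times annihilates all boundary terms and yields $\langle v,g\rangle=(-1)^l\langle D^{(-l)}v,D^lg\rangle$, with $D^{(-l)}v=D^lw$. As $g$ runs over $W_0^{l,r}$, the function $\psi:=D^lg$ runs exactly over $S:=\{\psi\in L^r:\int_0^1 p\,\psi=0\ \forall p\in\Pi_{l-1}\}$, since the $l$ vanishing traces at $t=1$ translate precisely into these moment conditions; moreover the Poincaré-type inequality for functions with vanishing boundary derivatives gives $\|g\|_{W^{l,r}}\asymp\|D^lg\|_{L^r}$. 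Therefore the supremum above is comparable to the norm of the functional $D^{(-l)}v$ restricted to $S$, which by the dual description of the distance to a closed subspace (using reflexivity of $L^r$ and $S^{\perp}=\Pi_{l-1}$) equals $\mathrm{dist}_{L^{r^*}}(D^{(-l)}v,\Pi_{l-1})$.

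The remaining and, I expect, genuinely nontrivial step is to replace this distance by the full norm $\|D^{(-l)}v\|_{L^{r^*}}$; this is exactly the point where the Hilbert-space proof of \cite{VH85} (in which orthogonality makes distance equal to norm for free) cannot be copied. A second integration by parts shows that $D^{(-l)}v$ annihilates $\Pi_{l-1}$: moving all $l$ derivatives onto $p\in\Pi_{l-1}$ makes the interior term vanish because $D^lp=0$, while the boundary terms $D^{l-1-j}w(0),D^{l-1-j}w(1)$ all vanish by the boundary conditions on $w$; hence $\langle D^{(-l)}v,p\rangle=0$ for every $p\in\Pi_{l-1}$. Thus $D^{(-l)}v$ lies in the annihilator $N:=\{\chi\in L^{r^*}:\int_0^1\chi p=0\ \forall p\in\Pi_{l-1}\}$, and since $\Pi_{l-1}\subset L^{r^*}$ is finite dimensional with $N\cap\Pi_{l-1}=\{0\}$ and $\mathrm{codim}\,N=\dim\Pi_{l-1}=l$, one has a topological direct sum $L^{r^*}=N\oplus\Pi_{l-1}$ with a bounded projection $P_N$ onto $N$ along $\Pi_{l-1}$. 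For $\phi\in N$ and any $p\in\Pi_{l-1}$ this yields $\|\phi-p\|_{L^{r^*}}\ge\|P_N\|^{-1}\|P_N(\phi-p)\|_{L^{r^*}}=\|P_N\|^{-1}\|\phi\|_{L^{r^*}}$, so $\mathrm{dist}_{L^{r^*}}(D^{(-l)}v,\Pi_{l-1})\ge\|P_N\|^{-1}\|D^{(-l)}v\|_{L^{r^*}}$.

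Chaining the three steps gives the claim with $C_1=(C\|P_N\|)^{-1}$, where $\|P_N\|$ depends only on $l$ and $r$; in the Hilbert case $q=r=2$ the projection is orthogonal, $\|P_N\|=1$, and one recovers \cite{VH85}. The only places that genuinely use $1<q,r<\infty$ are the reflexivity needed for the duality identifications in step two and the boundedness of the oblique projection $P_N$ in step three, so the endpoint cases $q,r\in\{1,\infty\}$ would require a separate treatment; this is consistent with the lemma being stated for $q,r\in(1,\infty)$ and the general $[1,\infty]$ question being left open.
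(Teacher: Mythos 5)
You should know at the outset that the paper contains \emph{no proof} of this lemma: it is stated explicitly as an open question. The authors write that the lemma was proved in \cite{VH85} for the case $q=r=2$ and that whether it ``remains valid for general $q,r\in[1,\infty]$'' is open; the lemma is displayed as the precise formulation of that question, not as an established result. So there is no paper proof to compare your attempt against, and it has to be judged on its own merits.

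Having checked it step by step, I cannot find a gap in the stated range $1<q,r<\infty$. The closed graph theorem does upgrade $B$ to a bounded map $L^q\to W^{l,r}$; $M=B^{-1}(W_0^{l,r})$ is then closed and $B|_M$ maps onto $W_0^{l,r}$ by hypothesis, so the open mapping theorem yields preimages with $\|u\|_{L^q}\le C\|g\|_{W^{l,r}}$ and hence your first inequality. Both integration-by-parts computations are sound: every boundary term involves either $g^{(j)}$ or $(\Gamma_l v)^{(j)}$ with $0\le j\le l-1$, which vanish by the respective boundary conditions, giving both $\langle v,g\rangle=(-1)^l\langle D^{(-l)}v,D^lg\rangle$ and $\langle D^{(-l)}v,p\rangle=0$ for all $p\in\Pi_{l-1}$. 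The identification $\{D^lg : g\in W_0^{l,r}\}=S$ is correct in both directions (for surjectivity the $l$-fold antiderivative vanishing at $0$ works, since $g^{(j)}(1)$ is a multiple of $\int_0^1(1-s)^{l-1-j}\psi(s)\,ds$, which the moment conditions kill), the Poincar\'e equivalence on $W_0^{l,r}$ is standard, and $S^\perp=\Pi_{l-1}$ holds because $\Pi_{l-1}$ is finite dimensional, hence weak-$*$ closed. Your key move --- replacing the ``distance to $\Pi_{l-1}$ equals norm'' step, which in \cite{VH85} is free by orthogonality, with a bounded oblique projection $P_N$ onto the annihilator $N$ of $\Pi_{l-1}$ along $\Pi_{l-1}$ --- is exactly right, and it works precisely because the complement is finite dimensional: the complementary part of $P_N$ is $\chi\mapsto\sum_j c_j(\chi)p_j$ with coefficients determined by the invertible Gram matrix $\bigl(\int_0^1 p_j(t)t^m\,dt\bigr)$, so $\|P_N\|$ depends only on $l$ and $r$. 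Two bookkeeping points: your final constant should also carry the Poincar\'e constant, $C_1=(C\,C_P\,\|P_N\|)^{-1}$; and you should note that $D^lg=0$ with $g\in W_0^{l,r}$ forces $g=0$ (a polynomial of degree $\le l-1$ with an $l$-fold zero), so dividing by $\|D^lg\|_{L^r}$ is legitimate.

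Since, if correct, this settles the reflexive part of a problem the authors declare open, it deserves independent scrutiny beyond my check --- but every ingredient is standard and I see no flaw. What your argument does not deliver, as you yourself observe, are the endpoint cases $q,r\in\{1,\infty\}$: there the duality identification $(L^\infty)^*\ne L^1$ and the distance formula for restricted functionals genuinely break down, and those are exactly the cases the paper's applications (e.g., $E=L^1$ or $\mathcal{M}(\Omega)$, $F=C[0,1]$, as in its numerical section) would need. So the paper's question as posed, over $[1,\infty]$, remains open even granting your proof.
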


In the next section we consider the collocation method as a special case of the general projection method, applying Theorem \ref{convGP-LS} to a Volterra integral equation of the first kind and estimating $\tau$. Note that in \cite{HAG02} a collocation method for integral equations of the first kind is considered using kernel functions for basis functions, the number of which was determined by the monotone error rule.

\subsection{On the collocation method for a Volterra integral equation}

We consider a Volterra integral equation of the first kind 
\be{Volterra}
(A u)(t):= \int_0^t K(t,s) u(s)\,ds =f(t),\ t\in [0,1] 
\ee
with the operator
$ A\in L(L^p(0,1),C[0,1]),\  1\leq p\leq \infty$. A special case of equation (\ref{Volterra}) is the model problem 
\be{Volt-conv}
(A u)(t):= \int_0^t(t-s)^{l-1} u(s)\,ds =f(t),\ t\in [0,1].
\ee 
In the collocation method we find $u_n \in E_n=S^{(-1)}_{k-1}(I_h)$ such that
\[
Au_n(t_{i,j})=\fdel(t_{i,j}), \quad i=1,...,n, \ j=1,...,k
\]
where $t_{i,j}= (i-1+c_j)h\in [0,1]$, $i=1,...,n$, $j=1,...,k$ are collocation nodes and  $0<c_1<...<c_k\leq 1$ are collocation parameters whose choice is essential.

In \cite{Brunner},  spline collocation is considered in case $\delta=0$, $E=L^\infty$, $F=C$,  $|K(t, t)| > 0$ (case $l=1$ in \eqref{Volt-conv}). Theor. 2.4.2  in \cite{Brunner} (page 123) proves that convergence holds if and only if 
\[\prod_{j=1}^{k} (1-c_j)/c_j <1 \,.\]
In \cite{Eggerm83} the case $K(t, t) = 0$
is considered, where $\frac{\partial K(t,s)}{\partial t}|_{t=s}\not=0$
(case $l=2$ in (\ref{Volt-conv})) and convergence if  $c_k=1$ and $\prod_{j=1}^{k-1} (1-c_j)/c_j <1$ is proven. Convergence of the collocation method for equation \eqref{Volt-conv} in case $l>2$ seems to be an open problem.

In our numerical experiments below we will use the discrepancy principle for the choice of a  proper number $n=n(\delta)$ of the subintervals, thus we use the first $n$ such that  $\normF{Au_n-f^{\delta}} \leq b \delta$. According to Theorem \ref{conv_projmeth_dp} we need that $b>\tau +1$, so the value of $\tau$ in \eqref{tau} is needed. For the use of 
inequality $\tau_n\leq \tau$ for all $n\in\N$  
we need to estimate
\[ \tau_n = \sup_{w_n\in E_n} \frac{\| Aw_n \|}{\sup_{z_n\in Z_n ,\normFd{z_n}=1}\dupF{z_n,Aw_n}} 
= \sup_{w_n\in E_n} \frac{\sup_{t\in [0,1]} |Aw_n(t)|}{\sup_{i,j} |Aw_n(t_{i,j})|}.
\] 
In the numerical experiments of the next section we solve equation \eqref{Volt-conv} with $l=2$. We use linear splines $k=2$ and collocation nodes $t_{i1}=(i-1)h+ch$ with $c \in (0.5,1)$ and $t_{i2}=ih$.
It can be shown that $\tau=\tau(c)$  depends on $c$ in the form 
\be{tau(c)}
\tau(c) =1+\frac{4(y^2-y+1)^{3/2}-4y^3+6y^2+6y-4}{27y^2(2c-1)(1-c)},\quad y=c(-2c^3+c^2+1).
\ee 
Actually, it is sufficient to consider cubic functions $z(t)$ on the interval $[0,1]$ 
which satisfy $z(0)=z(c)=1$, $z(1)=-1, z'(0)=2/(c(1-c)(2c-1))$. 
The last equality is the bound on the derivative of the cubic spline $Aw_n$ at the points $ih$ under conditions $|Aw_n(t_{i,j})|\leq 1$ if $n \to \infty$.
The value of $\tau(c)$ in (\ref{tau(c)}) is the maximum of $z(t)$. 

\subsection{Numerical example}

We consider equation (\ref{Volt-conv}) with the exact solutions $\usol(s)=s^r, r \in \{1/2, 3/2\}$, where the exact right hand side is computed as $f(t)= (Au)(t)$. 
The noisy data were generated by the formula $f_{\delta}(t_{i,j})=f(t_{i,j})+\delta \theta_{i,j}$, where $\delta=10^{-m}, m \in \{2,...,7\}$ and $\theta_{i,j}$ are random numbers with normal distribution, normed after being generated:
 $\max_{i,j} |\theta_{i,j}|=1.$ In the space setting we used $p=1$, i.e., we consider $A$ as an operator from $L^1(0,1)$ to $C[0,1]$.

In our numerical experiment we took $k=2$ (linear splines) and used collocation nodes        $t_{i1}=(i-1)h+ch$ with $c \in (0.5,1)$ and $t_{i2}=ih$.
 Table \ref{tab:numerics_apost} contains the results for
$c \in \{0.6,\, 0.7,\, 0.8, 0.9\}$; according to  formula \eqref{tau(c)}, the corresponding values of $\tau(c)$ are 5.67,  4.10,  4.22 and 6.51 respectively. For fulfilling the theoretical requirement \eqref{tau} in Theorem \ref{conv_projmeth_dp} we actually used $b(c)=1.01+\tau(c)$ in the discrepancy principle. The discrepancy principle gave a number $n_D$ of subintervals with corresponding error $e_D=\|u_{n_D} -\usol\|$.
We also found the optimal number $n_{opt}$ of subintervals and the corresponding error 
$e_{opt}=\min_{n \in \N} \normE{u_n -\usol}= \normE{u_{n_{opt}} -\usol}$, as well as the best coefficient $b=b_{opt}$ for the choice of $n=n(\delta)$ in the discrepancy principle according to $b_{opt}=\normF{Au_{n_{opt}}-f_\delta}/\delta$. 

Table \ref{tab:numerics_apost} contains our results for the exact solutions $\usol(s)=s^r$ with $r=1/2$ (left) and $r=3/2$ (right). Columns $r_b$ and $r_e$ contain the ratios of the $b$-values $r_b=b(c)/b_{opt}$ and the corresponding errors $r_e=e_D/e_{opt}$. 
The performance of the discrepancy principle is determined by the constant $b$.
According to column $r_b$, the lowest values of constants $b=b(c)$, needed by the assumptions of Theorem \ref{conv_projmeth_dp}, are typically 1.5 to 3 times larger than the optimal values $b_{opt}$. Nevertheless, column $r_e$ shows that the errors $e_D$ of the approximate solutions with choice of the dimension by the discrepancy principle were typically not larger than 1 to 1.4 times the optimal errors $e_{opt}$.  Comparison of the errors $e_D$ for different $c$-values suggests to use medium $c$-values 0.7 or 0.8.  
\begin{table}\caption{Results for optimal $n$ and for $n$ according to the discrepancy principle}
\label{tab:numerics_apost}
\medskip
\begin{tabular}{ |c|c||c|c|c|c|c||c|c|c|c|c|c|} 
 \hline
&&
\multicolumn{5}{|c||}{$\usol(s)=s^{1/2}$}&
\multicolumn{5}{|c|}{$\usol(s)=s^{3/2}$}
\\
\hline
 c & $\delta$ &  $n_{opt}$ & $n_D$ & $r_b$ & $e_D$  & $r_e$ &  $n_{opt}$ & $n_D$ & $r_b$ & $e_D$  & $r_e$ \\
 \hline
0.6	    & 1.E-02	&	1		&	1		& 3.1	& 0.325		& 1.00		& 1     &    1 &   2.5  & 0.502  & 1.00\\
0.6		& 1.E-03	&	2		&	1		& 2.5	& 0.289		& 1.90		& 2     &    2 &   2.2  & 0.180  & 1.00\\
0.6		& 1.E-04	&	6		&	4		& 2.7	& 0.079		& 1.17		& 6     &    3 &   3.0  & 0.092  & 2.12\\
0.6		& 1.E-05	&	12		&	7		& 2.7	& 0.040		& 1.39	    &10     &    6 &   2.7  & 0.032  & 2.03\\
0.6		& 1.E-06	&	24		&	18		& 2.2	& 0.012		& 1.14		&20     &   11 &   3.3  & 0.011  & 2.46\\
0.6		& 1.E-07	&	48		&	42		& 1.4	& 0.004		& 1.02		&34     &   22 &   2.8  & 0.003  & 1.91\\
\hline
0.7		& 1.E-02	&	1		&	1	    & 2.3	& 0.336		& 1.00		& 1     &    1 &   2.2  & 0.516  & 1.00\\
0.7		& 1.E-03	&	2		&	2		& 1.8	& 0.145 	& 1.00		& 2     &    2 &   1.7  & 0.169  & 1.00\\
0.7		& 1.E-04	&	6		&	4		& 2.1	& 0.065		& 1.14		& 6     &    4 &   2.2  & 0.054  & 1.35\\
0.7		& 1.E-05	&	12		&	8		& 2.2	& 0.025		& 1.09		&10     &    6 &   2.1  & 0.022  & 1.69\\
0.7		& 1.E-06	&	24		&	20		& 1.8	& 0.008		& 1.01		&20     &   12 &   2.5  & 0.006  & 1.56\\
0.7		& 1.E-07	&	42		&	46		& 0.9	& 0.003		& 1.01		&30     &   22 &   2.0  & 0.002  & 1.54\\
\hline
0.8		& 1.E-02	&	1		&	1		& 2.0	& 0.358		& 1.00		& 1     &    1 &   2.2  & 0.534  & 1.00\\
0.8		& 1.E-03	&	2		&	2		& 1.7	& 0.148		& 1.00		& 2     &    2 &   1.6  & 0.164  & 1.00\\
0.8		& 1.E-04	&	6		&	4		& 1.9	& 0.063		& 1.00		& 6     &    4 &   2.0  & 0.050  & 1.13\\
0.8		& 1.E-05	&	8		&	8		& 1.1	& 0.023		& 1.00	    & 8     &    6 &   1.8  & 0.019  & 1.41\\
0.8		& 1.E-06	&	20		&	20		& 1.1	& 0.008		& 1.00		&15     &   11 &   2.1  & 0.006  & 1.61\\
0.8		& 1.E-07	&	38		&	50		& 0.6	& 0.003		& 1.17		&30     &   22 &   1.9  & 0.002  & 1.33\\
\hline
0.9		& 1.E-02	&	1		&	1		& 2.1	& 0.444		& 1.00		& 1     &    1 &   2.3  & 0.600  & 1.00\\
0.9		& 1.E-03	&	2		&	2		& 2.0	& 0.175		& 1.00		& 2     &    2 &   1.8  & 0.169  & 1.00\\
0.9		& 1.E-04	&	4		&	4		& 1.5	& 0.075		& 1.00		& 4     &    4 &   1.4  & 0.059  & 1.00\\
0.9		& 1.E-05	&	8		&	8		& 1.3	& 0.025		& 1.00	    & 8     &    6 &   2.0  & 0.019  & 1.21\\
0.9		& 1.E-06	&	15		&	18		& 0.7	& 0.009		& 1.00		&15     &   11 &   2.3  & 0.006  & 1.38\\
0.9		& 1.E-07	&	32		&	46		& 0.4	& 0.004		& 1.17		&26     &   20 &   1.9  & 0.002  & 1.25\\
\hline
\end{tabular}
\end{table}
\section{Conclusions and Remarks}
In this paper we have extended some results on regularization by projection in Hilbert spaces to a more general Banach space setting. Besides being applicable in case of ``nice'' reflexive Banach spaces like $L^p$ with $p\in(1,\infty)$, some of our results also give new insights concerning certain cases of nonreflexive Banach spaces like $L^\infty,L^1,C,\mathcal{M}$ which are currently of high interest for several applications.
Analytical considerations and numerical results are provided for a Volterra integral equation in one  dimension space, using a spline discretization.
 
Future work in this context will be devoted to proving convergence rates, particularly also in nonreflexive spaces, and to more general applications in higher  dimension spaces.

\section*{Acknowledgment}
The first and third author are supported 
by the Estonian Science Foundation Grant 9120 and by institutional research funding IUT20-57
of the Estonian Ministry of Education and Research. 
The second and fourth author are supported by the Karl Popper Kolleg ``Modeling-Simulation-Optimization'' funded by the Alpen-Adria-Universit\"at Klagenfurt and by the Carinthian Economic Promotion Fund (KWF).

We thank also Reimo Palm from the University of Tartu for  numerical tests and the three referees for the careful reading of the manuscript and for the valuable comments.

\end{document}